\documentclass{amsart}
\usepackage{mathrsfs}

\usepackage{soul}
\usepackage{ulem}

\usepackage{lineno}

\usepackage{hyperref}

\usepackage{dsfont}
\usepackage{here}
\usepackage{amsmath}
\usepackage{verbatim}
\usepackage{tikz-cd} 

\usepackage{graphicx, psfrag, epstopdf}

\usepackage{mathrsfs}

\usepackage{amssymb}
\usepackage{graphicx}

\usepackage[all,cmtip]{xy}

\usepackage{amsfonts}

\usepackage{soul}

\usepackage{color}
\usepackage{yfonts}

\usepackage{paralist}

\usepackage{stmaryrd}

\usepackage{amsxtra}

\usepackage{enumitem}
\usepackage{mathtools}

\def\cU{\mathcal U}
\def\cV{\mathcal V}

\def\cA{\mathcal A}

\def\cI{\mathcal I}

\newcommand{\norm}[1]{\| #1\|}

\def\cJ{          \mathcal J}

\def\cA{          \mathcal A}
\def\cB{          \mathcal B}
\def\cC{          \mathcal C}
\def\cD{          \mathcal D}
\def\cF{          \mathcal F}
\def\cG{          \mathcal G}

\def\cN{          \mathcal N}

\def\Eu{           E^{u}}
\def\Es{           E^{s}}
\def\Ec{           E^{c}}
\def\Ecs{           E^{cs}}
\def\Ecu{           E^{cu}}
\def\cW{          \mathcal W}
\def\cWu{          \mathcal W^{u}}
\def\cWs{          \mathcal W^{s}}
\def\cWc{          \mathcal W^{c}}
\def\cWcu{          \mathcal W^{cu}}
\def\cWcs{          \mathcal W^{cs}}
\def\cPH{          \mathcal{PH}}

\def\lc{  \lceil}
\def\rc{  \rceil}

\let\cal\mathcal

\def \R{{\mathbb R}}

\def \Z{{\mathbb Z}}
\def \N{{\mathbb N}}

\newcommand{\ary}{\begin{eqnarray}}
\newcommand{\eary}{\end{eqnarray}}

\newcommand{\aryst}{\begin{eqnarray*}}
\newcommand{\earyst}{\end{eqnarray*}}

\newcommand{\enmt}{\begin{enumerate}}
\newcommand{\eenmt}{\end{enumerate}}

\newcommand{\T}{{\mathbb T}}
\newcommand{\prf}{{\begin{proof}}}
\newcommand{\epf}{{\end{proof}}}

\newcommand{\cP}{{\mathcal P}}

\DeclareMathOperator{\diff}{Diff}

\newtheorem{theo}{\sc Theorem}[section]
\newtheorem{prop}[theo]{\sc Proposition}

\newtheorem{lemma}[theo]{\sc lemma}

\newtheorem{cor}[theo]{\sc Corollary}
\newtheorem{claim}[theo]{\sc Claim}

\newtheorem{conj}[theo]{\sc Conjecture}

\theoremstyle{definition}

\def\bee{\begin{equation}}
\def\eee{\end{equation}}

\newtheorem{defi}[theo]{\sc Definition}
\newtheorem{construction}[theo]{\sc Construction} 
\newtheorem{defi-prop}[theo]{\sc Definition-Proposition}

\newtheorem*{convention}{\sc Convention}
\newtheorem{nota}[theo]{\sc Notation}

\theoremstyle{rema}

\newtheorem{rema}[theo]{\sc Remark}

\newtheorem*{questionA'}{\sc Question A'}

\newtheorem*{questionB'}{\sc Question B'}


\newtheorem{thm}{\sc Theorem}




\numberwithin{equation}{section}

\newcommand{\Diff}{\text{Diff}}

\newcommand{\pdvr}[2]
{\dfrac{\partial^{#2} #1}{\partial \theta^{#2_1} \partial r^{#2_2}}}
%

\newcommand{\pdvrs}[2]
{\partial^{#2} #1 /\partial \theta^{#2_1} \partial r^{#2_2}}
%




\begin{document}

\title[$C^{\lowercase{r}}$-prevalence of stable ergodicity]{$C^{r}$-prevalence of stable ergodicity for a class of partially hyperbolic systems}

\author[Martin Leguil]{Martin Leguil$^*$}
\thanks{$^*$M.L. was supported by the ERC project of Sylvain Crovisier, the NSERC Discovery Grant,  reference number 502617-2017, of Jacopo De Simoi,  l'allocation doctorale AMX, and by the Brazilian-French Network in Mathematics}
\address{Martin Leguil, Department of Mathematics, University of Toronto, 40 St George St., Toronto, ON, Canada M5S 2E4 / 
	CNRS-Laboratoire de Math\'ematiques d’Orsay, UMR 8628, Universit\'e Paris-Sud 11, Orsay Cedex 91405, France}
\email{martin.leguil@utoronto.ca}
\email{martin.leguil@math.u-psud.fr}

\author[Zhiyuan Zhang]{Zhiyuan Zhang$^\dagger$}
\address{Zhiyuan Zhang, CNRS, Institut Galil\'ee
	Université Paris 13
	99, avenue Jean-Baptiste Cl\'ement
	93430 - Villetaneuse}
\thanks{$^\dagger$Z.Z. was supported by the NSF under Grant No. DMS-1638352  and l'allocation doctorale de l'ENS}
\email{zhiyuan.zhang@math.univ-paris13.fr}


\maketitle

\begin{abstract}
We prove that for $r \in \N_{\geq 2} \cup \{\infty\}$, for any dynamically coherent, center bunched and strongly pinched volume preserving $C^r$ partially hyperbolic  diffeomorphism $f \colon X \to X$,  if either (1) its center foliation is uniformly compact, or (2) its center-stable and center-unstable foliations are of class $C^1$,  then there exists a $C^1$-open neighbourhood of $f$ in $\diff^r(X,\mathrm{Vol})$, in which stable ergodicity is $C^r$-prevalent in Kolmogorov's sense. In particular, we verify Pugh-Shub's stable ergodicity conjecture in this region. This also provides the first result that verifies the prevalence of stable ergodicity in the measure-theoretical sense. Our theorem applies to a large class of algebraic systems. As applications, we give affirmative answers in the strongly pinched region to: 1. an open question of Pugh-Shub in \cite{PS}; 2. a generic version of an open question of  Hirsch-Pugh-Shub in \cite{HPS}; 
and 3. a generic version of an open question of Pugh-Shub in \cite{HPS}. 
\end{abstract}

\addtocontents{toc}{\protect\setcounter{tocdepth}{1}   }
 
\tableofcontents

\section{Introduction}

Smooth ergodic theory, that is, the study of statistical and geometric properties of measures invariant under a smooth transformation or flow, is a much studied subject in the modern dynamical systems. It has its root in Boltzmann's Ergodic Hypothesis in the study of gas particles back in the 19$^{th}$ century. Ever since Birkhoff's proof of his ergodic theorem, there has been a constant interest in understanding the genericity of ergodic systems. The pioneering work of A. Kolmogorov in the 1950's provided the first obstruction to the genericity of ergodicity for Hamiltonian systems. His idea was later developed into what is now known as the Kolmogorov-Arnold-Moser (KAM) theory. On the other hand, the work of E. Hopf, D. Anosov and Y. Sinai provided open sets of ergodic systems, known as Anosov systems, or uniformly hyperbolic systems.

\begin{defi}[Anosov diffeomorphisms]\label{defanosov}
Given a compact Riemannian manifold $X$, a diffeomorphism $f\in \mathrm{Diff}^1(X)$ is called \textit{uniformly hyperbolic} or \textit{Anosov} if there exists a continuous splitting $TX = \Es_f \oplus \Eu_f$ of the tangent bundle into $Df$-invariant subbundles
and constants $\bar{\chi}^{u}, \bar{\chi}^{s} > 0$ such that for any $x \in X$,  we have
\begin{alignat}{3}
\norm{Df(v_1)} &< e^{-\bar{\chi}^{s}}\|v_1\|, &\quad &\forall\, v_1 \in \Es_f(x)\backslash\{0\}, \label{anosovexponent1} \\
\norm{Df(v_2)} &> e^{\bar{\chi}^{u}}\|v_2\|, &\quad &\forall\, v_2 \in \Eu_f(x)\backslash\{0\}. \label{anosovexponent2}
\end{alignat}
\end{defi}
For the next nearly thirty years after Anosov-Sinai's work, uniformly hyperbolic systems remained the only examples where ergodicity was known to appear robustly, a property which is also called ``stable ergodicity": we say that a $C^2$ volume preserving diffeomorphism $f$ is stably ergodic if any volume preserving $g$ sufficiently close to $f$ in the $C^2$ topology is also ergodic. A breakthrough came when M. Grayson, C. Pugh and M. Shub \cite{GPS} gave the first non-hyperbolic example of a stably ergodic system, i.e., the time-one map of the geodesic flow on the unit tangent bundle of a surface of constant negative curvature. Such systems are special cases of partially hyperbolic systems, which are defined as follows.

\begin{defi}[Partially hyperbolic diffeomorphisms]\label{def partially hyperbolic}
Given a smooth Riemannian manifold $X$, a $C^1$ diffeomorphism $f\colon X \to X$ is called \textit{partially hyperbolic} if its $C^1$ norm is uniformly bounded and there exist a nontrivial continuous splitting of the tangent bundle into $Df$-invariant subbundles, $TX = \Es_f \oplus \Ec_f \oplus \Eu_f$, and continuous functions $\bar{\chi}^{u},  \bar{\chi}^{s}\colon X \to \R_{>0}$, $\bar{\chi}^{c}, \hat{\chi}^{c}\colon X \to \R$,  such that 
\begin{equation} \label{ph ineq 1}
 -\bar{\chi}^{s} <\bar{\chi}^{c }\leq \hat{\chi}^{c} < \bar{\chi}^{u},
\end{equation}
and for any $x\in X$, any $v_1 \in \Es_f(x) \backslash \{0\}$, $v_2 \in \Ec_f(x) \backslash \{0\}$, $v_3 \in \Eu_f(x) \backslash \{0\}$, we have
\begin{align} 
&\norm{D_x f(v_1)} < e^{-\bar{\chi}^{s}(x)}\norm{v_1}, \label{ph ineq 2}\\
e^{\bar{\chi}^c(x)}\norm{v_2} <\ &\norm{D_x f(v_2)}< e^{\hat{\chi}^{c}(x)}\norm{v_2}, \label{ph ineq 3}\\
e^{\bar{\chi}^{u}(x)}\norm{v_3} <\ &\norm{D_x f(v_3)}. \label{ph ineq 4}
\end{align}
We set $E^{cs}_f:=E^c_f \oplus E^s_f$ and $E^{cu}_f:=E^c_f \oplus E^u_f$. 
\end{defi}

Partially hyperbolic systems have served as the principal source for finding stably ergodic systems (for other examples, see \cite{BGP,BPP,T}). They also appear in the study of SRB measures, statistical mechanics, rigidity theory and homogeneous dynamics. Based on \cite{GPS} and other results, Pugh and Shub formulated the following fundamental conjecture: 
%

\begin{conj}[Pugh-Shub's Stable Ergodicity Conjecture, \cite{PS}]\label{stableergodicityconjecture}
Stable ergodicity is $C^r$-dense among the set of $C^r$ volume preserving partially hyperbolic diffeomorphisms on a compact connected manifold, for any integer $r\geq 2$.
\end{conj}

Since its introduction, this conjecture and related questions on stable ergodicity have been extensively studied, for instance in the following series of works \cite{ PS2, BW2, DW, RH, RHRHU, RHRHTU, BW, ABW, ACW}. We will later elaborate on the connections between them in Subsection \ref{subs sta erg}. We mention that Conjecture \ref{stableergodicityconjecture} is far from being solved: results directly related to Conjecture \ref{stableergodicityconjecture} are only known for $\dim E^c_f=1$.\footnote{The $C^1$ version of Conjecture \ref{stableergodicityconjecture} is proved in \cite{ACW}.} \\ 

Our main result (Theorem \ref{main thm 2}), will be given  in Section \ref{section main result}; as we will see,  we actually obtained 
 prevalence in Kolmogorov's sense, a notion which is much stronger  than density. 
In the next section, we will state Theorem \ref{main thm 2.1}, a corollary of Theorem \ref{main thm 2},  to help the reader understand the main features of our result.  

\subsection{Stable ergodicity and accessibility}\label{subs sta erg}

In \cite{PS2}, the authors proposed a route to prove the Stable Ergodicity Conjecture. They divided the conjecture into two parts, using a geometric notion originating in an argument due to E. Hopf \cite{Hopf}. 

Let $f$ be $C^r$ partially hyperbolic diffeomorphism of a smooth compact Riemannian manifold $X$, $r \in \N_{\geq 1}\cup \{\infty\}$. It is well-known (see \cite{HPS}) that $E^s$ and $E^u$ uniquely integrate to continuous foliations $\cW_f^s$ and $\cW_f^u$ respectively, called the \textit{stable} and \textit{unstable} foliations. For any $x \in X$ and $* = s,u$,  the leaf of $\cW_f^*$ through $x$, denoted by  $\cW_f^*(x)$,   is an immersed $C^{r}$-manifold, and  $f(\cW_f^*(x))=\cW_f^*(f(x))$. If $f \in \cPH^2(X)$, the transverse regularity of $\cW_f^s$ and $\cW_f^u$ 
is H\"older (see \cite{PSW}).

\begin{defi}[Accessibility]\label{def acc}
An \textit{$su$-path} of $f$ is a path obtained by concatenating finitely many subpaths, 
each of which lies entirely in a single leaf of $\cWs_f$ or $\cWu_f$. The map $f$ is said to be \textit{accessible} if any two points in $X$ can be connected by some $su$-path.  We say that $f$ is ($C^1$-)\textit{stably accessible} if there exists $\cU$, a $C^1$-open neighbourhood of $f$, such that any $g \in \cU$ is accessible.
\end{defi}

\begin{conj}[Accessibility implies ergodicity]\label{Accessibility implies ergodicity}
Essential accessibility implies ergodicity among $C^2$ volume preserving partially hyperbolic diffeomorphisms.
\end{conj}
Recall that \textit{essential accessibility} is a weakening of the notion of accessibility: it means that for 
any two measurable sets $A,B$ of positive volume, there exist $a\in A$ and $b\in B$ which can be connected by some $su$-path.
\begin{conj}[Density of accessibility]\label{Density of accessibility}
For any integer $r \in \mathbb{N}_{\geq 2}\cup \{\infty\}$, stable accessibility is open and dense among $C^r$ partially hyperbolic diffeomorphisms, volume preserving or not.
\end{conj}

The state of the art on Conjecture \ref{Accessibility implies ergodicity} is the result of K. Burns and A. Wilkinson \cite{BW}. They proved Conjecture \ref{Accessibility implies ergodicity} under one mild technical condition called \textit{center bunching}, which asserts, in rather loose terms, that the hyperbolic part dominates  nonconformality of the center.
Their result improved earlier work of Pugh-Shub in \cite{PS2}, which required two technical conditions: 
\textit{dynamical coherence} and a stronger form of center bunching. Dynamical coherence is a very commonly used notion, which asserts certain joint integrability of the invariant subspaces $E^{c},E^{s}, E^{u}$. We will give the formal definitions  in  Definitions \ref{defini center bunch}-\ref{plaque expansive}.

In comparison, there is a paucity of progress towards Conjecture \ref{Density of accessibility}. When the center dimension is one, Conjecture \ref{Density of accessibility}  was proved by F. Rodriguez-Hertz, M.A. Rodriguez-Hertz and R. Ures in \cite{RHRHU}. It is still open for any $\dim E^c >1$. To describe the current state of Conjecture \ref{Density of accessibility}, we mention several related results, which were obtained among certain classes of systems. 

 \enmt
\item[$\bullet$] K. Burns and A. Wilkinson \cite{BW2} proved a version of Conjecture \ref{Density of accessibility} for compact group extensions over Anosov systems. 
 
\item[$\bullet$] In a recent paper \cite{HS}, V. Horita and M. Sambarino obtained some $C^r$-density result for a class of partially hyperbolic systems  with $\dim  E^c=2$ and \textit{uniformly compact} center foliations (see Definition \ref{definitionuniformlycompact}).
 
 \item[$\bullet$] Another $C^r$-density 
 result for partially hyperbolic systems with $\dim E^{c}=2$ was obtained recently by A. Avila  and  M. Viana \cite{AV} using a very different method. 
 
 \item[$\bullet$] Z. Zhang \cite{Z} recently proved $C^r$-density of $C^2$-stable ergodicity for a class of skew products over Anosov maps, satisfying pinching, bunching conditions with certain type of dominated splitting in the center subspace.  
  
\eenmt
 
 The difficulty of Conjecture \ref{Density of accessibility} is mainly due to the $C^2$-smallness of the perturbation. In fact, the $C^1$-density of stable accessibility was already proved by D. Dolgopyat and A. Wilkinson \cite{DW} in 2003. There was a line of research focused on the $C^1$ version of Conjecture \ref{stableergodicityconjecture}. In the case where $\dim  E^c=1,2$, this was proved in \cite{BMVW} and \cite{RHRHTU}. Recently, the $C^1$-version of Conjecture \ref{stableergodicityconjecture} was completely solved  by A.  Avila, S. Crovisier and A. Wilkinson \cite{ACW}.\footnote{In \cite{ACW} the authors showed that stable ergodicity is $C^1$-dense among  $C^1$ volume preserving partially hyperbolic diffeomorphisms. If Conjecture \ref{stableergodicityconjecture} is true, then such statement would be an immediate  corollary by \cite{Avila}.}

As the main result of this paper, we will verify $C^r$-density of stable ergodicity in $C^1$-neighbourhoods of two classes of partially hyperbolic systems, defined by some technical conditions. Let us first recall some  notions needed to state our result. 

The following definition is commonly-used in the study of partially hyperbolic systems (see \cite{PS,PS2,PSW,PSW3}). 
\begin{defi}[Pinching]\label{def pin}
	An Anosov diffeomorphism $f$ 
	with constants $\bar{\chi}^{s}, \bar{\chi}^{u}$ as in \eqref{anosovexponent1}, \eqref{anosovexponent2} is  called 
	\textit{$\theta$-pinched} for some $\theta \in (0,1)$ if
	there exist 
	$\hat{\chi}^{u}, \hat{\chi}^{s} > 0$ s.t.  
	\begin{gather*}
	e^{-\hat{\chi}^{s}} < \norm{Df^{-1}}^{-1}\leq \norm{Df} < e^{\hat{\chi}^{u}},\\  
	-\bar{\chi}^{s} + \theta \hat{\chi}^{u} < 0, \quad \bar{\chi}^{u} - \theta \hat{\chi}^{s} > 0.
	\end{gather*}
	
	A partially hyperbolic system $f$ 
	with functions  $\bar{\chi}^{s}, \bar{\chi}^{u}, \bar{\chi}^c, \hat{\chi}^c$ as in \eqref{ph ineq 1}--\eqref{ph ineq 4}   is   called 
	$\theta$-\textit{pinched}  for some $\theta \in (0,1)$ 
	if there exist $\hat{\chi}^{u}, \hat{\chi}^{s} > 0$ such that 
	\begin{gather}\label{ph ineq 5}
	e^{-\hat{\chi}^{s}} < \norm{Df^{-1}}^{-1}\leq \norm{Df} < e^{\hat{\chi}^{u}},\\  
	\theta \hat{\chi}^{u} <  \bar{\chi}^{s}+\bar{\chi}^c, \quad
	\theta \hat{\chi}^{s}<\bar{\chi}^{u} -  \hat{\chi}^c. \nonumber
	\end{gather}
\end{defi}

By definition, given any Anosov or partially hyperbolic diffeomorphism $f$, there exists $\theta \in (0,1)$ such that $f$ is $\theta$-pinched.

A related notion is the following. 
\begin{defi}[Center bunching]\label{defini center bunch}
	A partially hyperbolic system $f$ with functions $\bar{\chi}^{s}, \bar{\chi}^{u}, \bar{\chi}^{c}, \hat{\chi}^{c}$ as in \eqref{ph ineq 1}--\eqref{ph ineq 4} is called \textit{center bunched} (see \cite{BW}) if 
	\begin{equation}\label{def cent bun} 
	\hat{\chi}^{c}<\bar{\chi}^{s} + \bar{\chi}^{c},  \qquad
	-\bar{\chi}^{c}<\bar{\chi}^{u}- \hat{\chi}^{c}. 
	\end{equation}
\end{defi} 

\begin{defi}[Dynamical coherence, plaque expansiveness]\label{plaque expansive}
	We say that a partially hyperbolic  system $f$ is: 
	\begin{itemize}
		\item \textit{dynamically coherent} (see \cite{HPS}) if  $\Ecs_f$, resp. $\Ecu_f$, integrates to a $f$-invariant foliation $\cWcs_f$,  resp.  $\cWcu_f$, called the \textit{center-stable foliation}, resp. the \textit{center-unstable foliation}. In this case, for any $x \in X$, we let $\cWc_f(x):= \cWcs_f(x) \cap \cWcu_f(x)$; the collection of all such leaves forms a foliation  $\cWc_f$, called the  \textit{center foliation}, which integrates $\Ec_f$, and subfoliates both $\cWcs_f$ and $\cWcu_f$ (see \cite{BW3});
		\item \textit{plaque expansive} (see \cite[Section 7]{HPS}) if $f$ is dynamically coherent and there exists  $\varepsilon>0$ with the following property: if $(p_n)_{n \geq 0}$ and $(q_n)_{n \geq 0}$ are $\varepsilon$-pseudo orbits which respect $\cW_f^c$ and if $d(p_n,q_n)\leq \varepsilon$ for all $n \geq 0$, then $q_n\in \cW_f^c(p_n)$. It is known that plaque expansiveness is a $C^1$-open condition (see Theorem 7.4 in \cite{HPS}).
	\end{itemize} 
\end{defi}

\begin{defi}[Uniformly compact foliation]\label{definitionuniformlycompact}
	A foliation is   \textit{uniformly  compact} if all the leaves are compact, and the leaf volume of the leaves is uniformly bounded.
\end{defi}

We can now state our main result. 

\begin{thm} \label{main thm 2.1}
Let $X$ be a compact smooth Riemannian manifold. Let $r \in \N_{\geq 2} \cup \{\infty\}$, and assume that $f\in \mathrm{Diff}^r(X)$ 
is a dynamically coherent, center bunched partially hyperbolic diffeomorphism. Let $c:= \dim  E^c_f$. If  either $c=1$\footnote{Any partially hyperbolic diffeomorphism with center dimension $1$ is automatically center bunched.} and $f$ is plaque expansive, or $c >1$ and $f$ satisfies at least one of the following assertions:
\begin{itemize}
\item $f$ is $(\frac{c-1}{c})^{\frac{1}{7}}$-pinched and has uniformly compact center foliation;
\item $f$ is $(\frac{c-1}{c})^{\frac{1}{9}}$-pinched and the maps $x \mapsto E^{cs}_f(x), E^{cu}_f(x)$ are of class $C^1$,
\end{itemize}
then there exists a $C^1$-open neighbourhood of $f$ in $\diff^r(X)$, denoted by $\cU$, such that $C^1$-stable accessibility is prevalent in the $C^{r}-J-$Kolmogorov sense in $\cU$, for any $J \geq J_0$ (see Subsection \ref{sub prev} below for a precise definition). Here $J_0$ is an integer depending only on $\dim X$. 

Moreover, let $\mathrm{Vol}$ be a smooth volume form  on $X$, and assume that $f \in \diff^{r}(X, \mathrm{Vol})$ satisfies one of the previous conditions. Then the above conclusion is true for $\cU_{0}$, a $C^1$-open neighbourhood of $f$ in $\diff^{r}(X, \mathrm{Vol})$, in place of $\cU$.
In particular, $C^1$-stable ergodicity is $C^{r}$-dense in $\cU_{0}$.
\end{thm}

Theorem \ref{main thm 2.1} generalizes all the  results on stable ergodicity from \cite{BW2,HS,AV,Z} to arbitrary center dimension in the strongly pinched region. 
 Compared to the previous works, our result has two significant novelties:
\enmt
\item this is the first time that $C^r$-density of stable ergodicity is proved for fully nonlinear systems with arbitrary center dimension, for $r \geq 2$\footnote{Among the set of   compact group extensions, $C^r$-density of stable ergodicity was proved in \cite{BW2}. These systems also have higher center dimension, but they are simpler as the action on the fiber is by group translation, hence is characterised by finitely many parameters.};
\item this is the first result that shows that stable ergodicity and accessibility  are prevalent in the measure-theoretical sense. 
\eenmt
To provide motivations for (2), let us mention that  there are two ways to approach the question of genericity: topological and metric. These notions are sometimes conflicting\footnote{For instance, among circle diffeomorphisms, a topological generic  map  has rational rotation number, while those with irrational rotation number occupy positive measures in many one-parameter families.}. The study of prevalence properties goes back to Kolmogorov \cite{Kol}. We say that a property is prevalent if it holds for a \textit{typical dynamical system in Kolmogorov's sense} (see Definition \ref{parameter family, prevalence}) as in \cite{B,Kol,PS3} (see \cite{HunKal,OY} for different notions). 
 Even when $\dim E_f^c=1$, we have strengthened the result of \cite{RHRHU} as we show that stable ergodicity is not only $C^r$-dense but also prevalent among center bunched partially hyperbolic diffeomorphisms with one-dimensional center, assuming plaque expansiveness. In \cite[Conjecture 3]{PS3}, the authors conjectured that: for the generic finite dimensional submanifold $V$ in $\mathrm{Diff}^r(M)$ and almost every $f \in V$, the equivalence classes of points in the chain recurrent set of $f$ are open in the chain recurrent set. They also mentioned that the validity of such conjecture would give a ``finite spectral decomposition for $f$ where each piece of the decomposition has something akin to the accessibility property".  P. Berger \cite{B} constructed a counter-example to this conjecture in the Newhouse domain. Our result strongly suggests that the accessibility property could be prevalent among partially hyperbolic diffeomorphisms. 

\subsection{Further illustrations of our result}

Conjecture \ref{stableergodicityconjecture} has its origin in several concrete models. 
For instance, given an integer $n\geq 1$, the linear automorphism of $\T^n:= \R^n / \Z^n$ associated to a matrix $A \in \mathrm{SL}(n, \Z)$ is defined as 
the unique diffeomorphism $f_{A} \colon \T^n \to \T^n$ such that the following diagram commutes, where $\pi\colon \R^n \to \T^n$ denotes the natural projection:
\begin{center}
	\begin{tikzcd}
	\R^n \arrow{r}{A} \arrow[swap]{d}{\pi} & \R^n \arrow{d}{\pi} \\%
	\T^n \arrow{r}{f_{A}}& \T^n
	\end{tikzcd}
\end{center}
Back in the 1970's, Hirsch-Pugh-Shub \cite{HPS} already asked 
whether any ergodic linear automorphism of $\T^n$ stably ergodic, for $n \geq 2$.

Positive answer to this question is known when the map is Anosov by \cite{AnoSinai}. This question 
was solved by F. Rodriguez-Hertz in \cite{RH} for any $n \leq 5$. This in particular answered a special case of the question, asked in \cite{GPS} for an explicit $4 \times 4$ matrix. 
 More generally, in  \cite{RH}, the author investigated the case of pseudo-Anosov maps with $2$ eigenvalues of modulus 1. 
In \cite{PS}, the authors mentioned that the validity of Conjecture \ref{stableergodicityconjecture} would give a positive answer to the following weaker version of the previous question\footnote{See the remark below   \cite[Conjecture 1]{PS}.}, namely, given two integers $n, r \geq 2$, 
whether the $C^r$-generic volume preserving perturbation of any ergodic automorphism of $\T^n$ ergodic.  
To the best of our knowledge, the question remains open for any $\dim  E^c  >1$. 

Let us now consider $M=\mathrm{SL}(n,\R)/\Gamma$ for some  uniform discrete subgroup $\Gamma$  of $\mathrm{SL}(n,\R)$. Let   $A \in \mathrm{SL}(n,\R)$ with at least one eigenvalue of modulus different from $1$, and  let $L_A\colon M \to M$ be  the left translation by $A$. In \cite{PS}, the authors ask whether $L_A$ is stably ergodic among $C^2$ volume preserving diffeomorphisms of $M$. 

Unlike the case of $\T^n$, the topological complexity of the homogeneous manifold $M$ has so far prevented a generalization of \cite{RH}. One can also consider the generic version of the previous question, namely, whether  the $C^\infty$ generic volume preserving perturbation of $L_A$ is stably ergodic. This is true when the map $L_A$ is Anosov,   by \cite{AnoSinai}, or when the center dimension $\dim   E^c$ is equal to one, but the question remains open for any $\dim E^c>1$.

As corollaries of the main result of this paper, we  answer  the previous questions in any dimension in the strongly pinched region, namely for maps with pinching exponents close to $1$.

\begin{thm} \label{main thm 1.1}
	Let $n \geq 2$ be some integer. For any $r \in \N_{\geq 2} \cup \{\infty\}$, any linear partially hyperbolic automorphism $f_{A} \colon \T^n \to \T^n$, ergodic or not, that is $(\frac{c-1}{c})^{\frac{1}{5}}$-pinched, where $c$ is the number of eigenvalues of $A \in \mathrm{SL}(n, \Z)$ of modulus $1$, there exists $\cal U$, a $C^1$-open neighbourhood of $f_A$ in $\cPH^r(\T^n, \mathrm{Vol})$, such that for some $C^r$-dense subset $\cU'$ of $\cU$, any map in $\cU'$ is a stably ergodic diffeomorphism.
\end{thm}
%

\begin{thm}\label{main thm 1.2}
	Let $\Gamma$ be a uniform discrete subgroup   of $\mathrm{SL}(n,\R)$, let $M:=\mathrm{SL}(n,\R)/\Gamma$ and let $L_A\colon M \to M$ be  the left translation by $A \in \mathrm{SL}(n,\R)$, assuming that $A$ has an eigenvalue with modulus different from $1$. Then, the $C^\infty$ generic volume preserving perturbation of $L_A$ is stably ergodic for any $\theta$-pinched $L_A$, where $\theta \in (0,1)$ depends  only on the integer $n$. 
\end{thm}

The partially hyperbolic splittings for Theorems \ref{main thm 1.1} and \ref{main thm 1.2} are the canonical ones: the center spaces are the neutral subspaces of the affine actions. Note that even for linear automorphisms with two-dimensional center, Theorem  \ref{main thm 1.1} partially improves and generalizes the main result in \cite{RH}, in the following sense: (1) we removed the pseudo-Anosov condition; (2) our result also applies to non-ergodic maps; (3) we weakened the regularity condition (for the perturbations) from $C^5$ to $C^1$.  

Another open question in \cite{PS} is the following.\footnote{Actually, the version we give here is stronger than that initially stated in \cite{PS}.}
Given two compact Riemannian manifolds  $M,N$, 
where $M$ supports a $C^r$ volume preserving Anosov diffeomorphism $g \colon M \to M$, $r \in \mathbb{N}_{\geq  2} \cup \{\infty\}$, 
is the $C^r$-generic volume preserving perturbation of $g \times \mathrm{Id}\colon M\times N \to M \times N$ ergodic? 
Again, this question has a positive answer when the map in question satisfies $\dim  N =1$.  A recent result of A. Avila and M. Viana \cite{AV} also gives an affirmative answer to this question when $\dim  N=2$ (see also \cite{HS} for a related result). This question remains open for any $N$ of dimension at least $3$. 
As a corollary of our main result, we can answer this question in any dimension in the strongly pinched region.

\begin{thm}\label{main thm 1.3}
	Let  $M,N$ be two compact Riemannian manifolds,  
	where $M$ supports a $C^r$ volume preserving Anosov diffeomorphism $g \colon M \to M$, $r \in \mathbb{N}_{\geq  2} \cup \{\infty\}$, and 
	let $h \colon M \to \mathrm{Isom}(M)$ be a $C^r$ map. Assume that $g$ is $\big(\frac{n-1}{n}\big)^{\frac 17}$-pinched where $n:=\dim N$. Then a $C^r$-generic volume preserving perturbation of the map $(x,y)\mapsto (g(x),h(x,y))$ is stably ergodic. 
\end{thm}

Theorems \ref{main thm 1.1}, \ref{main thm 1.2} and \ref{main thm 1.3} are immediate consequences of a more general result, Theorem \ref{main thm 2}, stated in Section \ref{section main result}.


\subsection{Idea of the proof}

We follow closely the method in \cite{Z}. In \cite{Z}, the author
studied a class of skew products over Anosov systems, and divided the problem into:
I. showing that the property of having a stably open accessibility class is $C^r$-generic;
II. showing that the property of having an open accessibility class with intermediate
volume is $C^r$-meager. In Step I, we prove the existence of open accessibility classes
using a quantitative version of a theorem of M. Bonk and B. Kleiner in \cite{BK} (the details about this quantitative statement  can be found in Section \ref{section Bonk-Kleiner}). In our
setting, this boils down to destroying common intersections between many different
holonomy loops. This is done by a parameter exclusion within a family of random
perturbations. 

The main new observation in this paper (\textit{\`a la} Avila) is that: by letting the number of loops be sufficiently large compared to the dimension of the manifold, we are left with enough room to create open accessibility class at every point, due to the fact that the random perturbations we use are not very sensitive to the map and the point. The details about this argument can be found in Section \ref{section generic open acc}. This allows us to bypass Step II in \cite{Z} which only works under restrictive assumptions (for a different application of the method in Step II, see \cite{Z2}). Our construction is also suitable to study the measure-theoretical prevalence. Our method suggests that under a strong pinching condition, the failure of accessibility should be a phenomenon of {\it infinite codimension}.

On the technical level, in Section \ref{section spanning}, we use \cite{DW} to construct families of center disks to connect
different regions of the space. Small complications arise in the study of prevalence,
since we need to organize several families for different maps. For each disk we
consider a parametrized family of random perturbations generated by vector fields
with disjoint supports, and parametrize a part of the accessibility class of any given
point $x$ in a slightly smaller disk by $[0,1]^c$, where $c$ is the center dimension. We then apply Bonk-Kleiner's criterion to show the openness of the accessibility class of
$x$ for most parameters in the family. The regularity results in \cite{PSW,PSW3} are used to
reduce the problem to a finite set of points and loops.

\begin{convention} 
Given a compact smooth Riemannian manifold $X$ with a smooth volume form $\mathrm{Vol}$, for any $r \in \N_{\geq 1} \cup \{\infty\}$,	we denote by $\cPH^r(X)$ (resp. $\cPH^r(X,\mathrm{Vol})$) the set of all $C^r$ (resp. $C^r$ volume preserving) partially hyperbolic diffeomorphisms on $X$ with bounded $C^r$ norms. 
	
In the course of the paper, we will often use constants depending on a diffeomorphism $f$ (and that may or may not depend on other things). We say that a constant $C$ depending on a $C^r$ diffeomorphism $f$ is \textit{$C^r$-uniform} if it works for all diffeomorphisms in a $C^r$-open neighbourhood of $f$.
We introduce several constants related to a diffeomorphism in Notations \ref{notation 1}, \ref{notation 2} and Construction \ref{given c disk get a chart}.

Given $l \geq 0$ and diffeomorphisms $f_1,f_2,\dots, f_l$, we use the notation $\prod_{i=1}^{l}f_i$ to denote $f_l \circ \cdots \circ f_1$, where by convention $\prod_{i=j+1}^{j}f_i:= \mathrm{Id}$ for any $j=1,\dots,l-1$.
\end{convention}

\section*{Acknowledgments}
The authors wish to thank 
Artur Avila, Pierre Berger, Sylvain Crovisier, Julie Déserti, Jacopo De Simoi, Bassam Fayad, Pascal Hubert, Rapha\"el Krikorian, Carlos Matheus, Davi Obata,   Rafael Potrie, Enrique Pujals, Federico Rodriguez-Hertz, Bruno Santiago, Frank Trujillo, Amie Wilkinson  for their support and  useful conversations. We also wish to thank the hospitality of the Universit\'e Paris Diderot in Paris and of IMPA in Rio de Janeiro -- where part of this work was carried over. We thank the anonymous referee for many useful suggestions.

\section{Main result}\label{section main result}

In the following, we fix a smooth Riemannian manifold $X$  with a smooth volume form $\mathrm{Vol}$.
	Let $f \colon X \to X$ be a partially hyperbolic diffeomorphism  with functions $\bar{\chi}^{s}, \bar{\chi}^{u}, \bar{\chi}^{c}, \hat{\chi}^{c}$ as in \eqref{ph ineq 1}--\eqref{ph ineq 4}.
For any real number $\varrho \in \R$, we set 
\begin{equation}\label{definition betaa} 
\beta(f,\varrho):= \min\left(\frac{\bar{\chi}^{s} + \bar{\chi}^{c}}{\hat{\chi}^{u}}, \frac{\bar{\chi}^{u} - \hat{\chi}^{c}}{\hat{\chi}^{s}}\right)\min\left(\frac{\bar{\chi}^s}{\hat{\chi}^s}, \frac{\bar{\chi}^u}{\hat{\chi}^u}\right)^{\varrho}.  
\end{equation}

We will focus on the case where $f$ is dynamically coherent (recall Definition \ref{plaque expansive}) and satisfies   one of the following properties:
\begin{enumerate}[label=$(\alph*)$]
	\item\label{condition a}
	the center foliation $\cWc_f$ is  uniformly compact (see Definition \ref{definitionuniformlycompact});  
	\item\label{condition b} the maps $x \mapsto E^{cs}_f(x), E^{cu}_f(x)$ are of class $C^1$.
\end{enumerate}

Let us state the most general version of our result, which contains Theorem \ref{main thm 2.1}.
\begin{thm}\label{main thm 2}
Let $r \in \N_{\geq 2} \cup \{\infty\}$, let $f  \in \cPH^r(X)$ be dynamically coherent and center bunched, and let $c:= \dim E^{c}_f$.  If either $c=1$ and $f$ is plaque expansive, or $c>1$ and $f$  satisfies one of the following conditions:
\begin{enumerate}[label=(\arabic*)]
	\item\label{condition 1 thm E} condition \ref{condition a} holds for $f$, and $\beta(f,3)>\frac{c-1}{c}$;
	\item\label{condition 2 thm E} condition \ref{condition b} holds for $f$, and $\beta(f,4)>\frac{c-1}{c}$,
\end{enumerate}
then there exist $\cU$, a $C^1$-open neighbourhood of $f$ in $\diff^r(X)$, and  an integer $J_0$ only depending on $\dim X$, such that for any $J \geq J_0$, $C^1$-stable accessibility is prevalent in $\cU$ in the $C^{r}-J-$Kolmogorov sense. 

Moreover, let $f \in \diff^{r}(X, \mathrm{Vol})$ satisfy the above condition. Then the above conclusion is true for $\cU_{0}$, a $C^1$-open neighbourhood of $f$ in $\diff^{r}(X, \mathrm{Vol})$, in place of $\cU$.
In particular, $C^1$-stable ergodicity is $C^{r}$-dense in $\cU_{0}$.

\end{thm}

\begin{rema}
	If $f$ is $\theta$-pinched for some $\theta\in(0,1)$ and satisfies  \ref{condition a}, resp. \ref{condition b}, then we can see that $\beta(f,3)>\theta^{7}$, resp. $\beta(f,4)>\theta^{9}$. Therefore,  Theorem \ref{main thm 2.1} is a consequence of Theorem \ref{main thm 2}.  Similarly, Theorem \ref{main thm 1.1}, \ref{main thm 1.2} and \ref{main thm 1.3} follow by making appropriate choice of functions $\bar{\chi}^{s}, \bar{\chi}^{u}, \bar{\chi}^{c}, \hat{\chi}^{c}$. We omit these straightforward computations. 
\end{rema}

\section{Preliminaries}

In the following section, we recall some general notions about parameter families, prevalence and partially hyperbolic diffeomorphisms. 

\subsection{Prevalence}\label{sub prev}
Let $X$ be a smooth Riemannian manifold with a smooth volume form $\mathrm{Vol}$. We refer the reader to \cite[Chapter II, \S 3]{GG} and \cite{Hirsch} for more details about the notions recalled in the following. 


\begin{defi}[$C^r$ topology]
	Let $m,n \geq 1$ be integers. Given $k \in \N$ and  $f \in C^k(\R^m, \mathbb{R}^n)$, we set $\|f\|_{C^k}:=\sup\limits_{0 \leq i\leq k,\ x \in \R^m} \|\partial_x^i f\|$. Here, $\partial_x^i f$ is a multi-linear map from $(T_x \R^m)^i$ to $T_{f(x)}\R^n$, and $\|\partial_x^i f\|$ denotes the norm of this linear map. We let $d_{C^k}$ be the distance induced by   $\|\cdot\|_{C^k}$. Given $f,g \in C^\infty(\R^m,\R^n)$, we set $d_{C^\infty}(f,g):=\sum_{k=0}^{\infty} 2^{-k} \frac{\|f-g\|_{C^k}}{\|f-g\|_{C^k}+1}$. 
	For $r \in \N\cup \{\infty\}$, the $C^r$  topology on 
	$C^r(\R^m, \mathbb{R}^n)$ is the  topology induced by   $d_{C^r}$.
	Given smooth Riemannian manifolds $M,N$, we define accordingly the $C^r$ topology for maps between $M$ and $N$,  
	and we denote by  $C_b^r(M,N)$   the subset of maps in $C^r(M,N)$ with bounded distance to the constant maps.
\end{defi}

\begin{defi}[Parameter family]
Given integers $r,m,n,J\geq 1$, we define the space $C^r([0,1]^J, C_b^r(\R^m,\R^n))$ as the set of families $\{f_\omega\}_{\omega\in [0,1]^J}$ of maps $f_\omega\in C^r(\R^m,\R^n)$ such that for every $0 \leq i,j \leq r$ the derivative  $\partial_\omega^i \partial_x^j f_\omega(x)$ is a multi-linear map which depends continuously on  $(\omega,x) \in [0,1]^J \times \R^m$.
The $C^r$ topology on the space $C^r([0,1]^J,C_b^r(\R^m,\R^n))$ is the topology induced by the norm $\|\cdot \|_{C^r}$:
$$
\|\{f_\omega\}_\omega\|_{C^r}:=\sup\limits_{\substack{0\leq i,j \leq r,\\ (\omega,x)\in [0,1]^J \times \R^m}} \|\partial_\omega^i \partial_x^j f_\omega(x)\|.
$$ 
The $C^\infty$ topology on $C^\infty([0,1]^J,C_b^\infty(\R^m,\R^n))$ is defined by analogy.

Let $r \in \N\cup \{\infty\}$. Given smooth  Riemannian manifolds $M,N$ and $\cU \subset C_b^r(M,N)$, a $C^r-J-$\textit{family in} 
$\cU$ is an element $\{f_\omega\}_\omega \in C^r([0,1]^J,\cU)$. We define the $C^r$ topology on $C^r([0,1]^J,\cU)$ analogously and denote by $d_{C^r}$ the associated metric.
\end{defi}
\begin{defi}[Prevalence] \label{parameter family, prevalence}
	Let $\mathcal{U}$ be an open subset in $\mathrm{Diff}^r(X)$ or $\mathrm{Diff}^r(X,\mathrm{Vol})$, $r \in \N\cup \{\infty\}$.   A property $\mathscr{P}$ for maps in $\mathrm{Diff}^{r}(X)$ or $\mathrm{Diff}^r(X,\mathrm{Vol})$  is said to be \textit{prevalent in the $C^r-J-$Kolmogorov sense} in $\cU$, if for a $C^r$-generic $C^r-J-$family $\{f_\omega\}_\omega$ in $\cU$ and for almost every $\omega \in [0,1]^J$,  $f_\omega$ satisfies $\mathscr{P}$.
\end{defi}


We introduce the following notion for technical reasons.  
\begin{defi}[Good family]\label{defregularfamilyofdiff}
	Let $r \in \N \cup \{\infty\}$. Given an integer $J \geq 1$, a $C^r-J-$family  $\{f_\omega\}_{\omega} \in C^{r}([0,1]^{J}, \diff^{r}(X))$ is said to be \textit{good} if the fixed points of $f_\omega^{k}$ are isolated for all integer $k \geq 1$ and almost every $\omega \in [0,1]^{J}$.
\end{defi}

\begin{prop}\label{propapproximationbyregularfamily} 
There exists $J_0 = J_0(\dim X)>0$, which we fix in the rest of this paper, with the following property. For any  $r \in \N_{\geq 2} \cup \{\infty\}$ and any integer $J \geq J_0$,  the set of good $C^{r}-J-$families is dense in the set of $C^{r}-J-$families with respect to the $C^r$ topology.
\end{prop}  
\begin{proof}
This is essentially contained in the proof of \cite[Theorem 2.2]{Kal}. In \cite{Kal}, the author showed the prevalence of Kupka-Smale diffeomorphisms in $\diff^{r}(X)$.  In contrast to the Kupka-Smale property, our notion of good family is only a transversality condition on the level of $0$-jets.  Next lemma suffices for our purpose. 

\begin{lemma}\label{modification1} For any integers $p, q \geq 3$, any $r \in \N_{\geq 2} \cup \{\infty\}$, for any $C^r$ map $f \colon (-1,1)^{p} \to \R^{q}$, there exist an integer $L \geq 1$ and $C^{\infty}$ divergence-free vector fields $V_1,\dots, V_L$ on $\R^{q}$, supported in $(-1,1)^{q}$, such that the following is true. Let us denote by $\mathscr{F}^{b_i}_{V_i} \colon \R^q \to \R^q$ the time-$b_i$ map of the flow generated by $V_i$, and let $F \colon (-1,1)^{p} \times (-1,1)^L \to \R^{q}$ be defined by $F\colon (x,b) \mapsto \mathscr{F}_{V_{L}}^{b_L}\circ \cdots\circ \mathscr{F}_{V_{1}}^{b_1}(f(x))$, for all $b=(b_1,\dots,b_L)$. Then the map 
\begin{equation*}
\mathscr{G} \colon
(-1,1)^{p+L} \to \R^{p} \times \R^q,\quad 
(x,b) \mapsto (x, F(x,b))
\end{equation*}
is a submersion for any $(x,b)$ such that $F(x,b) \in (-\frac 12,\frac 12)^{q}$.
\end{lemma}
\noindent Lemma \ref{modification1}  is proved via 
a direct construction. Using Lemma \ref{modification1} in place of  \cite[Lemma 1.5]{Kal}, the proof of Proposition \ref{propapproximationbyregularfamily} proceeds as  that of \cite[Theorem 2.2]{Kal}. 
\end{proof}

\subsection{Partially hyperbolic diffeomorphisms} \label{ph dif}

Fix an integer $d \geq 1$. We let $X$ be a smooth $d$-dimensional Riemannian manifold with a smooth volume form $\mathrm{Vol}$.

Let $f \colon X \to X$ be a  partially hyperbolic diffeomorphism or an Anosov map.

In the following, we will call a leaf of $\cW_f^c,\cWcu_f$, etc. a center leaf, center-unstable leaf, etc. 

 \begin{defi}[Holonomies] 

Let $f\in \cPH^1(X)$ be dynamically coherent. 

$\bullet$ Let $x_1\in X$ and $x_2 \in \cW_f^s(x_1)$.  By transversality, for $i =1,2$, there exists a neighbourhood  $\cC_i$ of $x_i$ within $\cW_{f,\mathrm{loc}}^{cu}(x_i)$ such that for any $x \in \cC_1$, the local $s$-leaf through $x$ intersects $\cC_2$ at a unique point, denoted by $H_{f,x_1,x_2}^{s}(x)=H_{f,\cC_1,\cC_2}^{s}(x)$. We thus   get a well-defined local homeomorphic embedding  
$H_{f,\cC_1,\cC_2}^{s}\colon
 \cC_1 \to \cC_2$,
called the (local) \textit{stable holonomy map} between $\cC_1$ and $\cC_2$.  For $i=1,2$, set $\tilde{\cC}_i:=\cC_i\cap \cW_{f,\mathrm{loc}}^{c}(x_i)$; by restriction,  $H_{f,\cC_1,\cC_2}^{s}$ induces a local homeomorphism $H_{f,\tilde \cC_1,\tilde \cC_2}^{s}\colon
\tilde \cC_1 \to \tilde \cC_2$. Unstable holonomies are defined accordingly.

$\bullet$ Let $x_1\in X$ and $x_2 \in \cW_f^c(x_1)$ be two sufficiently close points in the same center leaf. 
Let $*\in \{u,s\}$. 
Then, 
the (local) \textit{center holonomy map} 
$
H_{f,x_1,x_2}^c
$ 
along local leaves in $\cW_f^c$ is a well-defined local homeomorphism from a neighbourhood of $x_1$ in $\mathcal{W}_f^*$ to $\mathcal{W}_f^*(x_2)$.
\end{defi}



The following result  \cite[Theorem A]{PSW} relates the pinching condition in Definition \ref{def pin} with the regularity of $u,s$-holonomy maps.
\begin{prop} \label{thm theta pinching theta holder}
If $f\in \cPH^1(X)$ is $\theta$-pinched for some $\theta \in (0,1)$, then the local unstable and stable holonomy maps are uniformly $\theta$-H\"older.
\end{prop}


The following result is contained in the proof of  \cite[Theorem B]{PSW}. It relates the center bunching condition in Definition \ref{defini center bunch} to the regularity of $u,s$-holonomies.
\begin{prop}\label{cor smooth holonomy maps}
If $f \in \cPH^{2}(X)$ is dynamically coherent and center bunched, then local stable/unstable 
holonomy maps between center leaves are $C^1$ 
 when restricted to some center-stable/center-unstable leaf  
and have uniformly continuous derivatives. 
\end{prop}

In Proposition \ref{cor smooth holonomy maps}, uniformity of the continuity is a simple consequence of the invariant section theorem and the uniform $C^2$ bound. 

\subsection{On leaf conjugacy} 

Later on, we will focus on dynamically coherent systems satisfying one of the conditions \ref{condition a} or \ref{condition b} in Section \ref{section main result}. 
The following result is due to Hirsch-Pugh-Shub. 
\begin{prop}[Theorem 7.1, \cite{HPS}, see also Theorem 1 in \cite{PSW3}]\label{thmplaqueexpansivetostablydc}
	Let $f$ be a dynamically coherent partially hyperbolic diffeomorphism  satisfying \ref{condition a} or \ref{condition b}.
	 Then any $g \in \cPH^1(X)$ which is sufficiently $C^1$-close to $f$ is also dynamically coherent. Moreover, there exists a homeomorphism $\mathfrak{h}=\mathfrak{h}_g \colon X\to X$,
	called a \textit{leaf conjugacy}, such that: (1) $\mathfrak{h}$ maps a $f$-center leaf to a $g$-center leaf; (2) both $h$ and $h^{-1}$ tend to ${\rm Id}$ in the uniform norm as $d_{C^1}(f,g)$ tends to $0$.
\end{prop}
\begin{proof} 
It suffices to see that any $f$ in the proposition is plaque expansive (recall Definition \ref{plaque expansive}). 
The plaque expansiveness is proved in \cite{Ca} (see also \cite[Proposition 13]{PSW3}) under  \ref{condition a}, respectively in \cite{HPS} under \ref{condition b}. 
\end{proof}

 The following result, due to Pugh-Shub-Wilkinson \cite{PSW3},  ensures that the leaf conjugacy $\mathfrak{h}$ in Theorem \ref{thmplaqueexpansivetostablydc} has H\"older regularity under \ref{condition a} or \ref{condition b}.

\begin{prop}[Theorems A-B,  \cite{PSW3}]\label{thmregularityofleafconjandcenterholon}
Let $f \in \cPH^1(X)$ be dynamically coherent, satisfying  \ref{condition a} (resp. \ref{condition b}), let $\bar \chi^s,\bar \chi^u,\hat \chi^s,\hat \chi^u$ be as in \eqref{ph ineq 1}--\eqref{ph ineq 5}, and let $\theta$ be a constant such that
\begin{equation} \label{theta''condition}
0 < \theta < \min\left(\frac{\bar{\chi}^s}{\hat{\chi}^s}, \frac{\bar{\chi}^u}{\hat{\chi}^u}\right) \leq 1.
\end{equation}
Then for some $C^1$-open neighbourhood $\cU_0 = \cU_0(f, \theta)$ of $f$, for any $g \in \cU_0$, the leaf conjugacy $\mathfrak{h}_g$ in Proposition \ref{thmplaqueexpansivetostablydc} exists and can be made uniformly $\theta$-H\"older, and local center holonomies between  sufficiently close   leaves 
are uniformly $\theta$-H\"older (resp. $\theta^2$-H\"older).
\end{prop}

We will later use Propositions \ref{thm theta pinching theta holder}, \ref{cor smooth holonomy maps} and \ref{thmregularityofleafconjandcenterholon} while keeping track of the uniformity of various quantities. We summarise these statements as follows.

\begin{nota}  \label{notation 1}

Let $X$ be a $d$-dimensional compact Riemannian manifold ($ d \geq 3$) with metric $d(\cdot, \cdot)$, and $f \in \cPH^1(X)$ be dynamically coherent and plaque expansive. 
 We denote by $d_{\cWs_f}, d_{\cWu_f}, d_{\cWc_f}, d_{\cWcs_f}, d_{\cWcu_f}$ the associated leafwise distances. For any $x\in X$, $\sigma>0$, and $*=s,u, c,cs,cu$, we set $\cW_f^*(x,\sigma):= \{y \in \cW_f^*(x) \ \vert\ d_{\cW_f^*}(x,y) < \sigma \}$.
Then there exist $C^1$-uniform constants $h_f > 0$, $\sigma_f \in (0,   h_f)$, $C_f > 1$, $\overline{\Lambda}_f > 1$, $\varepsilon_f > 0$, $\theta'_f, \theta''_f \in (0,1)$, and in case $f$ is $C^2$, also
a $C^2$-uniform constant $\Lambda_f>0$ satisfying 
\enmt
\item\label{item 1 definition 1} For $* = c,s,u,cs,cu$, for any $x \in X$, $y \in \cW^{*}_f(x, \sigma_f)$, 
we have 
\begin{center}
$d(x,y) \leq d_{\cW^{*}_f}(x,y) \leq C_f d(x,y)$.
\end{center}
\item\label{item 2 definition 1}  
For any $x \in X$, any $y \in B(x, \sigma_f)$, $\cW^{s}_f(x, h_f)$ transversally intersects $\cW^{cu}_f(y, h_f)$ at a unique point $z$, and $d_{\cW^{s}_f}(x,z), d_{\cW^{cu}_f}(z,y) < C_fd(x,y)$. If in addition, $y \in \cW^{cs}(x, \sigma_f)$, then $d_{\cW^s_f}(x,z),d_{\cWc_f}(z,y) < C_fd_{\cW^{cs}_{f}}(x,y)$.  
\item\label{item 3 definition 1} (Center bunching) If $f \in \cPH^2(X)$ is center bunched, then for any $x \in X$ and $y \in \cW^{cs}_f(x,\frac{\sigma_f}{2})$, the holonomy map $H^{s}_{f,x,y} \colon \cWc_f(x,\frac{\sigma_f}{2}) \to \cWc_{f}(y,  C_f\sigma_f)$ 
is well-defined. Moreover, $DH^{s}_{f,x,y}$ is uniformly continuous  and has norm bounded by $\Lambda_f$. 
\item\label{item 4 definition 1} (Pinching) $f$ is $\theta'_f$-pinched. Besides,  if $f \in \cPH^2(X)$, then  for any $x \in X$, $y \in \cW^{s}_f(x, \frac{\sigma_f}{2})$, 
the holonomy map $H_{f,x,y}^{s}\colon \cW^{cu}_f(x,\frac{\sigma_f}{2})\to \cW^{cu}_f(y,1) $ 
is well-defined 
and has $\theta'_f$-H\"older norm bounded by $\Lambda_f$.


\item\label{item 5 definition 1} (H\"olderness of $\mathfrak{h}$ and $H^c$) 
If $f$ satisfies  \ref{condition a}, resp. \ref{condition b}, then $\theta''_f$ satisfies \eqref{theta''condition} in place of $\theta$, and the following is true:
\begin{enumerate}
	\item[(5.1)]\label{subitem 6.1}  any $g \in \cPH^1(X)$ with $d_{C^1}(f,g) < \varepsilon_f$ is plaque expansive and  for any $x \in X$, any $y \in \cWc_g(x, \frac{\sigma_f}{2})$, the holonomy map $H_{g,x,y}^c$ is defined  on $\cW^{*}_g(x, \frac{\sigma_f}{2})$, $*=u,s$, and its $\theta''_f$- (resp. its $(\theta''_f)^2$-) H\"older norm is bounded by $\overline{\Lambda}_f$. 
	\item[(5.2)]\label{subitem 6.2} For any $g_1,g_2 \in \cPH^1(X)$ such that $d_{C^1}(f,g_i)< \varepsilon_f$, $i=1,2$, the leaf conjugacy $\mathfrak{h}_{g_1,g_2}=\mathfrak{h}_{g_2}\circ \mathfrak{h}_{g_1}^{-1}$ has $(\theta_f'')^2$-H\"older norm bounded by $\overline{\Lambda}_f$. Here $\mathfrak{h}_{g_i}$ is given by Proposition \ref{thmregularityofleafconjandcenterholon} for $g_i$ in place of $g$. 
\end{enumerate}
\eenmt

Moreover, we assume that Properties \ref{item 2 definition 1}, \ref{item 3 definition 1}, \ref{item 4 definition 1} above are also satisfied when we exchange the roles of $u$ and $s$.  
\end{nota}

\begin{defi}
	Let $f \in \cPH^2(X)$ be dynamically coherent and center bunched. 
	We say that $f$ satisfies  
	\begin{equation}\label{condition ae}
	\mbox{if \  \ref{condition a} holds and} \ 
	\theta'_f (\theta''_f)^{3} > \frac{c-1}{c}.
	\tag*{$(ae)$}
	\end{equation} 
	Similarly, we say that  $f$ satisfies   
	\begin{equation}\label{condition be}
	\mbox{if \  \ref{condition b} holds and } \ \theta'_f (\theta''_f)^{4} > \frac{c-1}{c}. 
	\tag*{$(be)$}
	\end{equation} 
\end{defi}

\begin{rema} 
If  Theorem \ref{main thm 2}\ref{condition 1 thm E}  holds for $f$, 
then  by Definition \ref{def pin}, Propositions \ref{thm theta pinching theta holder}, \ref{thmregularityofleafconjandcenterholon}, we can choose $\theta'_f, \theta''_f$ such that \ref{condition ae} holds. Similarly, if  Theorem \ref{main thm 2}\ref{condition 2 thm E}  holds for $f$,
then   we can choose $\theta'_f, \theta''_f$ such that \ref{condition be} holds. 
\end{rema}

\section*{Standing hypotheses for the rest of the paper}

We denote by $X$ a $d$-dimensional compact smooth Riemannian manifold with a smooth volume form $\mathrm{Vol}$; $r$ belongs to $\N_{\geq 1} \cup \{\infty\}$; and $f \in \diff^r(X)$.  
Whenever $f$ is declaimed to be partially hyperbolic, we denote  $c:=\dim E_f^c$, $d_s:=\dim  E_f^s$ and $d_u:=\dim E_f^u$.  Moreover:

\begin{enumerate}
	\item[(H1)]\label{(H1)} $f \in \cPH^r(X)$ is dynamically coherent and plaque expansive   in Sections \ref{subsection c disks}-\ref{subs holo map}, \ref{from dim gap}, \ref{holo maps fami}, \ref{sec construct cha};
	\item[(H2)]\label{(H2)} $f\in \cPH^r(X)$ is center bunched and $r \geq 2$, in Sections \ref{subs holo map},  \ref{from dim gap}.
\end{enumerate}

\section{Random perturbations} \label{sec random perturbations}

In this section, we will establish some estimates for certain perturbations of the holonomy maps of a dynamically coherent plaque expansive  partially hyperbolic
diffeomorphism.

\subsection{Basic notions and constructions}
We start with the following more general
situation. The following suspension construction will be used repeatedly.

\begin{defi}[$C^r$ deformation] \label{smooth deform}
Let $a \in \R^I$ for some integer $I >0$, and let  $U$ be an open neighbourhood of $a$ in $\R^{I}$.
A $C^{r}$ map $\hat{f} \colon U \times X \to X$ satisfying $\hat{f}(a,\cdot) = f$ and $\hat f(b,\cdot)\in \diff^r(X)$, $\forall\, b \in U$ is called a \textit{$C^r$ deformation at $(a,f)$ with $I$-parameters}.
We associate with such $\hat{f}$ the suspension map $T(\hat{f})$ defined by 
\begin{equation}\label{def T}
T= T(\hat{f}) \colon
U \times X \to U \times X,\quad 
(b,x) \mapsto (b,\hat{f}(b,x)). 
\end{equation}
If in addition $\hat{f}(b,\cdot) \in \Diff^{r}(X,\mathrm{Vol})$ for all $ b \in U$,  then we say that $\hat{f}$ is  \textit{volume preserving}. 
%
\end{defi}
%
%

\begin{defi}[Infinitesimal $C^r$ deformation]\label{def infini deform}
	
	Given an integer $I>0$, a $C^r$ map $V \colon \R^{I} \times X \to TX$ is called an \textit{infinitesimal $C^{r}$ deformation with $I$-parameters} if 
	
	\begin{enumerate}
		\item for each $B \in \R^{I}$, $V(B,\cdot)$ is a $C^{r}$ vector field on $X$;
		\item for each $x \in X$, $B \mapsto V(B,x)$ is a linear map from $\R^{I}$ to $T_{x}X$.
	\end{enumerate}
\end{defi}
	
\begin{construction}
	Given $I >0$,  $a \in \mathbb{R}^I$, and $V$, an infinitesimal $C^{r}$ deformation with $I$-parameters, then for any sufficiently small $\epsilon > 0$, we associate with $V$ a $C^{r}$ deformation at $(a,f)$ with $I$-parameters, denoted by $\hat{f}$,  which is defined by 
	\begin{equation*}
	\hat{f}(b, x):= \mathscr{F}_{V(b-a, \cdot)}(1,  f(x)), \quad \forall\, (b,x) \in U \times X,
	\end{equation*} 
	where $U = B(a, \epsilon) \subset \R^{I}$ and for any $B \in \mathbb{R}^I$, $\mathscr{F}_{V(B,\cdot)} \colon \R \times  X  \to X$ denotes the $C^r$ flow generated by the vector field $V(B,\cdot)$.
	In this case, we say that $\hat{f}$ is \textit{generated by} $V$. 
	If in addition  for each $B \in \R^{I}$, $V(B,\cdot)$ is divergence-free, then $\hat f$ is  volume preserving  as in Definition \ref{smooth deform}, and we say that $V$ is  \textit{volume preserving}. 
\end{construction}

For any $V$ as in Definition \ref{def infini deform}, we use $\|\cdot\|_X$ to denote the uniform norm of the derivatives of $V$ restricted to $\{0\}\times X$.
The following lemma gives bounds on the norms of deformations induced by infinitesimal deformations.

\begin{lemma}\label{lem compare T V}
Assume that  $r \geq 2$. Let $I \in \N_{\geq 1}$ 
and let  $\hat{f} \colon U \times X \to X$ be a $C^r$ deformation at $(0,f)$ generated by some infinitesimal $C^r$ deformation  with $I$-parameters $V$. Take $T = T(\hat{f})$ as in Definition \ref{smooth deform}. Then there exists a  $C^2$-uniform constant $C_0=C_0(f)>0$, such that by possibly taking $U$ smaller, it holds: 
\enmt
\item  $\norm{DT} < C_0 (1 + \norm{\partial_bV}_X) $ and $\norm{D^2T} < C_0 (1 + \norm{\partial_b\partial_xV}_{X})(1 + \norm{\partial_bV}_{X})$;
\item $\pi_X DT((0,x),B) = V(B,f(x))$
for any $(x,B) \in X\times T_0 U$. Here for each $v \in T(U\times X)$, we denote by $\pi_X(v)$ the component of $v$ in $TX$. 
\eenmt
\end{lemma}
\begin{proof} 
We defer the  proof to  Appendix \ref{app b}. \end{proof}

Some of the estimates will depend on the support of a deformation or of an infinitesimal deformation, which we now define.
\begin{defi} \label{support of deformation} 
For an infinitesimal $C^{r}$ deformation with $I$-parameters $V \colon \R^{I}\times X \to TX$, we define
\begin{align*}
\mathrm{supp}_{X}(V):= \{ x \in X\ \vert\ \exists\, B \in \R^{I} \mbox{ such that } V(B,x) \neq 0 \}.
\end{align*} 
Given $a \in \R^I$, an open neighbourhood $U$ of $a$, and a $C^{r}$ deformation at $(a,f)$ with $I$-parameters $\hat{f} \colon U \times X \to X$, we define
\begin{align*}
\mathrm{supp}_{X}(\hat{f}):= \{x \in X\ \vert\ \exists\, b \in U \mbox{  such that } \hat{f}(b,x) \neq f(x) \}.
\end{align*}
\end{defi}
It is clear from Definitions \ref{def infini deform} and \ref{support of deformation} that for any infinitesimal  $C^{r}$ deformation $V$, if $\hat{f}$ is the $C^{r}$ deformation of $f$ generated by $V$, then we have 
\begin{align}\label{supp V supp hat f}
\mathrm{supp}_{X} (\hat{f}) \subset f^{-1}(\mathrm{supp}_{X}(V)).
\end{align}

\subsection{$c$-disk and $c$-family}\label{subsection c disks} 
 We first introduce the following notion. 
\begin{defi}[Accessibility class]\label{Accessibility class}
	Let $f\colon X \to X$ be a partially hyperbolic diffeomorphism. For any $x \in X$, any $\ell > 0$ and any integer $k \geq 1$, we let $Acc_{f}(x, \ell, k)$ be the set of all points $y \in X$  that can be attained from $x$ through a $k$-legged \textit{accessibility sequence} $x=z_0, z_1, \dots, z_k = y$, where for each $0 \leq i \leq k-1$, $z_{i+1} \in \cWs_{f}(z_i, \ell) \cup \cWu_f(z_i, \ell)$. We let the \textit{accessibility class} of $f$ at $x$ be $Acc_f(x):= \cup_{\ell > 0, k \geq 1} Acc_f(x, \ell, k)$. 
\end{defi}
For any $f\in \cPH^1(X)$, accessibility classes of $f$ form a partition of $X$. By Definition \ref{def acc}, $f$ is accessible if and only this partition consists of a single class.

In the rest of Section \ref{sec random perturbations}, we assume  $f$ satisfies \hyperref[(H1)]{(H1)}. 

\begin{defi}[$c$-disk] 
	For each $x \in X$ and $\sigma>0$, we call $\cC = \cWc_f(x,\sigma)$ the \textit{center disk} of $f$ (or $c$-disk of $f$ for short) centered at $x$ with radius $\sigma$, and we set $\varrho(\cC):=\sigma$. In addition, for any $\theta \in (0, 1 ]$, we also define $\theta \cC:= \cWc_f(x, \theta \sigma)$.
\end{defi}

\begin{defi}
A collection of disjoint center disks $\cal D = \{\cC_1, \dots, \cC_K \}$ is called a \textit{family of center disks for $f$} (or \textit{$c$-family for $f$} for short). In addition, we set 
$$
\underline{r}(\cD):=\inf_{\cC\in \cD}\{\varrho(\cC)\},\quad \overline{r}(\cD):=\sup_{\cC\in \cD}\{\varrho(\cC)\},\quad n(\cD):= K.
$$
Given $\theta \in (0,1)$  and 
$k \in \mathbb{N}_{\geq 1}$, we say that $\cal D$ is a \textit{$(\theta, k)$-spanning} $c$-family for $f$ if 
\begin{equation*}
X = \cup_{\cC \in \cD} \cup_{x \in \theta\cC} Acc_{f}(x, 1, k).
\end{equation*}
Given any subset $\mathcal{C} \subset X$, and $\sigma\geq 0$, we set $(\mathcal{C},\sigma):=\{x \in X\, \vert\, d(x,\mathcal{C})\leq \sigma\}$. 
Given a collection $\cD = \{\cC_1, \dots, \cC_K\}$ of subsets of $X$, we set 
\begin{equation*}
(\cD, \sigma):= \cup_{j=1}^K (\cC_j, \sigma),\qquad \forall\,\sigma \geq 0. 
\end{equation*}

A collection $\cD$  of subsets of $X$ is called \textit{$\sigma$-sparse} if for any two distinct $\cC, \cC' \in \cD$, $(\cC, \sigma), (\cC', \sigma)$ are disjoint. Any $c$-family for $f$ is $\sigma$-sparse for some $\sigma>0$. 

\end{defi}


The next lemma is a consequence of the continuity of the invariant foliations with respect to the dynamics. Roughly speaking, it says that any $c$-family can be slightly perturbed into a $c$-family for a given nearby map. 

\begin{lemma}\label{lemma stability spanning}
	For any  $\theta\in (0,1)$, $\theta' \in (\theta,1)$, $\theta'' \in (0,1]$, $\rho_M > 0$, $\rho_m \in (0,\rho_M)$, and any $\sigma>0$, there exists a $C^1$-open neighbourhood $\mathcal{U}$ of $f$ in $\cPH^1(X)$, and $C^1$-uniform constants $\epsilon>0$ and $\sigma'>0$, such that for any $c$-disk of $f$, denoted by $\mathcal{C}=\cW_f^c(x,\rho)$, with $\rho\in (\rho_m,\rho_M)$; for any  $g \in \cU$; for any $y \in B(x,\epsilon)$, the $c$-disk $\mathcal{C}_g=\cW_g^c(y,\rho)$  of $g$ satisfies $\mathcal{C}_g \subset (\mathcal{C},\sigma)$, $\theta''\mathcal{C}_g \subset (\theta''\mathcal{C},\sigma)$, and 
	\begin{equation}\label{eq lem c family}
	(\theta\mathcal{C},\sigma')\subset \cup_{y \in \theta'\mathcal{C}_g} Acc_g(y,1,2). 
	\end{equation}
\end{lemma}

\begin{proof}
	By letting $\epsilon$ be sufficiently small, we clearly have that for any $g$ sufficiently $C^1$-close to $f$, $\mathcal{C}_g=\cW_g^c(y,\rho)$ satisfies $\mathcal{C}_g \subset (\mathcal{C},\sigma)$ and $\theta''\mathcal{C}_g \subset (\theta'' \mathcal{C},\sigma)$. By Proposition \ref{thmplaqueexpansivetostablydc}, $\cW_g^u$, $\cW_g^{cs}$ (resp. $\cW_g^s$, $\cW_g^{cu}$) exist and are uniformly transverse for all $g$ sufficiently $C^1$-close to $f$. Then \eqref{eq lem c family} follows by letting $\sigma'$ and $d_{C^1}(f,g)$ be sufficiently small compared to $\rho_m$, $\rho_M$ and $\theta'-\theta$. 
\end{proof}

\begin{rema}\label{remarque trois}
	Given any $\theta,\theta',\theta'',\rho_m,\rho_M$ as in Lemma \ref{lemma stability spanning}, any $\sigma>0$, any integer $k \geq 1$, any $(\theta,k)$-spanning $c$-family for $f$, denoted by $\mathcal{D}$, such that $[\underline{r}(\mathcal{D}),\overline{r}(\mathcal{D})]\subset (\rho_m,\rho_M)$, the following is true: for any $g$ sufficiently $C^1$-close to $f$, there exists a $(\theta',k+2)$-spanning $c$-family for $g$, denoted by $\mathcal{D}'$, such that $[\underline{r}(\mathcal{D}'),\overline{r}(\mathcal{D}')]=[\underline{r}(\mathcal{D}),\overline{r}(\mathcal{D})]$, $n(\mathcal{D}')=n(\mathcal{D})$. Indeed, we can apply Lemma \ref{lemma stability spanning} successively for each $\mathcal{C}\in \mathcal{D}$. Moreover, we can also ensure that for each $\mathcal{C}'\in \mathcal{D}'$, we have $\mathcal{C}'\subset (\mathcal{C},\sigma)$ and $\theta''\mathcal{C}'\subset (\theta''\mathcal{C},\sigma)$ for some $c$-disk $\mathcal{C}$ in $\mathcal{D}$. 
\end{rema}

\subsection{Extended map and center subspaces}

Recall that in this subsection, \hyperref[(H1)]{(H1)} holds. Let $\bar{\chi}^{c},\hat{\chi}^{c}, \bar{\chi}^{s}, \bar{\chi}^{u}$ be as in Definition \ref{def partially hyperbolic} so that \eqref{ph ineq 1} to \eqref{ph ineq 4} are satisfied. Let $\xi > 0$ be a constant such that
\begin{equation}\label{definition constant xi}
 \min(\bar{\chi}^{s}+\bar{\chi}^c, \bar{\chi}^{u} - \hat{\chi}^{c},\bar{\chi}^s,\bar{\chi}^u)> \xi.
\end{equation}

 \begin{lemma}\label{lemma T deformation}
\label{ph for T} 
Let $I \in \mathbb{N}_{\geq 1}$,  $a \in \R^{I}$ and let $U \subset \R^I$ be an open neighbourhood  of $a$. Let $\hat{f} \colon U \times X \to X$ be a $C^{r}$ deformation at $(a,f)$ with $I$-parameters. If $U$ is chosen sufficiently small, then 
the map $T=T(\hat f)$ is a $C^r$ dynamically coherent partially hyperbolic system for some $T$-invariant splitting 
\begin{equation*}
T_{b}U \oplus T_{x}X = E^{s}_{T}(b,x) \oplus E^{c}_{T}(b,x) \oplus E^{u}_{T}(b,x),\quad \forall\, (b,x) \in U \times X.
\end{equation*}
Moreover, for any $(b,x) \in U \times X$,   we have
\begin{gather}\label{prop subspace def}
E^{*}_{T}(b,x) = \{0\} \oplus E^{*}_{\hat{f}(b,\cdot)}(x),\qquad  \cW_T^{*}(b,x)=\{b\} \times \cW_{\hat f(b,\cdot)}^{*}(x),\qquad \text{for } *=u,s, \nonumber\\ 
\text{and} \quad E^{c}_{T}(b,x) = Graph(\nu_b(x,\cdot)) \oplus E^{c}_{\hat{f}(b,\cdot)}(x),
\end{gather}
for a unique linear map $\nu_b(x,\cdot) \colon T_bU \to E^{su}_{\hat{f}(b,\cdot)}(x):=E^{s}_{\hat{f}(b,\cdot)}(x)\oplus E^{u}_{\hat{f}(b,\cdot)}(x)$.

If in addition \hyperref[(H2)]{(H2)} holds, then, after reducing the size of $U$, 
$u,s$-holonomy maps between center leaves of $T$ (within distance $1$) are $C^{1}$ 
when restricted to some center-unstable/center-stable leaf, with uniformly continuous, uniformly bounded derivatives. 
\end{lemma}

\begin{proof}  
For small enough $U$,  the map $T$ is a dynamically coherent partially hyperbolic diffeomorphism  (it is $C^1$-close to $(b,x)\mapsto (b,f(x))$). A detailed treatment for this statement can be found in \cite[Section 7]{PSW3}. 

In the following, let $*=u$ or $s$, and let $U$ be small. Then for all $b \in U$, $E_{\hat f(b,\cdot)}^*$ is close to $E_f^*$, and the expansion/contraction rate of $\hat f(b,\cdot)$ along $E_{\hat f(b,\cdot)}^*$ is close to that of $f$ along $E_f^*$. For any  $(b,x) \in U \times X$ and $B+v \in T_b U \oplus T_x X$, 
we have
\begin{equation*}\label{eq derivee dtvb}
DT((b,x),B+v)=B+ [ \partial_b \hat f((b,x),B) + \partial_x\hat f((b,x),v)].
\end{equation*}

Then $DT(b,x)$ maps $\{0\}\oplus E_{\hat f(b,\cdot)}^*(x)$ to $\{0\}\oplus E_{\hat f(b,\cdot)}^*(\hat f(b,x))$, which gives $E^{*}_{T}(b,x) = \{0\} \oplus E^{*}_{\hat{f}(b,\cdot)}(x)$. 
It is direct to check that $\{b\}\times \mathcal{W}_{\hat f(b,\cdot)}^*(x)$ integrates $E_T^*$ restricted to $\{b\}\times X$. Thus  $\cW_T^*(b,x)=\{b\}\times \mathcal{W}_{\hat f(b,\cdot)}^*(x)$. Moreover, it is clear that $\{0\}\oplus E_{\hat f(b,\cdot)}^c(x)\subset E_T^c(b,x)$, and $E_T^c(b,x)\cap (\{B\}\oplus T_x X) \neq \emptyset$ for any $B \in T_b U$. We define $\nu_b(x,B)$ as the unique vector in $E_{\hat f(b,\cdot)}^{su}(x)$ such that $B+\nu_b(x,B)\in E_T^c(b,x)$. It is direct to see that $\nu_b(x,\cdot)$ is a linear map. 

Now, if $r \geq 2$ and $f$ is center bunched, by $C^{1}$-openness of center bunching, for sufficiently small $U$, we can verify that $T^n$ is also center bunched for some $n \in \mathbb{N}$. The smoothness of $s,u$-holonomy maps of $T$ follows from Proposition \ref{cor smooth holonomy maps}.
\end{proof}

Let $U$, $\hat f$ and $T$ be as in Lemma \ref{lemma T deformation}. 
In the following, for any $(b,x) \in U \times X$, we will tacitly use the inclusions $E^{*}_{\hat{f}(b,\cdot)}(x) \hookrightarrow \{0\} \oplus E^{*}_{\hat{f}(b,\cdot)}(x) \subset T_{b}U\oplus T_{x}X$ for $* = s,u,c$, and the isomorphism  $\R^{I} \simeq T_bU \oplus \{0\} \subset T_bU \oplus T_xX$.

For any $(b,x) \in U\times X$ and $v=B+v' \in T_{b}U \oplus T_{x}X$, we denote by $\pi_{X}(v):=v'$ the component  in $T_{x}X$, and set $\pi_b(v):=B+\nu_b(x,B)$. We also denote by $\pi_*(v)$ the component of $v$ in $E_{\hat f(b,\cdot)}^*$ for $*=u,s,c$. By a slight abuse of notation, we let $\pi_X(b,x):=x$.

We introduce the following definitions, motivated by the need to control return times of a map to the support of a deformation. 

\begin{defi} \label{def rec funtion}
	For any subsets $A, B \subset X$, we define
	\begin{align*}
	R(f, A, B) &:= \inf \{ n \geq 0 \ \vert \ f^{n}(A) \cap B \neq \emptyset \mbox{ or } f^{-n}(A) \cap B \neq \emptyset\};\\
	R_{\geq 0}(f, A, B) &:= \inf \{n \geq 0 \ \vert\ f^{n}(A) \cap B \neq \emptyset  \}; \\
	 R_{\star}(f, A, B) &:= \inf \{ n \geq 1 \ \vert\ f^{\star n}(A) \cap B \neq \emptyset \},\quad \star=+,-.
	\end{align*}
	For any subset $A \subset X$, we use the abbreviation $R_{\star}(f,A):= R_{\star}(f,A,A)$, $\star=+,-$. 
	
	Similarly, for a $C^1$ deformation of $f$, $\hat{f} \colon U \times X \to X$, we set
	\begin{equation*}
	R_{\star}(\hat{f}, A, B):= \inf \{ n \geq 1 \ \vert\ \exists\, b \in U \mbox{ such that } \hat{f}(b,\cdot)^{\star n}(A) \cap B \neq \emptyset \}, \quad \star = +, -.
	\end{equation*}
	We define $R(\hat{f}, A,B)$, $R_{\geq 0}(\hat{f}, A, B)$, $R_{+}(\hat{f},A)$ and $R_{-}(\hat{f},A)$  in an analogous way.
	Moreover, it is clear that $R(\hat{f},A,B) = \min(R_{\geq 0}(\hat{f}, A, B), R_{-}(\hat{f}, A, B))$.
\end{defi}

In the following, for $*= s,u$, and for any $p \in M$, we set 
\ary \label{def of chikstar}
\bar{\chi}_k^*(p):= \begin{cases}
	\sum_{j=0}^{k-1} \bar{\chi}^*(f^j(p))>0, & \forall\, k \geq 0, \\
	\sum_{j=k}^{-1} \bar{\chi}^*(f^j(p))>0, & \forall\, k <0.
\end{cases} 
\eary
We  define $\bar{\chi}_k^c(p)$ and $\hat{\chi}_k^c(p)$ in a similar way.

The following lemma collects some basic properties of the center bundle $E_T^c$.



\begin{lemma} \label{lem property of nu}
	We have
\enmt
\item\label{lem trans v outside support}
For any $x \in X \backslash \mathrm{supp}_{X}(\hat{f})$, any $B \in T_0U$,  
\begin{equation*}
DT(B + \nu_0(x,B)) = B + \nu_0(f(x),B).
\end{equation*}
Equivalently, $Df(x, \nu_0(x,\cdot))=\nu_0(f(x), \cdot)$.  
\item\label{lem a priori bound for nu}
$\sup_{x \in X} \norm{\nu_0(x, \cdot)} \leq C_1 \norm{T}_{C^1}$
for a $C^1$-uniform 
constant $C_1 = C_1(f) > 0$.
\item \label{lem apriori-refined est for nu}
There is a $C^1$-uniform constant $C_2=C_2(f)>0$  s.t. $\forall\, (x,B) \in X \times T_0U$,
\begin{equation*}
\norm{\nu_0(x, B)} \leq C_2\|T\|_{C^1} e^{- \kappa(\hat f, x)}\norm{B},
\end{equation*}
where  for $w \in M$, we let
\begin{equation} \label{first line}
\kappa(\hat f, w):=\min (\bar\chi^u_{R_{\geq 0}(\hat{f}, w, \mathrm{supp}_X(\hat{f}))}(w), \bar\chi^s_{-R_{-}(\hat{f}, w, \mathrm{supp}_X(\hat{f}))}(w))>0. 
\end{equation}
\eenmt
\end{lemma}
\begin{proof}
Proof of \eqref{lem trans v outside support}:
For any $x \in X \backslash \mathrm{supp}_{X}(\hat{f})$, any $B \in T_0U$,  we have  $DT((0,x), B) = B$.
We have $DT(\nu_0(x,B)) \in E^{su}_T(0,f(x))$ and by \eqref{prop subspace def}, $DT(B + \nu_0(x,B)) \in  E^{c}_{T}(0,f(x))$.
Thus $$DT(B + \nu_0(x,B)) \in (B+E^{su}_T(0,f(x))) \cap E^{c}_{T}(0,f(x)),$$ 
while the right hand side contains only $B + \nu_0(f(x), B)$.

Proof of \eqref{lem a priori bound for nu}: 
For any $(x, B) \in X\times  T_0 U$, the unstable part of $\nu_0(x,B)$ satisfies
\begin{equation*}
\pi_{u}\nu_0(x,B) = -\sum_{n=1}^{+\infty}Df^{-n} (f^{n}(x),\pi_u\partial_b\hat{f}((0,f^{n-1}), B)).
\end{equation*}
Then $\|\pi_u \nu_0\|\leq C_1 \|T\|_{C^1}$ by $\|\partial_b \hat f\|\leq \|T\|_{C^1}$ and $\|Df^{-1}|_{E_f^u}\|<1$. 
Arguing similarly for the stable part, we conclude the proof.

Proof of \eqref{lem apriori-refined est for nu}:
By \eqref{lem trans v outside support}, for any $x \in X$ and $0 \leq n < R_{-}(\hat{f}, \{ x \}, \mathrm{supp}_X(\hat{f}))$, we have 
\begin{equation*}
\pi_s\nu_0(x,B) = Df^{n}(\pi_s\nu_0(f^{-n}(x),B)),\quad \forall\, B\in T_0 U. 
\end{equation*}
By Lemma \ref{lem property of nu}\eqref{lem a priori bound for nu} and \eqref{ph ineq 2}, for some $C^1$-uniform constant $C_1 = C_1(f) > 0$, we thus have  
\begin{equation*}
\norm{\pi_s\nu_0(x,\cdot)} \leq C_1\norm{T}_{C^1} e^{-\bar\chi^s_{-R_{-}(\hat{f}, x, \mathrm{supp}_X(\hat{f}))}(x)}. 
\end{equation*} 
Similarly, we have $\norm{\pi_u\nu_0(x,\cdot)} \leq C_1\norm{T}_{C^1} e^{-\bar\chi^u_{R_{\geq 0}(\hat{f}, x, \mathrm{supp}_X(\hat{f}))}(x)}$. 
\end{proof}

\subsection{Holonomy maps}\label{subs holo map}

Recall that in this section, \hyperref[(H1)]{(H1)} and \hyperref[(H2)]{(H2)} are satisfied. 
In the following, we fix  an integer $I >0$, and let $\hat{f} \colon U \times X \to X$ be a  $C^{2}$ deformation at $(0,f)$ with $I$-parameters.  We set $T = T(\hat{f})$.  In the following, we will always take $U$ conveniently small so that by Lemma \ref{lemma T deformation}, the stable and unstable holonomy maps for $T$ between close center leaves of $T$ are $C^1$.

We need bounds for the derivatives of  holonomy maps with respect to parameters. The following lemma is  proved by combining the construction in \cite[Proof of Theorem A]{PSW} and invariant section theorem for jets in \cite[Proof of Theorem 3.2	]{HPS}. We defer the technical proof to the appendix.

\begin{lemma} \label{prop holon map and nu}
Let $*=u,s$. Take $x \in X$, $y \in \cW^{c*}_{f}(x,\sigma_f/2)$ and set $z:= H^{*}_{f,x,y}(x)$. 
\begin{enumerate}
\item \label{lemma first coordinate}
For any $B \in T_0U$, we have 
\begin{equation*}
\pi_bDH^*_{T, (0,x), (0,y)}(B + \nu_0(x,B)) = B + \nu_0(z,B).
\end{equation*}
\item\label{lemma cent coordinate}
There exists a $C^2$-uniform constant $C_3 = C_3(f) > 0$ s.t. for any $B \in T_0U$, 
\begin{align*}
&\norm{\pi_cDH^*_{T,(0,x),(0,y)}(B + \nu_0(x,B))} \\ 
&\leq C_3 (\max(e^{- \kappa(\hat f,x)},e^{- \kappa(\hat f,z)}) \norm{DT}+ \norm{D^2T}d_{\cW_f^*}(x,z)) \norm{B}.
\end{align*}
\end{enumerate}
\end{lemma}
\begin{proof}
	In Appendix \ref{app a}.
\end{proof}

 The following proposition provides fine control of the derivatives of  holonomy maps with respect to parameters when we are given  certain recurrence condition. We give some illustration in Figure \ref{fig 1}.

\begin{prop} \label{refined est}
There exists a $C^2$-uniform constant $C_4=C_4(f) > 0$ such that the following is true. Fix any $R_0,C> 0$, $\sigma \in (0, \sigma_f)$. Assume that $\hat{f}$ is generated by $V$, an infinitesimal $C^{2}$ deformation such that
 $\sigma \norm{\partial_b \partial_xV}_{X} + \norm{\partial_bV}_{X}< C$. Then there exists a $C^1$-uniform constant $\xi' > 0$ such that we have the following:
\begin{enumerate}
\item Let $y \in \cWcu_f(x, \sigma)$ and $z := H^{u}_{f, x, y}(x)$. Assume that $x \notin \mathrm{supp}_{X}(V)$ and $R_{-}(\hat{f}, \{x, z\}, \mathrm{supp}_X(V)) > R_0$.
Then for any $B \in T_0U$,
\begin{equation*}
\norm{\pi_cDH^{u}_{T, (0, x), (0, y)}(B + \nu_0(x,B))- \pi_cV(B, z)} \leq C_4 C^3 e^{-R_0\xi '}\norm{B}.
\end{equation*}
\item Let $y \in \cWcs_f(x, \sigma)$ and $z := H^{s}_{f, x, y}(x)$. Assume that $R_{+}(\hat{f}, \{ x, z\}, \mathrm{supp}_X(V)) > R_0$. Then for any $B \in T_0U$, 
\begin{equation*}
\norm{\pi_cDH^{s}_{T, (0, x), (0, y)}(B + \nu_0(x,B)) } \leq C_4 C^3 e^{-R_0\xi '}\norm{B}.
\end{equation*}
\end{enumerate}
Note that the terms on the RHS of the above inequalities are independent of $\sigma$. 
\end{prop}

\begin{figure}[h] 
	\begin{center}
		\includegraphics [width=12cm]{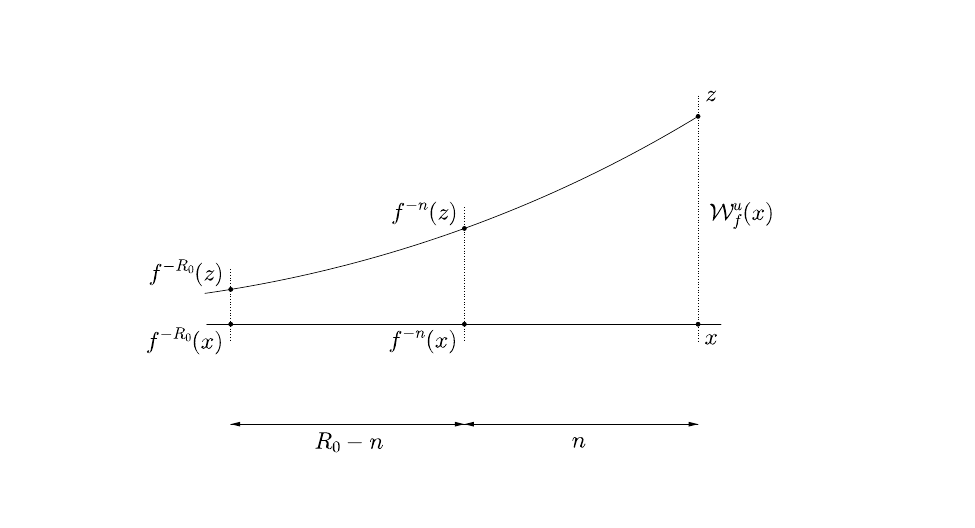}
	\end{center}
	\caption{The point $f^{-j}(x)$ for any integer $1 \leq j \leq R_0$, and the point $f^{-k}(z)$ for any integer $2 \leq k \leq R_0$ lie outside of ${\rm supp}(\hat f)$. We apply Lemma \ref{prop holon map and nu} to $f^{-n}(x)$ and $f^{-n}(z)$ where $n$ is a small fraction of $R_0$.}\label{fig 1}
\end{figure}

\begin{proof}
We first prove (1). Without loss of generality, we assume that 
$R_0$ is sufficiently large.

Let $1 \leq n\leq R_0-1$. Successive application of Lemma \ref{lem property of nu}\eqref{lem trans v outside support} gives
\begin{equation*} \label{D T - 1}
DT^{-n}(B + \nu_0(x,B)) = B + \nu_0(f^{-n}(x),B).
\end{equation*}
Then the invariance of the foliations under the dynamics yields
\begin{align}
&\pi_{c}DH^{u}_{T, (0, x), (0, y)}(B + \nu_0(x,B))  \nonumber  =\pi_{c}DH^{u}_{T, (0, x), (0, z)}(B + \nu_0(x,B))  \nonumber   \\
=\ &\pi_{c}DT^{n}((\pi_b + \pi_{c})DH^{u}_{T, T^{-n}(0, x), T^{-n}(0, z)}DT^{-n}(B + \nu_0(x,B))) \nonumber \\
 =\  &\pi_{c}DT^{n}((\pi_b + \pi_{c})DH^{u}_{T, T^{-n}(0, x), T^{-n}(0, z)}(B + \nu_0(f^{-n}(x), B))),\label{expan1}
\end{align}
where we have used that $DH^{u}_{T, T^{-n}(0, x), T^{-n}(0, z)}(B +  \nu_0(f^{-n}(x),B))\in E_T^c(T^{-n}(0,z))$. 

\begin{claim}
There exists a  $C^2$-uniform constant $c_1=c_1(f) > 0$ such that
\begin{align*}
\norm{\pi_{c}DH^{u}_{T, T^{-n}(0, x), T^{-n}(0, z)}(B + \nu_0(f^{-n}(x), B))} 
\leq c_1 C^2e^{-\min(\bar{\chi}^{u}_n(f^{-n}(x)),\bar{\chi}_{-R_0 + n}^{s}(f^{-n}(x)))} \norm{B}. 
\end{align*}
\end{claim}
\begin{proof}
By $y \in \cWcu_f(x,\sigma)$ and by distortion estimates,  for some $C^1$-uniform constant $c_{2}=c_{2}(f) > 0$, we obtain 
\begin{equation*}
d_{\cWu_f}(T^{-n}(0,x), T^{-n}(0,z)) <  e^{-\bar{\chi}^{u}_{-n}(x)}d_{\cWu_f}(x,z) < c_{2} e^{-\bar{\chi}^{u}_{-n}(x)}\sigma.
\end{equation*}
By Lemma \ref{lem compare T V}, there  exist $C^2$-uniform constants $c_{i}=c_i(f) > 0$, $i=3,4,5$,  so that 
\begin{equation*} 
\norm{DT} < c_3 C, \quad
\norm{D^2T} < c_{4} (1 + \norm{\partial_b \partial_x V}_{X})(1 + \norm{\partial_bV}_{X})  < c_{5} C^2\sigma^{-1}.
\end{equation*}
Recall that $R_-(\hat f, \{x,z\},\mathrm{supp}_X(V)) > R_0>n$  by hypothesis,  thus by \eqref{supp V supp hat f},  
\begin{align*}
R_{\geq 0}(\hat{f}, \{f^{-n}(x), f^{-n}(z)\}, \mathrm{supp}_X(\hat{f})) &\geq n-1, \\
R_{-}(\hat{f}, \{f^{-n}(x), f^{-n}(z) \}, \mathrm{supp}_X(\hat{f})) &\geq R_0 - n. 
\end{align*}
Thus
\begin{equation}  \label{eq longterm1}
\kappa(\hat f,f^{-n}(x)) \geq \min(\bar{\chi}^{u}_{n-1}(f^{-n}(x)),\bar{\chi}_{-R_0 + n}^{s}(f^{-n}(x))).
\end{equation}
Similarly, we have
\begin{equation} \label{eq longterm2}
\kappa(\hat f,f^{-n}(z)) \geq \min(\bar{\chi}^{u}_{n-1}(f^{-n}(z)),\bar{\chi}_{-R_0 + n}^{s}(f^{-n}(z))).
\end{equation}
On the other hand, by distortion estimates, we have
\begin{equation*}
|\mbox{RHS of } \eqref{eq longterm1} - \mbox{RHS of } \eqref{eq longterm2}| < C'
\end{equation*}
for some $C^2$-uniform constant $C' > 0$.
Thus the claim follows from Lemma \ref{prop holon map and nu}\eqref{lemma cent coordinate}.
\end{proof} 
There is a $C^1$-uniform constant $\eta_0 \in (0,1)$  such that for any
integer  $n \in (\eta_0 R_0,  2\eta_0 R_0)$, we have
\begin{equation*}
 \bar{\chi}^{u}_n(f^{-n}(x)) \leq \bar{\chi}_{-R_0 + n}^{s}(f^{-n}(x)).
\end{equation*}
By the above claim, we get 
 \begin{equation} \label{pi c d h}
\norm{\pi_{c}DH^{u}_{T, T^{-n}(0, x), T^{-n}(0, y)}(B + \nu_0(f^{-n}(x), B))} \leq 2c_{1} C^2 e^{-\bar{\chi}^{u}_{-n}(x)}\norm{B}.
\end{equation}

By Lemma \ref{prop holon map and nu}\eqref{lemma first coordinate}, we have
\begin{equation} \label{pi b D H u T}
\pi_{b}DH^{u}_{T, T^{-n}(0, x), T^{-n}(0, z)}(B + \nu_0(f^{-n}(x), B)) =B + \nu_0(f^{-n}(z),B).
\end{equation}

By Lemma \ref{lem property of nu}\eqref{lem trans v outside support}, we also have 
\begin{equation}\label{eq 3.17}
DT^n(B+\nu_0(f^{-n}(z),B))=DT(B+\nu_0(f^{-1}(z),B)). 
\end{equation}

By \eqref{expan1}-\eqref{eq 3.17}, and since for some $C^1$-uniform constant $c_{6}=c_{6}(f) > 0$, 
\begin{equation*}
\norm{DT^{n}(0,\cdot)|_{E^{c}_{T}(w)}}< c_{6} C e^{\hat{\chi}^{c}_n(w)}, \quad \forall\, w \in M,
\end{equation*}
we deduce that for some $C^2$-uniform constant $c_{7}=c_{7}(f) > 0$, it holds 
\begin{align*}
&\norm{\pi_cDH^{u}_{T, (0,x), (0,y)}(B + \nu_0(x,B))  - \pi_cDT(B + \nu_{0}(f^{-1}(z), B)) } \\
\leq \ & c_{7} C^3 e^{\hat{\chi}^{c}_{-n}(x) - \bar{\chi}^{u}_{-n}(x)}\norm{B} \leq c_{7} C^3 e^{-n\xi}\norm{B} \leq c_{7} C^3 e^{-R_0\eta_0\xi}\norm{B}.
\end{align*}
We conclude (1) by setting $\xi' := \eta_0 \xi$ and by using Lemma \ref{lem compare T V}(2), that is,
$$
\pi_c DT(B+\nu_0(f^{-1}(z),B))=\pi_c DT((0,f^{-1}(z)),B)=\pi_c V(B,z). 
$$

Under condition (2), a similar argument (by choosing some $n$ comparable to $R_0$) shows that for some $C^2$-uniform constant $c_8=c_8(f)>0$, we have
\begin{equation*}
\norm{\pi_cDH^{s}_{T, (0,x), (0,y)}(B + \nu_0(x,B)) } \leq c_{8} C^3 e^{-(\bar{\chi}^{s}_n(x) + \bar{\chi}^{c}_n(x) )}\norm{B} \leq c_8 C^3 e^{- R_0 \xi'}\norm{B}.
\end{equation*}
\end{proof}


\section{Submersion from parameter space to phase space}\label{from dim gap}

In this section,
 we will estimate the measure of parameters in a $C^{r}$ deformation corresponding to certain ``unlikely coincidences''. 
First,  we need to estimate the derivatives (with respect to parameters) of holonomy maps along certain $su$-paths.

Throughout this section, we assume that \hyperref[(H1)]{(H1)} and \hyperref[(H2)]{(H2)} hold. 
\begin{defi}\label{def 4 legged}
Given $x \in X$,  a triplet $\gamma = (x_1,x_2,x_3) \in X^3$  is called a \textit{$f$-loop} at $x$  if the following holds:
$$x_1 \in \cWu_{f}(x), \quad x_2 \in \cWs_{f}(x_1), \quad x_3 \in \cWu_{f}(x_2),\quad x \in \cWcs_{f}(x_3).$$
The \textit{length} of $\gamma$ is defined as $$\ell(\gamma):=d_{\cWu_f}(x,x_1) + d_{\cWs_f}(x_1,x_2) + d_{\cWu_f}(x_2,x_3) + d_{\cWcs_f}(x_3,x).$$

 By points \eqref{item 1 definition 1}-\eqref{item 3 definition 1} in Notation \ref{notation 1},
for each $f$-loop $\gamma$ such that $\ell(\gamma) =: \sigma < \frac{\sigma_f}{2}$, we have a well-defined map $H_{f, \gamma} \colon \cWc_f(x, \Lambda_f^{-4}\sigma) \to \cWc_f(x, (1+C_f)\sigma)$:
$$H_{f,\gamma}:= H^s_{f,x_3,x}H^u_{f,x_2,x_3}H^s_{f,x_1,x_2}H^u_{f,x,x_1}.$$

 Let $\hat{f} \colon U \times X \to X$ be a $C^1$ deformation at $(0,f)$, and let $T = T(\hat{f})$.  For any  $f$-loop $\gamma = (x_1, x_2, x_3)$ at $x$,  we define the \textit{lift} of $\gamma$ for $T$ as  $\hat{\gamma}:= ((0,x_1), (0,x_2),(0,x_3))$.

\end{defi}

\begin{nota}\label{notation 2}
 Recall that $c= \dim E^c_f$, and let 
\begin{equation}\label{term 2011}
K_f:=cC_f\Lambda_f^{4c}.
\end{equation}
We fix a $C^2$-uniform constant $\overline{\sigma}_f \in \big(0, \frac{\sigma_f}{100C_fK_f}\big)$ such that
for any $x \in X$, any collection $\{\gamma_{j} = (x_{j,1}, x_{j,2}, x_{j,3})\}_{j=1,\dots,c}$ of $f$-loops at $x$ such that $\ell(\gamma_j) < \overline{\sigma}_f, \forall\, 1 \leq j \leq c$,  the map $\prod_{j=1}^c H_{f, \gamma_j}$ is defined on $\cWc_f(x, \overline{\sigma}_f)$.
In this case, for any $1 \leq k \leq c+1$ we set $y_{k}:= \prod_{l=1}^{k-1} H_{f, \gamma_{l}}(x)$, and for $1 \leq j \leq c$ we define
\begin{equation*}
y_{j,1}:= H^{u}_{f,x,x_{j,1}}(y_j), \quad y_{j,2}:= H^{s}_{f, x_{j,1}, x_{j,2}}(y_{j,1}), \quad y_{j,3}:= H^{u}_{f,x_{j,2}, x_{j,3}}(y_{j,2}).
\end{equation*}
\end{nota}

The next lemma follows from Notation \ref{notation 1}, \ref{notation 2} by straightforward computations. We thus omit its proof. 
\begin{lemma} \label{lem41} 
Let $f,x, \gamma_j, y_j, y_{j,k}$ be  as in Notation \ref{notation 2}. Assume that for some $\sigma \in (0,\overline{\sigma}_f)$, we have $\ell(\gamma_j) < \sigma$, for all $j=1,\dots, c$. Then for any $j=1,\dots,c$, $d_{\cWc_f}(x,y_{j+1})\leq C_f \sigma + \Lambda_f^4 d_{\cWc_f}(x,y_{j})$. Let $y_{j,0}:=y_j$ and $y_{j,4}:=y_{j+1}$. We have
\begin{equation*}
\begin{array}{rcl} 
d_{\cWc_f}(x, y_j), d_{\cWc_f}(x_{j,k}, y_{j,k}) < K_f  \sigma < \frac{\sigma_f}{10}, &\qquad& \forall\, k=1,2,3,\\
d_{\cWu_f}(y_{j,k-1}, y_{j,k}),d_{\cWs_f}(y_{j,k}, y_{j,k+1})  < 3C_fK_f\sigma < \frac{\sigma_f}{10}, &\qquad& \forall\, k=1,3.
\end{array}
\end{equation*}
\end{lemma}

In the following, we let $I>0$ be some integer,  let $V$ be an infinitesimal $C^2$ deformation with $I$-parameters, and  let  $\hat{f}\colon U \times X \to X$ be the $C^{2}$ deformation at $(0,f)$ generated by $V$. Set $T = T(\hat{f})$. Let $\xi'$ be defined as in Proposition \ref{refined est}.

\begin{lemma}  \label{prop linear approx} 
There exists a $C^2$-uniform constant $C_5=C_5(f) > 0$ such that the following is true. Let $\sigma\in (0,\overline{\sigma}_f)$, $x \in X$ and let $\gamma=(x_1,x_2,x_3)$ be a $f$-loop at $x$ with $\ell(\gamma) < \sigma$. Let $x_4:= H_{f, \gamma}(x)$,  and let $C, R_0 > 0$ satisfy that:
\enmt
\item $\sigma\norm{\partial_b\partial_xV}_{X} + \norm{\partial_bV}_{X} < C$;
\item $R(f, \{x, x_{2}, x_{3},x_4\}, \mathrm{supp}_X(V)) > R_0$;
\item $R_{\pm}(f, \{x_1\}, \mathrm{supp}_X(V)) > R_0$.
\eenmt 

Let $\hat{\gamma}$ be the lift of $\gamma$ for $T$.
Then, the holonomy map $ H_{T, \hat{\gamma}}$ is $C^1$ in an open neighbourhood of $(0,x)$ in $\cWc_{f}(x)$, and for any $B \in T_0 U$, we have
\begin{equation*}
\norm{\pi_cDH_{T,\hat{\gamma}}(B + \nu_0(x,B)) - D(H^{s}_{f, x_3, x} H^{u}_{f, x_2, x_3} H^{s}_{f, x_1, x_2})(\pi_cV(B, x_1))} \leq  C_5 C^{3}e^{-R_0\xi'} \norm{B}.
\end{equation*}
\end{lemma}
\begin{proof}  
Let $x:=x_0 \in X$. By definition, we have
\begin{equation*} 
H_{T,\hat{\gamma}}  
= H^s_{T, (0,x_3), (0, x)}H^u_{T,(0,x_2), (0, x_3)}H^s_{T, (0, x_1),(0, x_2)}H^u_{T, (0,x), (0, x_1)}.
\end{equation*}
Since $f$ is center bunched, Lemma \ref{ph for T} implies that $H^{u}_{T, (0,x), (0,x_1)}$, $H^{s}_{T,(0, x_1),(0, x_2)}$, $H^{u}_{T,(0, x_2),(0, x_3)}$ and $H^{s}_{T,(0, x_3),(0, x)}$ are $C^{1}$ if $U$ is small enough. 
Given any $ B \in T_{0}U$, let us calculate 
$DH_{T,\hat{\gamma}}(B + \nu_0(x,B))$. 
Set $I_0(B):= B + \nu_0(x,B) \in E^c_{T}(0, x)$. For $i = 1,\dots, 4$, we define
\begin{equation}\label{eq Ii DH}
I_i(B):= DH^{*_i}_{T, (0, x_{i-1}), (0, x_i)}(I_{i-1}(B))  \in E^{c}_T(0, x_i),
\end{equation}
where $*_i= u$ if $i=1,3$ and $*_i= s$ if $i=2,4$.
In particular, we have $DH_{T,\hat{\gamma}}(B + \nu_0(x,B))  = I_4(B)$.
Then by Lemma \ref{prop holon map and nu}\eqref{lemma first coordinate} and simple induction we deduce that
\begin{equation}\label{recurrence IicB}
I_{i}(B) = I_{i}^{c}(B) + (B + \nu_0(x_i, B)), \quad \forall\, i=1,\dots,4,
\end{equation}
with $I_{i}^{c}(B):= \pi_{c}(I_{i}(B))$. By \eqref{eq Ii DH}, we thus obtain
\begin{align}\label{I i+1 I i}
I_{i}^{c}(B) = DH^{*_i}_{f, x_{i-1}, x_{i}}(I_{i-1}^{c}(B)) + \pi_{c}DH^{*_i}_{T, (0,x_{i-1}), (0,x_{i})}(B + \nu_0(x_{i-1}, B)).
\end{align} 
By the hypothesis we made 
on $V$, 
$x_0, x_2, x_3, x_4 \notin \mathrm{supp}_X(V)$ and $R_{\pm}(f, x_i, \mathrm{supp}_X(V)) > R_0$ for $0 \leq i \leq 4$. We can apply Proposition \ref{refined est} to obtain 
\begin{align*}
\norm{\pi_{c}DH^{u}_{T, (0,x), (0,x_{1})}(B + \nu_0(x, B)) - \pi_cV(x_1,B)}  &\leq C_4 C^3 e^{-R_0\xi'}\norm{B},\\
\norm{\pi_{c}DH^{s}_{T, (0,x_{i-1}), (0,x_{i})}(B + \nu_0(x_{i-1}, B))} &\leq C_4 C^3 e^{-R_0\xi'}\norm{B}, \quad i=2,4,\\
\norm{\pi_{c}DH^{u}_{T, (0,x_2), (0,x_{3})}(B + \nu_0(x_{2}, B))} &\leq C_4 C^3 e^{-R_0\xi'}\norm{B}.
\end{align*}

Combining this with \eqref{recurrence IicB} and \eqref{I i+1 I i}, we see that there exists a $C^2$-uniform constant $C_5=C_5(f) > 0$ such that
\begin{equation*}
\norm{I_4^{c}(B) - \pi_c D(H^{s}_{f, x_3, x} H^{u}_{f, x_2, x_3} H^{s}_{f, x_1, x_2})(\pi_cV(B, x_1)) } \leq C_5 C^{3}e^{-R_0\xi'}\norm{B}.
\end{equation*}
\end{proof}

The following definition is motivated by Lemma \ref{lem41} and Lemma \ref{prop linear approx}.  The next proposition roughly says that if we have enough control on the magnitude of the deformation and on the return times to the support of the perturbation, then we can obtain a lower bound on the determinant of  the differential of a certain  map  from parameter space to phase space (see Figure \ref{fig 2} for an illustration).
This will be important  in the parameter exclusion  which appears in Section \ref{section generic open acc}. 

\begin{defi}\label{defiadaptedtosth}
Given any $ \sigma \in (0, \overline{\sigma}_f)$, $C, R_0 > 0$, let $\gamma= (x_{1},x_{2},x_{3})$ be a $f$-loop at a point $x \in X$ with $\ell(\gamma)<\sigma$.  We say that $V$ is \textit{adapted to $(\gamma, \sigma, C, R_0)$} if:
\enmt
\item  $\sigma\norm{\partial_b\partial_xV}_{X} + \norm{\partial_bV}_{X} < C$;
\item $R(f, \cWc_f(z, K_f\sigma), \mathrm{supp}_X(V)) > R_0$ for $z= x, x_{2}, x_{3}$;
\item $R_{\pm}(f, \cWc_f(x_{1}, K_f\sigma), \mathrm{supp}_X(V)) > R_0$.
\eenmt
\end{defi}

\begin{prop}\label{determinant for smooth deformations} 
For any integer $L  > 0$, real numbers $C, \kappa > 0$,  there exist $C^2$-uniform constants $R_0  = R_0(f, L,c, C, \kappa)> 0$ and $\kappa_0 = \kappa_0(f, L, c, C, \kappa) > 0$ such that the following is true. 

Let $x \in X$, $\sigma \in (0, (12C_fK_f)^{-1}\overline{\sigma}_f)$. For each $1 \leq i \leq L$, $1 \leq j \leq c$, let $\gamma_{i,j} = (x_{i,j,1}, x_{i,j,2}, x_{i,j,3})$ be a $f$-loop  at $x$ of length at most $\sigma$ such that  $V$ is adapted to $(\gamma_{i,j}, \sigma,C, R_0)$. Denote by  $B = (B_{\alpha})_{1 \leq \alpha \leq I}$ an element of $T_{0}U = \R^{I}$. Assume that for some integer $1 \leq j_0 \leq c$, and   indices $\{\alpha_{i,j}\}_{1\leq i \leq L, 1\leq j\leq c } \subset \{1,\dots, I\}$, we have for any $1 \leq i, k \leq L$ and $1 \leq j \leq c$ that: if $i\neq k$ or $j \neq j_0$, then for all $z \in \cWc_f(x_{i,j,1}, K_f 
\sigma)$, we have
\begin{equation} \label{determinant 1}
D_{B_{\alpha_{k,1}}, \dots, B_{\alpha_{k,c}} }(\pi_cV(B, z)) = 0, 
\end{equation}
while for any $z \in \cWc_f(x_{i,j_0,1}, K_f 
\sigma)$, we have
\begin{equation} \label{determinant 2}
\big|\det(B \mapsto D_{B_{\alpha_{i,1}}, \dots, B_{\alpha_{i,c}} }(\pi_cV(B, z)) )\big| > 2\kappa.
\end{equation}

Let $\hat{\gamma}_{i,j}$ be the lift of $\gamma_{i,j}$ for $T$, and set $z_i:= \prod_{j=1}^{ c}H_{f, \gamma_{i,j}}(x)$. Then there exists a linear subspace $H \subset T_{0}U = \R^{I}$ of dimension $Lc$ such that 
\begin{equation*}
\det(\Xi|_{H}) \geq \kappa_0,
\end{equation*}
where we set $\Xi \colon\left\{\begin{array}{rcl}
T_0 U &\to&  \prod_{i=1}^{L} E^c_{f}(z_i),\\
B &\mapsto& \left(\pi_cD(\prod_{j=1}^c H_{T, \hat{\gamma}_{i,j}})(B+\nu_0(x,B))\right)_{i=1,\dots,L}.
\end{array} 
\right.$
\end{prop}

\begin{figure}
	\begin{center}
		\includegraphics [width=10cm]{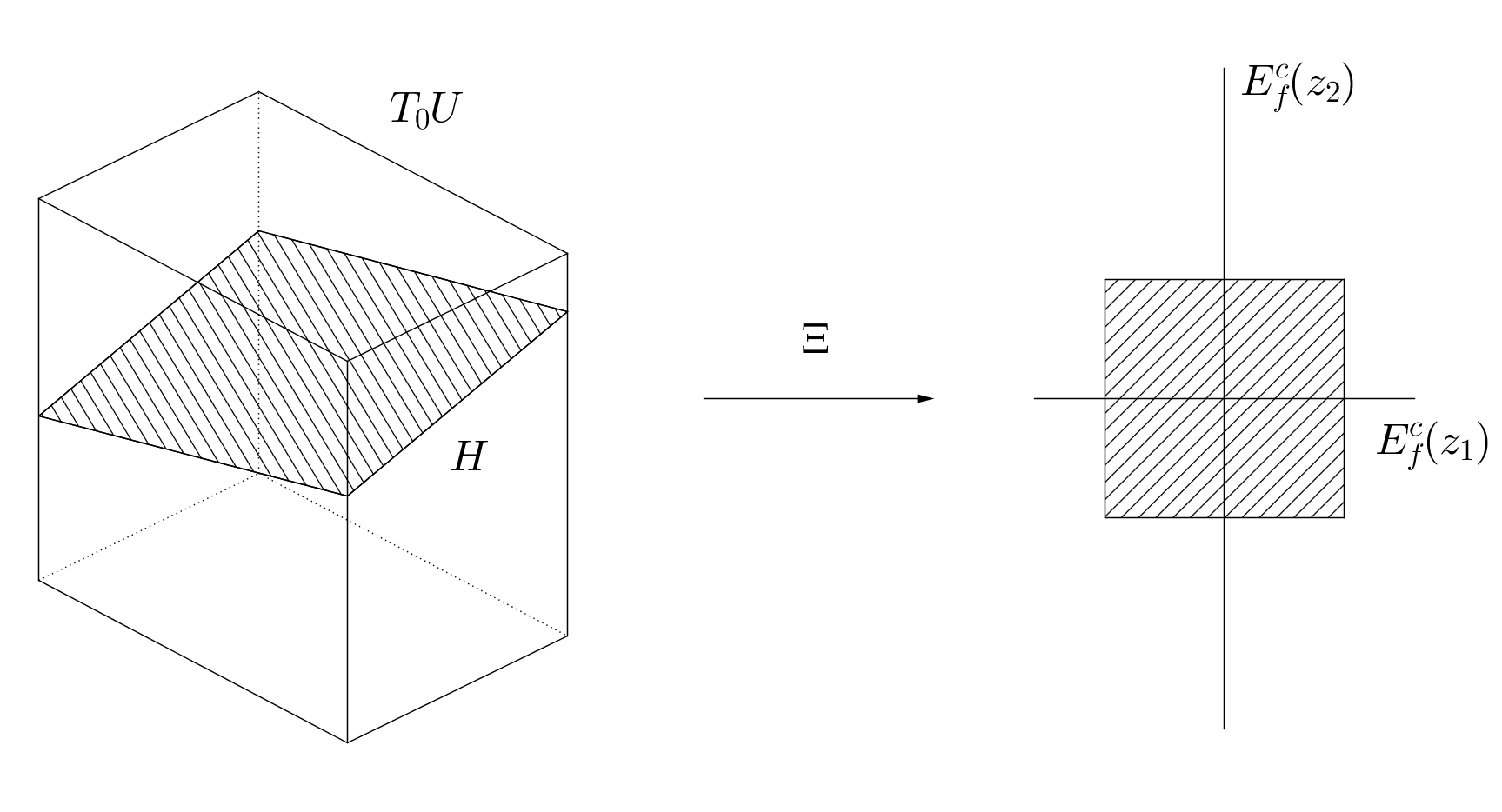}
	\end{center}
	\caption{The map  $\Xi$ is a submersion. Here, the picture is for $L=2$, $H$ is a $2D$ subspace of $T_0 U$, and $\Xi|_{H}$ is invertible. }\label{fig 2}
\end{figure}


\begin{proof} 
For each $1 \leq i \leq L$, and $1 \leq j \leq c$, 
we define
\begin{align*}
y_{i,j}&:= \prod_{l=1}^{j-1} H_{f, \gamma_{i,l}}(x), 
& y_{i,j,1} &:= H^{u}_{f, x, x_{i,j,1}}(y_{i,j}),\\
y_{i,j,2}&:=  H_{f,x_{i,j,1},x_{i,j,2}}^s(y_{i,j,1}),
& y_{i,j,3} &:= H^{u}_{f, x_{i,j,2}, x_{i,j,3}}(y_{i,j,2}).
\end{align*}
By the choice of $\overline{\sigma}_f$ in Notation \ref{notation 2}, Lemma \ref{lem41} yields 
\begin{equation} \label{term 3000}
y_{i,j} \in \cWc_f(x, K_f 
\sigma), \quad y_{i,j,k} \in \cWc_f(x_{i,j,k}, K_f 
\sigma),\quad \forall\, k=1,2,3.
\end{equation}
Denote by $\gamma'_{i,j} = (y_{i,j, 1}, y_{i,j, 2}, y_{i,j, 3})$ the associated $f$-loop at $y_{i,j}$. Note that we also have $z_i \in \cW_f^c(x,K_f\sigma)$, for all $1 \leq i \leq L$. 

By assumption, for any $1 \leq i \leq L$, $1 \leq j \leq c$, we have $\ell(\gamma_{i,j})<\sigma$, thus by Lemma \ref{lem41}, $\ell(\gamma'_{i,j}) \leq 12 C_f K_f\sigma < \overline{\sigma}_f$. 
Since $V$ is $(\gamma_{i,j}, \sigma, C, R_0)$-adapted, we get
\begin{equation*}
 R(f, \{y_{i,j}, y_{i,j,2}, y_{i,j,3}\}, \mathrm{supp}_X(V))>R_0,\quad  R_{\pm}(f, \{y_{i,j,1}\}, \mathrm{supp}_{X}(V)) > R_0.
\end{equation*}

Let $H := \oplus_{i=1}^{L} \oplus_{j=1}^{c} \R \partial_{B_{\alpha_{i,j}}}$.
Define $\Xi_H \colon H \to \prod_{i=1}^{L}E^c_{f}(z_i)$ by
\begin{equation*}
\Xi_H(B) = \big(\pi_cD\big(\prod_{j=1}^{c}H_{T, \hat{\gamma}_{i,j}}\big)(B +\nu_0(x,B))\big)_{i=1,\dots,L}.
\end{equation*}
By Lemma \ref{prop holon map and nu}\eqref{lemma first coordinate}, we have for any $1 \leq i , k \leq L$:
\begin{equation*}
D_{B_{\alpha_{k,1}}, \dots, B_{\alpha_{k,c}}}\big(\pi_c D\big(\prod_{j=1}^{c}H_{T, \hat{\gamma}_{i,j}}\big)(B + \nu_0(x,B))\big) = \sum_{l=1}^{c} I_{i,k,l}(B),
\end{equation*}
where for each $1 \leq l \leq c$, we set
\begin{equation*}
I_{i,k,l}(B):=  D_{B_{\alpha_{k,1}}, \dots, B_{\alpha_{k,c}}}\big(\pi_cD\big(\prod_{j=l+1}^{c}H_{T, \hat{ \gamma}_{i,j}}\big)(\pi_c DH_{T, \hat{\gamma}_{i,l}}(B + \nu_0(y_{i, l}, B)))\big).
\end{equation*}
It is clear that for all $1 \leq i \leq L$, $1 \leq l \leq c$, 
\begin{equation*}
\pi_c DH_{T, \hat{\gamma}_{i,l}}(B + \nu_0(y_{i, l}, B)) = \pi_c DH_{T, \hat{\gamma}'_{i,l}}(B + \nu_0(y_{i, l}, B)),
\end{equation*}
where $\hat{\gamma}'_{i,l}$ denotes the lift of $\gamma'_{i,l}$ for $T$.
Since $\gamma'_{i,l}$ and $V$ satisfy the conditions of Lemma \ref{prop linear approx} with $(\sigma,C)$ replaced by $(12C_fK_f\sigma,12 C_f K_f C)$,  there exists a $C^2$-uniform constant $c_{1}=c_1(f) > 0$ (we absorb  the term $12C_fK_f$ in $c_1$) so that 
\begin{gather*}
 \big|I_{i,k,l}(B)  -  D_{B_{\alpha_{k,1}}, \dots,B_{\alpha_{k,c}} } \big(D\big(\prod_{j=l+1}^{c}H_{f, \gamma_{i,j}}\cdot H^{s}_{f, x_{i, l,3}, x}H^{u}_{f, x_{i, l,2}, x_{i,l,3}}H^{s}_{f, x_{i, l,1}, x_{i,l,2}} \big) \\ 
 \cdot \big(\pi_cV(B, y_{i,l,1})\big)\big) \big| \leq c_{1} C^{3}e^{-R_0 \xi'} \|B\|.
\end{gather*} 
Let $1 \leq i,k \leq L$ and $1 \leq j \leq c$. If $  i \neq k $ or $ j \neq j_0 $, then by \eqref{term 3000},  $z= y_{i,j,1}$ satisfies \eqref{determinant 1}, hence $
|I_{i,k,j}| < c_{2} C^3 e^{-R_0 \xi'}$. Again  by \eqref{term 3000}, $z= y_{i,j_0,1}$ satisfies \eqref{determinant 2}, hence
\begin{equation*}
|\det (I_{i,i,j_0})| > c_{3}\kappa - c_{4} C^3 e^{-R_0 \xi'}.
\end{equation*}
Here $c_{2}, c_{3}, c_{4}>0 $ are $C^2$-uniform constants depending only on $f, c$.

Thus for some  $C^2$-uniform constant $c_{5}>0$ depending only on $f, L,c, C$, for any  sufficiently large $R_0$ depending only on $f, L,c, C,\kappa$, we have $\det(\Xi_H) > c_{5} \kappa^{L}$.
 Moreover, it is easy to see that $R_0$ is $C^2$-uniform with respect to $f$.
Then $
\kappa_0:= c_{5} \kappa^{L}$ depends only on $ f, L,c, C,\kappa$ and is $C^2$-uniform in $f$.  This concludes the proof.
\end{proof}

\section{Finding suitable spanning $c$-families}\label{section spanning}

Since we will study a parametrised family of diffeomorphisms, we need a bit more work to find suitable $c$-families.
Let us start by recalling a result presented in \cite[Lemma 1.2]{DW}. We use here the notations introduced in Subsection \ref{subsection c disks}.

\begin{lemma}[Accessibility modulo central disks]\label{lem acc mod den dis}
Let $f \in \cPH^{1}(X)$ be dynamically coherent. Assume that the fixed points of $f^{k}$ are isolated, for all $k\geq 1$. Then for every integer $\overline{R} > 0$ there exists $\cD = \cD(f, \overline{R})$, a $c$-family for $f$ such that 
\begin{equation*}
(1)\ \overline{r}(\cD) < \overline{R}^{-1},\qquad 
(2)\ R_\pm(f, \cD) > \overline{R},\qquad
(3)\ \cD \text{ is } (\frac{1}{80},  2)\text{-spanning}.
\end{equation*}
\end{lemma}

For the convenience of the choices of some constants, we replaced the constant $\frac{1}{2}$ in \cite[Lemma 1.2]{DW} by $\frac{1}{80}$. This does not introduce any new difficulty into the proof. 
The following is a consequence of the above lemma.
\begin{cor}\label{cor acc mod den dis} 
Assume that $f \in \cPH^1(X)$ is dynamically coherent, and the fixed points of $f^{k}$ are isolated for all $k\geq 1$.  Then for every $\overline{R} > 0$, there exist $C^1$-uniform constants $N = N(f, \overline{R})>0$, $\rho = \rho(f, \overline{R})\in(0,\overline{R}^{-1})$ and $\sigma = \sigma(f, \overline{R})>0$ such that the following is true. For all $g$ sufficiently $C^1$-close to $f$, there exists $\cD_g$, a $(\frac{1}{40},  4)$-spanning $c$-family for $g$ such that   Lemma \ref{lem acc mod den dis}$(1)$ is satisfied  for $(\cD_g,g)$ in place of $(\cD, f)$. Moreover, we have
\begin{equation*}
(1)\ \underline{r}(\cD_g) > \rho,\quad
(2)\ n(\cD_g) < N,\quad
(3)\ \cD_g \text{ is } \sigma\text{-sparse},\quad
(4)\ R_\pm(g, (\cD_g, \sigma)) > \overline{R}.
\end{equation*}

\end{cor}
\begin{proof}
Let $\mathcal{D}=\cD(f, 2\overline{R})$ be a $(\frac{1}{80},  2)$-spanning $c$-family for $f$  given by Lemma \ref{lem acc mod den dis}. Set $N:=n(\cD)+1$. Take $\rho \in (0, (2\overline{R})^{-1})$ such that $\underline{r}(\cD)>\rho$, and choose $ \sigma>0$ so that (3), (4) above are true for $(\cD, f, 4\sigma)$ in place of $(\cD_g, g, \sigma)$. Then for any $g$ sufficiently $C^1$-close to $f$, for any $c$-family  $\mathcal{D}_g$ for $g$ with $(\mathcal{D}_g,0)\subset (\mathcal{D},\sigma)$, items (3),(4) above are satisfied for $(\mathcal{D}_g,g)$. 

By Lemma \ref{lemma stability spanning} applied to $f$ and $(\theta,\theta',\theta'',\rho_m,\rho_M)=(\frac{1}{80},\frac{1}{40},\frac{1}{40},\rho,\overline{R}^{-1})$, and by Remark \ref{remarque trois}, we see that there exists a $\big(\frac{1}{40},4\big)$-spanning $c$-family $\mathcal{D}_g$ for $g$, satisfying Lemma \ref{lem acc mod den dis}(1), Corollary   \ref{cor acc mod den dis}(1),(2), and $(\mathcal{D}_g,0)\subset (\mathcal{D},\sigma)$. Combined with the previous discussions, this concludes the proof. 
\end{proof}

While working with a family of diffeomorphisms, we will need to consider several $c$-families. 
For that purpose, we use  a superposition of a collection of perturbations which are localized   in parameter-phase space. The following proposition will allow us to arrange their support in such a way that the interferences between them are very weak; it will serve as a key step in the inductive construction of these localized perturbations in Proposition \ref{lower bound determinant 2}. 

\begin{prop} \label{prop slow recurrent}
Let $r\in \mathbb{N}_{\geq 2} \cup \{\infty\}$, $J\geq 1$, and let $\{f_{a}\}_{a \in [0,1]^{J}}$ be a good (see Definition \ref{defregularfamilyofdiff}) $C^{r}-J-$family   in the space of dynamically coherent,  $C^r$ partially hyperbolic diffeomorphisms. Then for any integers $K,R_0 \geq 1$, any real numbers $\vartheta > 0$, $h_0 > 0$, there exists  a  set $\Omega_1$ compactly contained in $[0,1]^{J}$ with $\mathrm{Leb}([0,1]^{J} \backslash \Omega_1) < \vartheta$, an integer $N_0 > 1$, and real numbers $\rho_0 \in (0, h_0)$, $\rho_1 \in (0, \rho_0)$, $\sigma_0,\lambda_0 > 0$ such that the following is true.

 Take any $a \in \Omega_1$, and any integer $0 \leq l \leq K-1$. For any collection of points $\{a_i\}_{i=1}^l \subset B(a,\lambda_0) \cap [0,1]^{J}$, any $1 \leq i \leq l$, let $\cD_i$ be a $c$-family for $f_{a_i}$ satisfying 
\begin{equation*}
(1)\ [ \underline{r}(\cD_i) , \overline{r}(\cD_i) ] \subset (\rho_1, \rho_0);\qquad
(2)\ n(\cD_i) < N_0.
\end{equation*}
Then there exists a $(\frac{1}{20}, 6)$-spanning $c$-family for $f_a$, denoted by $\cD_{l+1}$, such that $(1)$, $(2)$ above are satisfied for $i = l+1$, and moreover,
\enmt
\item $\cD_{l+1}$ is $\sigma_0$-sparse, and $(\cD_{l+1}, \sigma_0)$ is disjoint from $(\{\cD_{i}\}_{i=1}^{l},\sigma_0)$;
\item for any $a' \in B(a, \lambda_0) \cap [0,1]^{J}$, we have
\begin{align*}
&\bullet\ R(f_{a'}, (\cD_{l+1}, \sigma_0), (\{\cD_{i}\}_{i=1}^l, \sigma_0))>R_0;\\
&\bullet\ R_{\star}(f_{a'}, (\cD_{l+1}, \sigma_0))   > R_0,\qquad \star=+,-.
\end{align*}
\eenmt
\end{prop}
\begin{proof}
We choose $\Omega_1$ to be any compact set contained in $\mathrm{int}([0,1]^{J})$ such that $\mathrm{Leb}([0,1]^{J} \backslash \Omega_1) < \vartheta$, and for any $a \in \Omega_1$, the fixed points of $f_a^{k}$ are isolated for any integer $k \geq 1$. The existence of $\Omega_1$ is guaranteed by our hypothesis that $\{f_a\}_{a\in [0,1]^{J}}$ is a good family.

By the compactness of $[0,1]^{J}$, there exists $\rho_2 \in (0, h_0)$ such that for any $a \in [0,1]^{J}$, $x \in X$, the tangent space of $\cWc_{f_a}(x,4\rho_2)$ is sufficiently close  to $E^c_{f_a}(x)$ so that for any $y \in B(x, \rho_2)$, $\cWc_{f_a}(y, 4\rho_2)$ intersects $B(x, \rho_2)$ in a single local center manifold.

Let $\rho_0\in (0,  \frac{\rho_2}{2})$ be small enough so that for any $a \in [0,1]^{J}$, any $x \in X$, we have
\begin{align}
f_a^{p}(\cWc_{f_a}(x, \rho_0)) &\subset \cWc_{f_a}(f_a^{p}(x),\rho_2),\quad \forall\, -R_0 \leq p \leq R_0.\label{condd 2}
\end{align}
By Corollary \ref{cor acc mod den dis} applied to $\overline{R} > \max(\rho_0^{-1}, R_0)$, and by the compactness of $\Omega_1$, there exist $N_0>0$, $\rho_1\in (0,\rho_0)$, $\sigma_1\in (0,\rho_2)$ such that for all $a \in \Omega_1$ there exists a $(\frac{1}{40},  4)$-spanning $c$-family for $f_a$, denoted by $\widetilde{\cD}(a)$, such that $[\underline{r}(\widetilde{\cD}(a)) , \overline{r}(\widetilde{\cD}(a))] \subset (\rho_1, \rho_0)$, $n(\widetilde{\cD}(a)) < N_0$, and 
\begin{equation}\label{eq sparse}
\widetilde{\cD}(a)\text{ is } \sigma_1\text{-sparse},\quad \text{and}\quad R(f_a, (\widetilde{\cD}(a), \sigma_1)) > R_0.
\end{equation} 

Take $\sigma_3  > 0$ sufficiently small such that for any $a \in [0,1]^{J}$, $x \in X$, $y \in B(x,\sigma_3)$, and any $\rho \in (\rho_1, \rho_0)$, we have $\cWc_{f_a}(y,\rho) \subset (\cWc_{f_a}(x,\rho), \sigma_1/2)$ and\footnote{As in the case of \eqref{eq lem c family}, here \eqref{proj acc class} follows from the uniform transversality of $\cW_{f_a}^{cs}$ and $\cW_{f_a}^u$, respectively $\cW_{f_a}^{cu}$ and $\cW_{f_a}^s$.}
\begin{equation}\label{proj acc class}
 \cWc_{f_a}\big(x, \frac{1}{40}\rho\big) \subset \bigcup_{z \in \cWc_{f_a}(y, \frac{1}{20}\rho)} Acc_{f_a}(z, 1, 2).
\end{equation}

Take $\sigma_2 > 0 $  such that for any $a \in [0,1]^{J}$, any $x\in X$, any collection of $3R_0N_0K$ points  $\{x_{i}\}_{i=1}^{3R_0 N_0 K}\subset X$, there exists $y \in B(x, \sigma_3)$ such that  for all $1 \leq i \leq 3R_0N_0K$, we have $d(\cWc_{f_a}(y,\rho_2), \cWc_{f_{a}}(x_{i}, \rho_2)) > 3\sigma_2$.

Take a small constant $\lambda_0 > 0$ such that for any $a \in [0,1]^{J}$, $a' \in B(a,\lambda_0) \cap [0,1]^{J}$, $x \in X$, and any $-R_0 \leq p \leq R_0$, we have 
\begin{equation}\label{cond fpa disk} 
f^{p}_a(\cWc_{f_{a'}}(x, \rho_0)) \subset (f^{p}_a(\cWc_{f_{a}}(x, \rho_0)),  \sigma_2).
\end{equation}

Fix any $a \in \Omega_1$. We denote $\widetilde{\cD}:= \widetilde{\cD}(a)=\{\widetilde{\mathcal{C}}_1,\dots,\widetilde{\mathcal{C}}_{N_1}\}$ for some $N_1 < N_0$. We take $l, \{a_i\}_{i=1}^{l}$ and $\{\cD_i\}_{i=1}^{l}$ as in the proposition. 
We will modify $\widetilde{\cD}$ to obtain $\cD_{l+1}$ that satisfies the conclusion of the proposition. 

We will define $\cC_{l+1,1},\dots, \cC_{l+1, N_1}$ by induction so that $\cD_{l+1} = \{\cC_{l+1,1},\dots, \cC_{l+1, N_1}\}$ is a $(\frac{1}{20},  6)$-spanning $c$-family for $f_a$. Let $0 \leq j \leq N_1-1$ be an integer such that for all $1 \leq k \leq j$, $\cC_{l+1,k}$ is  defined and satisfies $\varrho(\cC_{l+1,k}) \in (\rho_1, \rho_0)$, 
\ary\label{term 2000}
&& \mathcal{C}_{l+1,k}\subset(\widetilde{C}_k,\sigma_1/2),\\
&& \frac{1}{40}\widetilde{\cC}_{k} \subset \bigcup_{x \in \frac{1}{20}\cC_{l+1, k}}  Acc_{f_a}(x, 1, 2), \label{term 200bis} \\
\mbox{ and } &&(\cC_{l+1,k}, \sigma_2) \cap (\cC', \sigma_2) = \emptyset, \quad \forall\, 
\cC'\in \cal M_k,\label{construction inductive dl1} 
\eary
where
\begin{equation*}
\cal M_k:= \bigcup_{\substack{1\leq i \leq l,\ \cC \in \cD_i, \\ -R_0 \leq p \leq R_0}} f_a^{p}(\cC)\  \cup\ \bigcup_{\substack{1 \leq m \leq k-1,\\ -R_0 \leq p \leq R_0}}f_a^{p}(\cC_{l+1, m}).
\end{equation*}

The above is true for $j=0$. 
By the choices of $\rho_0$ and $\lambda_0$, by \eqref{condd 2} and \eqref{cond fpa disk}, for any $1 \leq i \leq {l}$, $\cC \in \cD_i$, and $-R_0 \leq p \leq R_0$,
there exists $x \in X$ such that $f^{p}_a(\cC) \subset (f^{p}_a(\cWc_{f_a}(x,\rho_0)), \sigma_2) \subset (\cWc_{f_a}(f^{p}_{a}(x), \rho_2),\sigma_2)$. Similarly, for each $1\leq m \leq j, -R_0 \leq p \leq R_0$,  there exists $x \in X$ such that $f^{p}_a(\cC_{l+1,m}) \subset \cWc_{f_a}(x, \rho_2)$.
Thus $\mathcal{M}_{j+1}$ is contained in less than $3R_0 N_0 K$ many $\sigma_2$-neighbourhoods of $c$-disks for $f_a$ of radius $\rho_2$. 
Then by 
\eqref{condd 2}, \eqref{proj acc class},  and the choice of $\sigma_2$, there exists a center disk $\cC_{l+1, j+1}= \cWc_{f_a}(y, \rho')$ for some $\rho'\in (\rho_1,\rho_0)$, satisfying  \eqref{term 2000}, \eqref{term 200bis} and \eqref{construction inductive dl1} for $k=j+1$. 
We complete the construction of $\cD_{l+1}$ by induction.

Since $\widetilde{\cD}$ is $(\frac{1}{40},  4)$-spanning, by \eqref{term 200bis}, $\cD_{l+1}$  is $(\frac{1}{20},6)$-spanning.
By taking $\sigma_0 > 0$ sufficiently small, depending only on $\{f_a\}, R_0, \sigma_2$, we can ensure that for any $-R_0 \leq p \leq R_0$, any $\cC \in \bigcup_{1 \leq i \leq l+1}\cD_{i}$, we have $f_a^{p}((\cC, 2\sigma_0)) \subset (f_a^{p}(\cC), \sigma_2/4)$.

By further requiring that $\sigma_0$ be sufficiently small, depending only on $\{f_a\},R_0,\sigma_2,\sigma_1$, \eqref{eq sparse}, \eqref{term 2000}, \eqref{construction inductive dl1}  implies that $\cD_{l+1}$ is $2\sigma_0$-sparse, and 
\begin{equation*}
\bullet\ R(f_a, (\cD_{l+1}, 2\sigma_0), (\{\cD_i\}_{i=1}^l, 2\sigma_0)) >R_0,\quad\quad \bullet\ R_{\pm}(f_a, (\cD_{l+1}, 2\sigma_0)) > R_0.
\end{equation*}
By continuity, and after possibly taking  $\lambda_0$ to be even smaller, but depending only on $\{f_a\}$, $\rho_0$, $\rho_1$, $R_0$, $N_0$, $K$, $\sigma_0$, we can ensure that for all $a' \in B(a,\lambda_0) \cap [0,1]^{J}$, 
\begin{equation*}
\bullet\ R(f_{a'}, (\cD_{l+1}, \sigma_0), (\{\cD_{i}\}_{i=1}^l, \sigma_0)) >R_0,\quad\quad \bullet\ R_{\pm}(f_{a'}, (\cD_{l+1}, \sigma_0))   > R_0.
\end{equation*}
\end{proof}

\section{A stable criterion for stable values}\label{section Bonk-Kleiner}

\subsection{A criterion for stable values}

In this section we state a topological lemma that is at the core of our construction of open accessibility classes. First we borrow a few definitions from \cite{BK}.

\begin{defi}
If $f \colon X \to Y$ is a continuous map between metric spaces $X$ and $Y$, then $y \in Y$ is a \textit{stable value} of $f$ if there is $\epsilon > 0$ such that $y \in \mathrm{Im}(g)$ for every continuous map $g \colon X \to Y$ such that $d_{C^{0}}(f,g) < \epsilon$.
\end{defi}



\begin{defi} \label{light map 2}
Given a constant $\epsilon > 0$,
a continuous map $f \colon X \to Y$ between metric spaces $X$ and $Y$ is called \textit{$\epsilon$-light} if for every $y \in Y$, every connected component of $f^{-1}(y)$ has diameter strictly smaller than $\epsilon$.
\end{defi}

\begin{rema}
This definition is a quantitative  version of the notion of \textit{light map}  in \cite{BK} (a map $f \colon X \to Y$  is called \textit{light} if all point inverses are totally disconnected). 
\end{rema}

Now we state the main topological result  in this section.

\begin{thm} \label{ epsilon lm sv}
For any integer $n \geq 1$, there exists a constant $\epsilon=\epsilon(n) > 0$ such that any $\epsilon$-light continuous map $f\colon [0,1]^n \to \R^{n}$ has a stable value.
\end{thm}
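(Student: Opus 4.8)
## Proof strategy for Theorem \ref{ epsilon lm sv}

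\textbf{Proof strategy.} The plan is to reduce the statement to the nonemptiness of the \emph{persistent image} of $f$, i.e. of the set $S(f)=\bigcap\{\,g([0,1]^c)\;:\;d_{C^0}(f,g)<\delta\,\}$ for some threshold $\delta>0$; by definition a point of $S(f)$ is a stable value, so it suffices to produce one. The key preliminary observation is that $\epsilon$-lightness is $C^0$-robust in a strong quantitative way: for a fixed $\epsilon$-light $f$ there is $\delta_0=\delta_0(f)>0$ such that every $g$ with $d_{C^0}(f,g)<\delta_0$ is again $\epsilon$-light. This follows from a compactness argument in the hyperspace of closed connected subsets of $[0,1]^c$: since the connected components of the fibres of $f$ all have diameter $<\epsilon$, every connected set $A\subset[0,1]^c$ with $\mathrm{diam}(A)\ge\epsilon$ has $\mathrm{diam}(f(A))\ge\delta_0$ for a uniform $\delta_0>0$; hence if $g$ is $C^0$-closer than $\delta_0/2$ to $f$, a connected set of diameter $\ge\epsilon$ cannot lie in a single fibre of $g$. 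Thus I may work with the whole family of $\epsilon$-light maps $C^0$-close to $f$ simultaneously.

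Next I would recall from dimension theory that an $\epsilon$-light map is in particular light, so by the Eilenberg monotone--light factorization $g=\ell\circ\pi$ with $\pi\colon[0,1]^c\twoheadrightarrow Z$ monotone (all fibres connected, hence here of diameter $<\epsilon$) and $\ell\colon Z\to\R^c$ light; the Hurewicz dimension inequality $\dim[0,1]^c\le\dim Z+\dim\pi$ forces $\dim Z=c$, so $g([0,1]^c)=\ell(Z)$ is a $c$-dimensional compactum in $\R^c$ and therefore has nonempty interior. So far this holds for \emph{every} $\epsilon$ and only yields that each nearby $g$ has fat image; it does not yet pin a common point, which is exactly where the hypothesis $\epsilon<\epsilon(c)$ must enter.

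The core of the argument is to show that the smallness of $\epsilon$ relative to the dimension prevents the images $g([0,1]^c)$, $d_{C^0}(f,g)<\delta_0/2$, from being ``swept off'' any fixed point. Here I would invoke the Lebesgue covering theorem for the cube in its quantitative form: taking $\epsilon(c)$ below the Lebesgue number attached to the $c$ pairs of opposite faces of $[0,1]^c$ (so that no set of diameter $<\epsilon$ meets two opposite faces), and refining a fine open star-cover of $g([0,1]^c)$ by its pre-image and then by connected components (which, by the $\delta_0$-estimate applied to a grid of mesh $<\delta_0/\sqrt c$, have diameter $<\epsilon$), one gets a closed cover of $[0,1]^c$ none of whose members meets two opposite faces. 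Lebesgue's theorem then forces multiplicity $\ge c+1$, and tracking this through the combinatorics of the grid shows that the simplicial approximation of $g$ realizes a full $c$-cell pattern around some vertex, i.e. $g$ has local Brouwer degree $\pm1$ onto a small cube $R$; since $R$ is independent of $g$ up to the $\delta_0$-robustness and degree is a homotopy invariant, every $g$ with $d_{C^0}(f,g)<\delta_0/2$ contains the centre $y_0$ of $R$, so $y_0\in S(f)$ is a stable value. The step I expect to be the main obstacle is precisely this last one: extracting a genuine nonvanishing local degree from the covering/dimension theory, i.e. proving that an $\epsilon$-light map with $\epsilon$ below the Lebesgue constant of the cube cannot be combinatorially ``untied'' — this quantitative rigidity, rather than the soft dimension count, is the heart of the result borrowed from \cite{BK}.
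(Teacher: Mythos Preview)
Your proposal has two concrete problems.

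First, the assertion in your second paragraph that ``an $\epsilon$-light map is in particular light'' is false: $\epsilon$-light only bounds the \emph{diameter} of connected components of fibres, not their cardinality or dimension. A fibre component can perfectly well be a small disk. Consequently the Hurewicz inequality does not give $\dim Z=c$, since the monotone factor $\pi$ can have fibres of positive dimension. You acknowledge this paragraph is only a warm-up, but the error should not stand.

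Second, and more seriously, the degree step in your third paragraph is a genuine gap. You correctly pull back a fine cover of $g([0,1]^c)$ and use your $\delta_0$-estimate to bound the diameters of connected components of preimages by $\epsilon$; Lebesgue's lemma then gives multiplicity $\ge c+1$. But multiplicity $\ge c+1$ is simply the statement $\dim[0,1]^c\ge c$ and holds for \emph{every} continuous $g$, including maps of degree zero (e.g.\ a fold). It cannot by itself produce a local Brouwer degree $\pm1$, and there is no mechanism in your sketch that pins the target cube $R$ independently of $g$. You identify this as the main obstacle, and it is; as written the argument does not close.

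The paper gives no self-contained proof either: it simply observes that Bonk--Kleiner's Proposition~3.2 (light maps from spaces of dimension $\ge n$ to $\R^n$ have stable values) goes through with ``obvious modifications''. The modification is that wherever their argument uses lightness to make connected components of preimages of small sets arbitrarily small, one instead uses your $\delta_0$-estimate to make them smaller than $\epsilon$; then $\epsilon<\epsilon(c)$ suffices to run the Lebesgue covering contradiction. Concretely, the clean route is: assume $f$ has no stable value, use local perturbations (supported on $f^{-1}$ of small balls) to push $f$ off the barycentres of a fine triangulation, then radially retract into the $(c-1)$-skeleton to obtain $h$ with $d_{C^0}(f,h)$ small and $\dim h([0,1]^c)\le c-1$. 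Now cover $h([0,1]^c)$ by sets of small diameter and order $\le c$, pull back, take connected components; your $\delta_0$-estimate (which transfers to $h$) gives diameters $<\epsilon$, and Lebesgue's lemma gives the contradiction. This avoids degree altogether.
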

\begin{proof}
In Appendix \ref{app c}.
\end{proof}

\begin{rema}
Theorem \ref{ epsilon lm sv} is a quantitative version of  a result due to Bonk-Kleiner: in \cite[Proposition 3.2]{BK}, the authors proved that any light continuous map from a compact metric space of topological dimension at least $n$ to $\R^{n}$ has stable values.  The proof we give is modeled on theirs. 
\end{rema}

\begin{cor} \label{net to sv}
For any integer $c \geq 1$, let $\{\mathscr{U}_{\alpha}\}_{\alpha \in \cA}$ 
be an open cover of $[0,1]^{c}$ s.t. $\mathrm{diam}(\mathscr{U}_{\alpha}) < \epsilon(c)$ for all $\alpha \in \cA$, where $\epsilon(c)$ is given by Theorem \ref{ epsilon lm sv}. Let  $f \colon [0,1]^{c}\to \R^{c}$ be a continuous map s.t. for any $x \in [0,1]^{c}$, there exists $\cI \subset \cA$ satisfying
\begin{enumerate}
	\item $\cap_{\alpha \in \cI} f(\partial \mathscr{U}_{\alpha}) = \emptyset$;
	\item $x \in \mathscr{U}_{\alpha}\text{ for all }\alpha \in \cI$.
\end{enumerate}
Then $f$ has a stable value.
\end{cor}
\begin{proof}
By Theorem \ref{ epsilon lm sv}, it suffices to check that $f$ is $\epsilon(c)$-light. Given any $x \in [0,1]^{c}$, take $\cI \subset \cA$ satisfying (1), (2). In particular, there exists $\alpha \in \cI$ such that $f(x) \notin f(\partial \mathscr{U}_{\alpha})$. We denote by $P_x$ the connected component of $f^{-1}(f(x))$ containing $x$. We claim that $P_x$ is contained in $\mathscr{U}_{\alpha}$. Indeed, by the continuity of $f$, $f^{-1}(f(x))$ has no accumulating point in $\partial \mathscr{U}_{\alpha}$. If  $P_x \cap (\mathscr{U}_{\alpha})^{c} \neq \emptyset$, then we can find two disjoint open sets $U,V$ s.t. $P_x \subset U \cup V$ and $P_x \cap U$, $P_x \cap V$ are both nonempty. This contradicts the connectedness of $P_x$, hence the claim is true. In particular, the diameter of $P_x$ is not larger than the diameter of $\mathscr{U}_{\alpha}$ which by hypothesis is strictly smaller than $\epsilon(c)$. Since $x$ is an arbitrary point in $[0,1]^{c}$, we deduce that $f$ is $\epsilon(c)$-light.
\end{proof}

\begin{figure}
	\begin{center}
		\includegraphics [width=12cm]{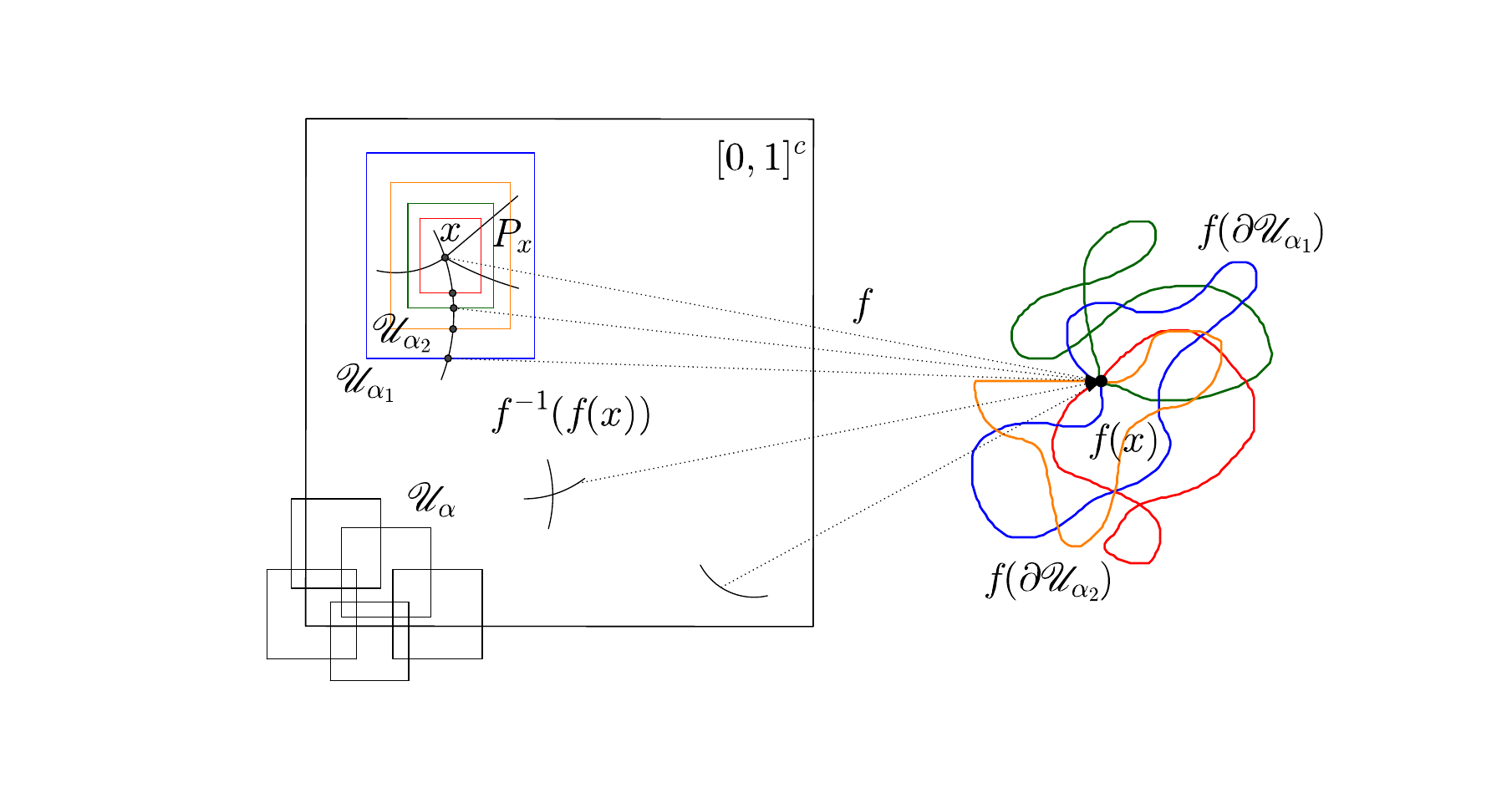}
	\end{center}
\caption{Controlling the size of connected components of point inverses.}
\end{figure}

\subsection{Choosing a cover by disjoint squares}\label{subs disjoint squares}

In this section, we define a cover of $[0,1]^c$ by open cubes, which will later be used when we apply Corollary \ref{net to sv} to show the existence of open accessibility classes.

Given an integer  $c \geq 1$, a positive constant $\theta \in (\frac{c-1}{c}, 1)$, we set
\begin{equation}\label{def K0 K1}
K_0(c, \theta):=\Big\lc \frac{3c + 1}{c-(c-1)\theta^{-1}} \Big\rc +1,\qquad K_1(c, \theta):= c K_0(c, \theta).
\end{equation}
In the following, we will fix $c, \theta$ and abbreviate $K_i(c,\theta)$ as $K_i$, $i=0,1$.

By a direct construction, we can fix a cover $\{\mathscr{U}_{\alpha}\}_{\alpha \in \cA}$ of $[0,1]^{c}$ by open sets in $\R^{c}$, which satisfies: 
\begin{enumerate}
\item $\cA$ is a finite set and for all $\alpha \in \cA$, there exist constants $\{ p_{\alpha, i}, q_{\alpha, i}\}_{i=1,\dots,c} \subset [-1,2]$ such that $\mathscr{U}_{\alpha} = (p_{\alpha,1}, q_{\alpha,1}) \times \dots \times (p_{\alpha,c}, q_{\alpha,c})$;

\item for any $\alpha \in \cA$, $\mathrm{diam}(\mathscr{U}_{\alpha}) < \epsilon(c)$, where $\epsilon(c)$ is given by Theorem \ref{ epsilon lm sv};

\item for each $x \in [0,1]^{c}$, there exists a subset $\cI \subset \cA$ with more than $K_1$ elements satisfying that $x \in \mathscr{U}_{\alpha}$ for all $\alpha \in \cI$, and $\{ \partial \mathscr{U}_{\alpha} \}_{\alpha \in \cI}$ are mutually disjoint;
\item for each $i \in \{1,\dots, c\}$, the points $\{p_{\alpha,i }, q_{\alpha, i}\}_{\alpha \in \cA}$ are mutually distinct. 
\end{enumerate}

For each integer $i \in \{1,\dots, c\}$, we let $\cB_i:= \{ p_{\alpha,i}, q_{\alpha,i} \}_{\alpha \in \cA}$, and for each $\alpha \in \cA$, we denote 
$\partial^i \mathscr{U}_\alpha:=[p_{\alpha,1}, q_{\alpha,1}] \times \dots \times [p_{\alpha,i-1}, q_{\alpha,i-1}]\times \{p_{\alpha,i},q_{\alpha,i}\} \times [p_{\alpha,i-1}, q_{\alpha,i-1}]\times\dots\times [p_{\alpha,c}, q_{\alpha,c}]$. Given any $s \in [-1,2]$, we introduce the normalized coordinate
\begin{equation}\label{varphi i s}
\varphi(i,s):= \frac{6i-2+s}{6c}\in \Big[\frac{i}{c}-\frac{1}{2c},\frac{i}{c}\Big] \subset(0,1].
\end{equation}
Note that for any $i<i'$ and any $s,s'\in [-1,2]$, $\varphi(i,s)<\varphi(i',s')$. We also set 
\begin{equation}\label{definition cmin}
0<C_{\mathrm{min}}:=100 \Big(\min_{\substack{1\leq i \leq c\\ t\neq t' \in \cB_i }} |\varphi(i,t)-\varphi(i,t')|\Big)^{-1}< +\infty.
\end{equation}  

\begin{defi}
We define the set 
\begin{gather}\label{defi Gamma}
\Gamma:= \big\{(i, \cB, \{s_t = (s_{t,1}, \dots, s_{t,c}) \}_{t \in \cB}) \ \vert\ 
  i \in \{1,\dots, c\},\  \cB \subset \cB_i, \ |\cB| = K_0, \\
 \mbox{ and } s_t \in [-1,2]^{i-1} \times \{t\} \times [-1,2]^{c-i},\ \forall\, t \in \cB)\big\}.\nonumber
\end{gather}
\end{defi}

\section{Holonomy maps associated to a family of loops}\label{holo maps fami}

In this section, \hyperref[(H1)]{(H1)} holds. 

\subsection{Continuous and regular family of loops}

\begin{defi}
	Given  $x \in X$, a one-parameter family $\{\gamma(s) = (x_1(s), x_2(s), x_3(s))\}_{s\in [0,1]}$ of $f$-loops at $x$ is said to be \textit{continuous} if for any $i=1,2,3$, the map $s \mapsto x_i(s)$ is continuous. We define $\ell(\gamma) := \sup_{s \in [0,1]} \ell(\gamma(s))$. 
\end{defi}
\begin{lemma}[Continuation of $f$-loops]\label{extension continuous family floops}
	There exist $\cU$, a $C^1$-open neighbourhood of $f$, as well as $ \varsigma_{f} > 0$ such that  the following is true. Let $\gamma$ be a continuous family of $f$-loops at $x\in X$ satisfying $\ell(\gamma)<\frac{\varsigma_f}{2}$. Then
	for any $g \in \cU$ and $y \in B(x, \varsigma_{f})$, we can define $\gamma_{g,y}$, a continuous family of $g$-loops at $y$, such that  $\gamma_{f,x}= \gamma$ and each coordinate of $\gamma_{g,y}(s)$ depends continuously on $(g,y,s)$.
\end{lemma}
\begin{proof}
	Let $\gamma=(x_1,x_2,x_3)$. If $(g,y)$ is chosen sufficiently close to $(f,x)$, then for any $s \in [0,1]$, the following leaves intersect at a unique point, and we define  
	\begin{itemize}
		\item $\{y_{g,1}(s)\}:=\cWu_{g}(y, h_f)\cap\cWcs_{g}(x_1(s), h_f)$; 
		\item $\{y_{g,2}(s)\}:=\cWs_{g}(y_{g,1}(s), h_f)\cap\cWcu_{g}(x_2(s), h_f)$; 
		\item $\{y_{g,3}(s)\}:=\cWu_{g}(y_{g,2}(s), h_f)\cap\cWcs_{g}(y, h_f)$.
	\end{itemize}
	
	Then we can verify our lemma for  $\gamma_{g,y}:=(y_{g,1},y_{g,2},y_{g,3})$.   
\end{proof}

\subsection{A criterion for stable accessibility}

Given $x \in X$, and $\gamma=\{\gamma(s)\}_{s \in [0,1]}$, a continuous family of $f$-loops at $x$, satisfying $\ell(\gamma) < \overline{\sigma}_f$ (defined in Notation \ref{notation 2}), we introduce
\begin{equation} \label{psifxgammadefi}
\psi = \psi(f,x,\gamma)\colon\left\{
\begin{array}{rcl}
[-1,2]^{c} &\to& \cWc_f(x),  \\
s = (s_1,\dots, s_c)&\mapsto& (\prod_{j=1}^{c}H_{f, \gamma(\varphi(j,s_j))})(x).
\end{array}
\right.
\end{equation}
By Notation \ref{notation 2}, $\psi$ is well-defined. Besides, it is clear that $\mathrm{Im}(\psi)\subset Acc_f(x)$.

 The following property combines a global property, based on the notion of \lq\lq accessibility modulo central disks\rq\rq\, which  appears in \cite{DW} (see also Section \ref{section spanning}), and a local one, based on the notion of $\epsilon$-light maps in Section \ref{section Bonk-Kleiner}, which together imply the accessibility property. 

\begin{defi}[Property $(\mathcal{P})$]\label{property cP}
	We say 
	that $f$ satisfies property $(\mathcal{P})$ if there exist $0 < \theta  <  \theta' < 1$, an integer $k\geq 1$, and $\cD$, a $(\theta,k)$-spanning $c$-family for $f$, such that for any $\cC\in \cD$, any $x \in \theta'\cC$, there exists a continuous family of $f$-loops at $x$, denoted by $\{\gamma_x(s)=(x_1(s),x_2(s),x_3(s))\}_{s}$, with $\ell(\gamma_x)<\overline{\sigma}_f$, such that the following is true: let  $\psi_x:=\psi(f,x,\gamma_x)$ be given by \eqref{psifxgammadefi}. Then for any $(i, \cB, \{s_t\}_{t \in \cB}) \in \Gamma$ (defined in \eqref{defi Gamma}), there exist $t,t' \in \cB$ such that $\psi_x(s_t) \neq \psi_x(s_{t'})$. 
\end{defi}
\begin{prop} \label{lemmappointlooptostablevalue}
	If $f$ satisfies 
	property $(\mathcal{P})$, then $f$ is $C^1$-stably accessible. 
\end{prop}
\begin{proof}  Assume that $f$ satisfies $(\mathcal{P})$ for $0 < \theta  <  \theta' < 1$,  $k\geq 1$, some $(\theta,k)$-spanning $c$-family for $f$, denoted by $\cD$, and a set of families of $f$-loops $\{\gamma_x\}_{x \in  \theta'\cC,\ \cC \in \cD}$. Take $\cC \in \cD$, $x \in \theta\cC$, and set $\psi_x:=\psi_x(f,x,\gamma_x)$. We claim that $Acc_f(x)$ is open. 
	
	To see this, take  an arbitrary $s \in [0,1]^c$. By property (3) of  the open cover $\{\mathscr{U}_\alpha\}_{\alpha \in \mathcal{A}}$   in Subsection \ref{subs disjoint squares}, there exists a subset $\cI \subset \cA$ with $|\cI| \geq K_1$, such that $s \in \mathscr{U}_{\alpha}$ for all $\alpha \in \cI$, and $\{ \partial \mathscr{U}_{\alpha} \}_{\alpha \in \cI}$ are mutually disjoint. Let us show that 
	$\cap_{\alpha \in \cI} \psi_x(\partial \mathscr{U}_{\alpha}) =\emptyset$. Assume it is not true. By \eqref{def K0 K1} and the pigeonhole principle, we may choose $i \in \{1,\dots,c\}$ and $\cI' \subset \cI$ with $|\cI'| = K_0$, such that $\cap_{\alpha \in \cI'} \psi_x(\partial^i \mathscr{U}_{\alpha}) \neq \emptyset$. Thus there exists $(i,\mathcal{B},\{s_t\}_t) \in \Gamma$ such that
	$$
	\psi_x(s_t)=\psi_x(s_{t'}),\quad \forall\, t,t' \in \mathcal{B},
	$$
	which contradicts $(\mathcal{P})$. Therefore, $\cap_{\alpha \in \cI} \psi_x(\partial \mathscr{U}_{\alpha}) =\emptyset$. Since $s$ can be taken arbitrary in $[0,1]^c$, Corollary \ref{net to sv} implies that $\psi_x$ has a stable value $y$, and thus, $\mathrm{Im}(\psi_x)$ contains an open neighbourhood of $\{y\}$ in $\cWc_f(x)$. But $\psi_x$ takes values in $\cWc_f(x)\cap Acc_f(x)$, hence the latter has non-empty interior. Saturating by local stable and unstable leaves, we deduce that $Acc_f(x)$ has non-empty interior. Then  the accessibility class 
	$Acc_f(x)$ is open, and the claim is proved. 

	Since $\cD$ is $(\theta,k)$-spanning, the previous claim implies that for any $x\in X$, $Acc_f(x)$ is open. This shows that $f$ is accessible, since $X$ is connected. 
	
	Now it suffices to show that  $(\cal P)$ is a $C^1$-open condition. Let $\varsigma_{f}>0$ be as in Lemma \ref{extension continuous family floops} and let $\sigma \in (0, \varsigma_{f})$ be a  small constant to be determined. Let $\theta''':=\frac{\theta+\theta'}{2}$.  By Lemma \ref{lemma stability spanning} and Remark \ref{remarque trois}, for any  $g$ sufficiently $C^1$-close to $f$, there exists $\cD_g$, a $(\theta''', k+2)$-spanning $c$-family for $g$, such that for each $\cC_g \in \cD_g$, there exists $\mathcal{C}\in \mathcal{D}$ so that $\theta' \cC_g \in (\theta'\cC, \sigma)$. For each $y \in \theta' \cC_g$, take $x \in \theta' \cC$ with $y \in B(x, \sigma) \subset B(x, \varsigma_{f})$. Applying Lemma \ref{extension continuous family floops} to $\gamma_x$, we obtain $\gamma_{g,y}$, a continuous family of $g$-loops at $y$ which is close to $\gamma_x$. By choosing $\sigma$  sufficiently small, we can ensure that for any $g$ sufficiently close to $f$ in $C^1$ topology, any $\cC_g \in \cD_g$, $y \in \theta' \cC_g$, any $(i, \cB, \{s_t\}_{t \in \cB}) \in \Gamma$,   there exists $t, t' \in \cB$ such that $\tilde\psi_y(s_t) \neq \tilde\psi_y(s_{t'})$, where $\tilde\psi_y := \psi(g, y, \gamma_{g,y})$. Thus $(\cal P)$ is a $C^1$-open condition. 
\end{proof}

\subsection{Parametrising an accessibility set using a family of loops}

To  optimize the pinching exponents in our theorems, we will mainly consider the class of continuous families of loops as follows.

\begin{defi}[Regular family] \label{defregularfamily}
	Given $x \in X$ and constants $\sigma \in \big(0,\frac{\overline{\sigma}_f}{2C_f}\big)$, $C>0$,  a continuous family $\{\gamma(s) = (x_1(s), x_2(s), x_3(s))\}_{s \in [0,1]}$ of $f$-loops at $x$ is said to be $(\sigma,C)$-\textit{regular} if it satisfies:
	\enmt
	\item $\ell(\gamma) < \sigma$ and the map $s \mapsto x_1(s)$ is   $C$-Lipschitz  (with respect to $d_{\cWu_f}$); 
	\item there exists $x' \in \cWcs_{f}(x, \frac{\sigma_f}{2C_f})$ such that $x_2(s) \in \cWcu_{f}(x', \sigma)$ for all $s \in [0,1]$.
	\eenmt
	In this case, we say that $\gamma$ is \textit{determined by} $x'$ and $(x_1(s))_{s\in[0,1]}$.
	Indeed, for any $s \in [0,1]$, $x_2(s)$ is the unique intersection of $\cWs_f(x_1(s), h_f)$ and $\cWcu_f(x', h_f)$, and $x_3(s)$ is the unique intersection of $\cWu_f(x_2(s), h_f)$ and $\cWcs_f(x, h_f)$.\footnote{Indeed, by $\ell(\gamma)<\sigma$ and  Notation \ref{notation 1}, we have  $d(x',x_1(s))<C_f(\sigma+\frac 12 \sigma_f)<\sigma_f$. Then again by  Notation \ref{notation 1},  $d_{\cWs}(x_2(s),x_1(s))<\sigma$ and $d_{\cWcu}(x_2(s),x')<C_f(\sigma+\frac 12 \sigma_f) <\sigma_f$, we see that $x_2(s)$ is uniquely determined. We argue similarly for $x_3(s)$. } 
	
\end{defi}

We now restrict our attention to maps in the region defined as follows. 

\begin{nota}\label{choosingneighbourhood}
	Assume that $f_0 \in \cPH^2(X)$ is dynamically coherent and center bunched. We consider the following cases:
	\enmt
	\item If $\dim E^c_{f_0}  = 1$ and $f$ is plaque expansive, then we let $\mathbb{U}(f_0)$ be a $C^1$-open neighbourhood of $f_0$ in which all maps are plaque expansive;
	\item If $\dim E^c_{f_0} \geq 2$ and satisfies  \ref{condition ae} or   \ref{condition be}, then we let $\mathbb{U}(f_0)$ be a $C^1$-open neighbourhood of $f_0$ such that $d_{C^1}(f_0, f)<\varepsilon_{f_0}$ for any $f \in \mathbb{U}(f_0)$ (see Notation \ref{notation 1}\eqref{item 5 definition 1}).
	\eenmt
	Moreover, we assume that $\mathbb{U}(f_0)$ is small enough so that any $g \in \mathbb{U}(f_0)$ is  $\theta'_{f_0}$-pinched and center bunched; and the constants  $h_{f_0}, \sigma_{f_0}, C_{f_0}$, $\overline{\Lambda}_{f_0}$ in Notation \ref{notation 1} work for any $g \in \mathbb{U}(f_0)$.
	By points \eqref{item 1 definition 1}, \eqref{item 2 definition 1}, \eqref{item 4 definition 1}, \eqref{item 5 definition 1} in Notation \ref{notation 1}, such $\mathbb{U}(f_0)$ exists. 
	We stress that we do \textit{not} require $\Lambda_f$ to be uniformly bounded for  $f \in \mathbb{U}(f_0)$.
\end{nota}

In the rest of this section, we fix a map $f_0$ as in Notation \ref{choosingneighbourhood}, and assume that $f \in \mathbb{U}(f_0)$. We also fix $x \in X$, $C > 0$, $\sigma \in (0, \frac{\overline{\sigma}_f}{2C_f})$, and take  $\{\gamma(s) = (x_1(s), x_2(s), x_3(s))\}_{s \in [0,1]}$, a $(\sigma, C)$-regular family of $f$-loops at $x$, determined by $x' \in \cWcs_f(x,\frac{1}{2}\sigma_f)$ and $(x_1(s))_{s \in [0,1]}$.  Let $\hat{f} \colon U \times X \to X$ be a $C^2$-deformation at $(a,f)$, and set $T  = T(\hat{f})$. We will always assume that $U$ is conveniently small so that for all $b \in U$, $\hat f(b,\cdot)\in \mathbb{U}(f_0)$ and the $C^2$-uniform constant  $\Lambda_f$ continues to work for $\hat f(b,\cdot)$. 
We now define a  continuation of $\gamma$ as follows. 
\begin{defi} \label{lift of loop for deformation}
	We define a lift of $\{\gamma(s)\}_s$ by
	\begin{equation} \label{term 2040}
	\hat{\gamma}(s) = ((a,x_1(s)), (a,x_2(s)), (a,x_3(s))), \quad \forall\,  s \in [0,1].
	\end{equation}
	Then by continuity, there exists a $C^2$-uniform constant $\delta_{a, T}=\delta_{a,T}( T )> 0$ 
	such that $B(a, \delta_{a, T}) \subset U$, and for any $(b,y) \in \cWc_T((a,x),\delta_{a,T})$,  any $s \in [0,1]$, each of the following intersections exists and is unique:
	\enmt
	\item $\{(b, \hat{x}_1(b,y, s))\}:=\cWu_T((b,y), h_f)\cap\cWcs_T((a,x_1(s)), h_f)$;
	\item $\{(b, \hat{x}_2(b,y, s))\}:=\cWs_T((b, \hat{x}_1(b,y,s)),h_f)\cap\cWcu_T((a,x'),h_f)$;
	\item $\{(b, \hat{x}_3(b,y, s))\}:=\cWu_T((b, \hat{x}_2(b,y,s)),h_f)\cap\cWcs_T((a,x),h_f)$.
	\eenmt
	We thus get a continuous family of $\hat{f}(b,\cdot)$-loops at $y$, denoted by $\{\gamma_{b,y}(s)\}_s$, where
	\begin{equation*}
	\gamma_{b,y}(s) = ( \hat{x}_1(b,y, s), \hat{x}_2(b,y, s), \hat{x}_3(b,y, s)), \quad \forall\, s\in [0,1].
	\end{equation*} 
	We further require that $\delta_{a,T}<\varepsilon_{f_0}/10$, and is sufficiently small so that for any $(b,y) \in \cWc_T((a,x),\delta_{a,T})$, we have $\ell(\gamma_{b,y}) < \overline{\sigma}_{\hat f(b,\cdot)}$ (see Notation \ref{notation 2}). 
	Note that in general, $\gamma_{b,y}$ is different from $\gamma_{\hat{f}(b,\cdot), y}$ as defined in Lemma \ref{extension continuous family floops}. 	
	
	Since $\ell(\gamma)<\sigma<\frac{\overline{\sigma}_f}{2C_f}$, there exists a $C^2$-uniform constant $\overline{\delta}_{a,T}=\overline{\delta}_{a,T}(T,\sigma)\in (0,\delta_{a,T}/2\Lambda_{f})$ such that for any $(b,y)\in \cW_T^c((a,x),\overline{\delta}_{a,T})$, we have $\ell(\gamma_{b,y})<2 \sigma$. 
	In the rest of the proof we will reduce the size of $\overline{\delta}_{a,T}$ finitely many times.
	By Notation \ref{notation 2}, the following map is well-defined:
	\begin{equation} \label{widehatpsidefi}
	\widehat{\psi} = \widehat{\psi}(T) \colon\left\{
	\begin{array}{rcl}
	\cWc_{T}((a,x), \overline{\delta}_{a,T}) \times [-1,2]^{c} &\to& \cWc_T(a,x),  \\
	(b,y,s)&\mapsto& (\prod_{j=1}^{c}H_{T, \hat \gamma(\varphi(j,s_j))})(b,y).
	\end{array}
	\right.
	\end{equation}
	Moreover, by Lemma \ref{lem41} and Notation \ref{notation 1}\eqref{item 1 definition 1} (recall Notation \ref{notation 2}), we have
	\begin{equation}\label{relationwidehatpsipsifxgamma}
	\pi_X\widehat{\psi}(b,y,\cdot) = \psi(\hat{f}(b,\cdot), y, \gamma_{b,y})(\cdot) \in \cWc_{\hat{f}(b,\cdot)}(y, 2K_f\sigma) \subset B(y, 2C_{f_0}K_f \sigma).
	\end{equation} 
\end{defi}

\begin{figure}
	\begin{center}
		\includegraphics [width=14cm]{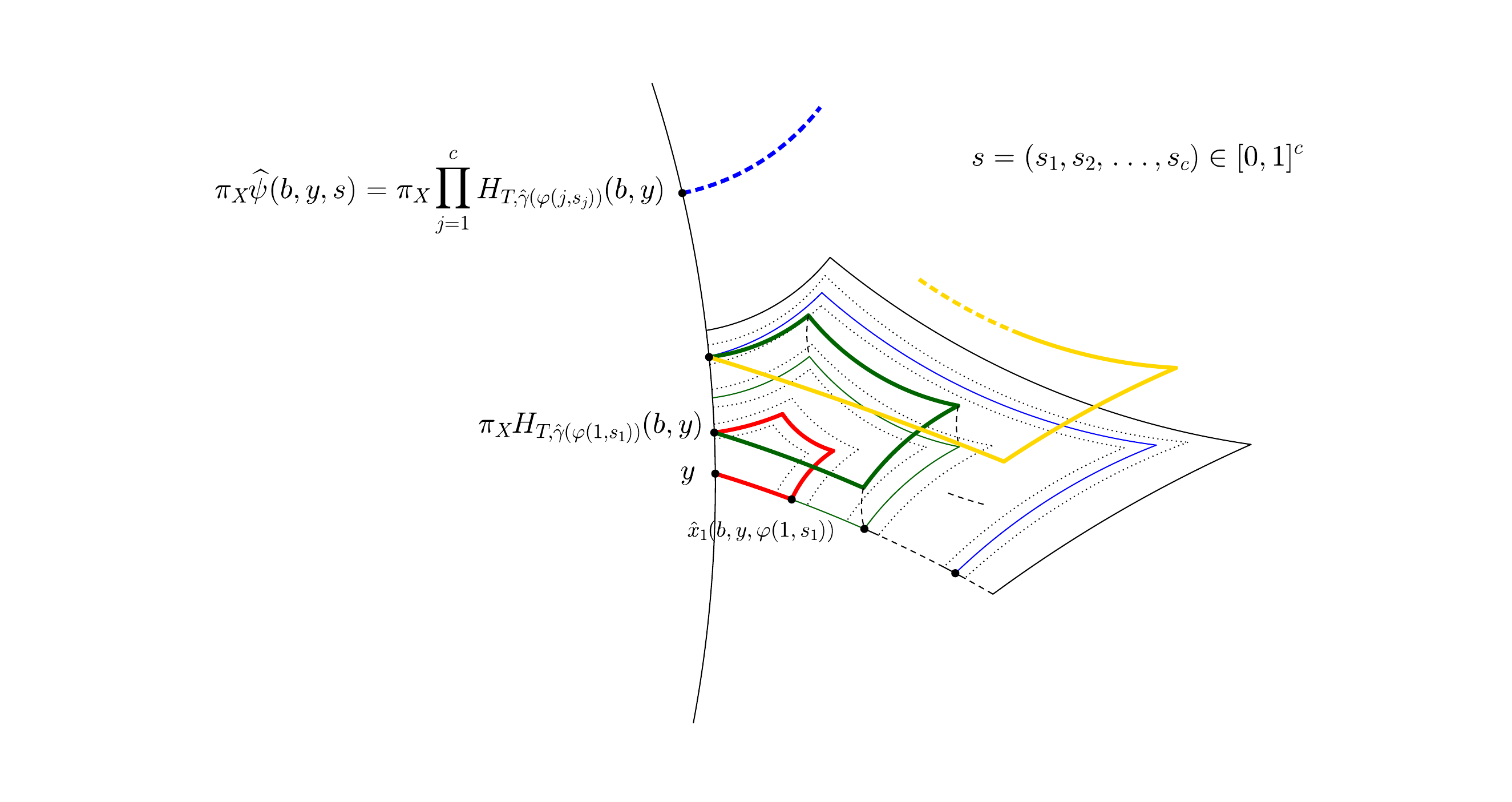}
	\end{center}
\caption{Parametrizing a subset of the accessibility class of $y$ for $\hat f(b,\cdot)$.}\label{fig 4}
\end{figure}

The following lemma is important. This is  the place where several technical conditions introduced earlier come into use.
\begin{lemma} \label{thetatheta}
	Let $f_0,f,\hat f,C,\sigma,x$ be given as above, and assume in addition  that $c \geq 2$  (in particular, $f_0$ satisfies \ref{condition ae} or \ref{condition be}) and $\sigma\in \big(0,\frac{1}{100}K_f^{-2}\overline{\Lambda}_{f_0}^{-2} \min(\sigma_{f_0}^2,\delta_{a,T}^2)\big)$. Then there exists a $C^2$-uniform constant $\widehat C_{2}=\widehat C_2(f) > 0$ such that for any $(b,y) \in \cWc_{T}((a,x), \overline{\delta}_{a,T})$, any $s,s'\in [-1,2]^{c}$, we have
	\begin{equation*}
	d(\widehat{\psi}(b,y, s) , \widehat{\psi}(b,y, s')) \leq \widehat C_2 C  |s-s'|^{\theta_0},
	\end{equation*}
	where $\theta_0$ is defined for \ref{condition ae} (resp. for \ref{condition be}),  as
	\begin{equation}\label{defi theta0}
	\theta_0 = \theta'_{f_0} (\theta''_{f_0})^3>\frac{c-1}{c}\quad \text{ \big(resp.}\ \theta_0 =  \theta'_{f_0} (\theta''_{f_0})^4>\frac{c-1}{c} \big).
	\end{equation}
\end{lemma}
\begin{proof}
	Let $s=(s_k),s'=(s_k') \in [-1,2]^c$, and for any $j\in\{1,\dots,c\}$, set $t_j:=\varphi(j,s_j)\in [0,1]$ and $t_j':=\varphi(j,s_j')\in [0,1]$. For each $0 \leq i \leq c$, we let 
	\begin{equation*}
	W_i :=  \prod_{j={1}}^{i} H_{T, \hat{\gamma}(t_j)}(b,y), \quad Z_{i} := \prod_{j=i+1}^{c}H_{T, \hat{\gamma}(t'_j)}(W_i).
	\end{equation*}
	Arguing as in Lemma \ref{lem41}, for each $0 \leq i \leq c$, we see that  $\pi_X(W_i) \in \cWc_{\hat{f}(b,\cdot)}(y, 2K_f\sigma)$, and hence $W_i \in \cW_T^c((a,x),\overline{\delta}_{a,T}+2K_f \sigma)\subset \cW_T^c((a,x),\delta_{a,T}
	)$. Similarly, for any $0\leq i\leq c-1$, $p\in [0,1]$, we have $H_{T,(a,x),(a,x_1(p))}(W_i) \in \cW_T^c((a,x_1(p)),\delta_{a,T})$.
	
	It is direct to see that $Z_0 = \widehat{\psi}(b,y,s')$, $Z_c = \widehat{\psi}(b,y,s)$. Thus it is enough to estimate $d(Z_0,Z_c)$.
	For any $0 \leq i \leq c-1$, we observe that
	\begin{equation*}
	Z_i = \prod_{j=i+2}^{c} H_{T, \widehat{\gamma}(t'_j)}(H_{T, \widehat{\gamma}(t'_{i+1})}(W_i)), \quad Z_{i+1} = \prod_{j=i+2}^{c} H_{T, \widehat{\gamma}(t'_j)}(H_{T, \widehat{\gamma}(t_{i+1})}(W_i)).
	\end{equation*}
	Since $f$, thus $T$, are $C^2$ and center bunched, and the maps $\{H_{T, \widehat{\gamma}(t'_j)}\}_{j=1,\dots,c}$ are obtained by composing holonomy maps, Notation \ref{notation 1}\eqref{item 3 definition 1} yields 
	\begin{equation*}
	d(Z_i, Z_{i+1}) \leq C_{f_0} \Lambda_{f}^{4c} d(H_{T, \widehat{\gamma}(t'_{i+1})}(W_i), H_{T, \widehat{\gamma}(t_{i+1})}(W_i)).
	\end{equation*}
	Therefore, it suffices to prove that for some $C^2$-uniform constant $c_1=c_1(f_0)>0$, for any $z \in \cW_T^c((a,x),\delta_{a,T})$, any $p,q \in [0,1]$, we have 
	\begin{equation*}
	d_{\cWc_{\hat{f}(b,\cdot)}}(\pi_X (H_{T, \hat{\gamma}(p)}(z)) , \pi_X (H_{T,\hat{\gamma}(q)}(z)))  \leq c_1  |p-q|^{\theta_0}. 
	\end{equation*}
	Given any $z=(b,\pi_X(z))\in \cW_T^c((a,x),\delta_{a,T})$, any $p,q\in [0,1]$, we set 
	\begin{equation*}
	\begin{array}{lcl}
	z_1 := H^u_{T, (a,x), (a, x_1(p))}(z), && z'_1 := H^u_{T, (a,x), (a, x_1(q))}(z), \\
	z_2 := H^s_{T, (a, x_1(p)), (a, x_2(p))}(z_1), &&  z'_2 := H^s_{T, (a, x_1(q)), (a, x_2(q))}(z'_1), \\
	z_3 := H^u_{T, (a, x_2(p)), (a, x_3(p))}(z_2), &&  z'_3 := H^u_{T, (a, x_2(q)), (a, x_3(q))}(z'_2).
	\end{array}
	\end{equation*}
	\begin{claim}\label{claime deux} 
		We have $d_{\cWu_{\hat{f}(b,\cdot)}}(\pi_X(z_1),\pi_X(z'_1)) \leq \overline{\Lambda}_{f_0}^2 C_{f_0} d_{\cWu_f}(x_1(p), x_1(q))^{\theta_1}$. Here $\theta_1= (\theta''_{f_0})^3$ if \ref{condition a} is satisfied, otherwise $\theta_1= (\theta''_{f_0})^4$, when \ref{condition b} is satisfied.
	\end{claim}
	\begin{proof}
		We abbreviate $x_1(p)$ (resp. $x_1(q)$) as $x_1$ (resp. $x'_1$).  It is clear that $z_1,z_1' \in \{b\}\times X$. 
		By Notation \ref{notation 1}\eqref{item 5 definition 1} and Notation \ref{choosingneighbourhood}, we see that there exists a leaf conjugacy between $\cWc_{f}$ and $\cWc_{\hat{f}(b,\cdot)}$, denoted by $\mathfrak{h}=\mathfrak{h}_{f,\hat f(b,\cdot)}$, such that $d(\mathfrak{h}(x_1), \mathfrak{h}(x'_1)) < \overline{\Lambda}_{f_0} d_{\cWu_f}(x_1, x'_1)^{(\theta''_{f_0})^2}\leq \overline{\Lambda}_{f_0}(2\sigma)^{1/2}<\frac{\sigma_{f_0}}{2}$. 
		
		By reducing the size of $\overline{\delta}_{a,T}$ if necessary, we may suppose that both $z_1$ and $(b,\mathfrak{h}(x_1))$ belong to $W^c_T((a,x_1), \sigma_{f_0}/(4C_{f_0})) \cap \{b\} \times X$.
		This implies that $\pi_X(z_1)\in \cW^c_{\hat f(b,\cdot)}(\mathfrak{h}(x_1),\sigma_{f_0}/2)$. Similarly, $\pi_X(z_1')\in \cW_{\hat f(b,\cdot)}(\mathfrak{h}(x_1'),\sigma_{f_0}/2)$.
		
		By definition, we have $\pi_X(z_1)\in \cW_{\hat f(b,\cdot)}^u(\pi_X(z_1'),h_{f_0})$. Then by Notation \ref{notation 1}\eqref{item 1 definition 1}, \eqref{item 5 definition 1}, we can see that 
		\begin{equation}\label{ew sept six}
		d_{\cWu_{\hat{f}(b,\cdot)}}(\pi_X(z_1),\pi_X(z'_1)) \leq \overline{\Lambda}_{f_0} C_{f_0}  d(\mathfrak{h}(x_1), \mathfrak{h}(x'_1))^{\theta_2},
		\end{equation}
		where $\theta_2 = \theta''_{f_0}$ if  \ref{condition a} is satisfied; $\theta_2 = (\theta''_{f_0})^2$ if \ref{condition b} is satisfied. Hence by Notation \ref{notation 1}\eqref{subitem 6.2}, the right hand side of \eqref{ew sept six} is at most $ \overline{\Lambda}_{f_0}^2 C_{f_0} d_{\cWu_f}(x_1,x'_1)^{(\theta_{f_0}'')^2 \theta_2}=\overline{\Lambda}_{f_0}^2 C_{f_0} d_{\cWu_f}(x_1,x'_1)^{\theta_1}$.
	\end{proof}

	By Notation \ref{notation 1}\eqref{item 4 definition 1}, Notation \ref{choosingneighbourhood},  Claim \eqref{claime deux} and $\sigma<\Big(\frac{\sigma_{f_0}}{10 \Lambda_f \overline{\Lambda}_{f_0}^2 C_{f_0}}\Big)^2$, we obtain 
	\begin{align}\label{eq sept point sept}
	d_{\cWcu_{\hat{f}(b,\cdot)}}(\pi_X(z_2),\pi_X(z'_2)) &\leq  \Lambda_{f} d_{\cWcu_{\hat{f}(b,\cdot)}}(\pi_X(z_1), \pi_X(z'_1))^{\theta'_{f_0}}\nonumber  \\ & \leq \Lambda_f \overline{\Lambda}_{f_0}^{2}C_{f_0}  d_{\cWu_f}(x_1(p), x_1(q))^{\theta'_{f_0} \theta_1}<\sigma_{f_0}.
	\end{align}  
	By Notation \ref{notation 1}\eqref{item 2 definition 1},\eqref{item 3 definition 1}, and since $f$ is $C^2$ and center bunched, we get
	\begin{align}\label{eq sept point huit} d_{\cWc_{\hat{f}(b,\cdot)}}(\pi_X(H_{T, \hat{\gamma}(p)}(z)) , \pi_X( H_{T,\hat{\gamma}(q)}(z))) 
	&\leq \Lambda_f d_{\cWc_{\hat{f}(b,\cdot)}}(\pi_X(z_3),\pi_X(z'_3))\nonumber \\ &\leq \Lambda_f^2 C_{f_0} d_{\cWcu_{\hat{f}(b,\cdot)}}(\pi_X(z_2), \pi_X(z'_2)).
	\end{align}
	Since $\gamma$ is $(\sigma, C)$-regular, we have $d_{\cWu_f}(x_1(p), x_1(q)) \leq C |p-q|$. We conclude the proof by \eqref{eq sept point sept}, \eqref{eq sept point huit} and by noting that $\theta_0 = \theta'_{f_0} \theta_1$.
\end{proof}

\section{Constructing charts and vector fields}\label{sec construct cha}

In order to construct  infinitesimal deformations with required properties, we will first introduce coordinates in a neighbourhood of each $c$-disk. In this section, we assume that \hyperref[(H1)]{(H1)} holds, and $r \geq 2$.

In the following, our goal is to define certain vector fields in order to perturb the dynamics and induce a displacement of the holonomies. More precisely, given a small center disk, we define a vector field localized close to the disk. These vector fields will be rich enough for us to apply Proposition \ref{determinant for smooth deformations}.   

\begin{construction}\label{given c disk get a chart}
There exist $C^2$-uniform constants $\overline{h}_{f}\in (0, h_f)$,  $\overline{C}_{f}  > 1$ such that the following is true. For any $c$-disk of $f$, denoted by $\cC = \cWc_f(x,h)$, with $x \in X$ and $h \in (0, \overline{h}_{f})$, there exists a $C^r$ volume preserving map $\phi = \phi(\cC) \colon (-h, h)^{d} \to X$ such that $\phi(0) = x$ and
\enmt
\item\label{notation 3 1} $\frac{1}{5}\cC \subset \phi((-h/ 4, h/ 4)^{c} \times \{0\}^{d_u + d_s} )\subset \phi((-2h/3, 2h/3)^{c} \times \{0\}^{d_u + d_s}) \subset \cC$;
\item\label{notation 3 2} $\norm{\phi}_{C^2} < \overline{C}_{f}$;
\item\label{notation 3 3} $D\phi(0,  \R^{c} \times \{0\}^{d_u + d_s})$, $D\phi(0, \{0\}^{c} \times \R^{d_u} \times \{0\}^{d_s})$, $D\phi(0, \{0\}^{c+d_u} \times \R^{d_s})$ are respectively equal to $E^{c}_f(x),E^{u}_f(x), E^{s}_f(x)$;
\item\label{notation 3 4} for any $y \in \phi((-h,h)^{d})$, $\Pi_cD_y(\phi^{-1})\colon E^c_{f}(y) \to \R^c$ has determinant in $\big(\overline{C}_f^{-1},\overline{C}_f\big)$, where $\Pi_c \colon \R^{d} \simeq \R^{c} \times \R^{d_u} \times \R^{d_s} \to \R^{c}$ is the canonical projection;
\item\label{notation 3 5}
for any $\zeta > 0$, there exists a $C^1$-uniform constant $\overline{h}_{f, \zeta} \in (0, \overline{h}_f)$ so that if $h \in (0, \overline{h}_{f, \zeta})$, then for any $y \in \phi((-h,h)^{d})$, $\Pi_cD_y(\phi^{-1})\colon E^{su}_{f}(y)  \to \R^c$ has norm  smaller than $\zeta$.
\eenmt

The above charts exist. Indeed, for some $C^2$-uniform constant $\overline{C}_f'>0$, for any $x \in X$, we can choose a $C^\infty$ volume preserving diffeomorphism $\phi'\colon (-2h,2h)^d \to X$ satisfying: $\phi'(0)=x$;  \eqref{notation 3 2},\eqref{notation 3 3} for $(\phi',\overline{C}_f')$ in place of $(\phi,\overline{C}_f)$; and that $D\phi'(0)$ is an isometry restricted to $\R^c \times \{0\}^{d_u+d_s}$, etc. Then, for sufficiently small $h$, $(\phi')^{-1}(\mathcal{C})$ is contained in the graph of a $C^r$ map $\psi\colon (-2h,2h)^c \to (-2h,2h)^{d_u+ds}$ with $\psi(0)=0$ and $\|\psi\|_{C^2} < \overline{C}_f''$ for some $C^2$-uniform constant $\overline{C}_f''>0$. For $x=(x_c,x_{us})\in (-h,h)^c \times (-h,h)^{d_u+d_s}$, we define $\phi(x)=\phi'(x_c,x_{us}+\psi(x_c))$. It is direct to verify \eqref{notation 3 1}-\eqref{notation 3 5} by taking $\overline{h}_f$ sufficiently small, and $\overline{C}_f$ sufficiently large.

For $*=u,s$, set $e_* := (1,0,\dots, 0) \in \R^{d_*}$. 
For any $0<\lambda<h<\overline{h}_f$, we define
\begin{align*}
\cWcs(\lambda) &:= \phi((- h, h)^{c} \times \{ \lambda e_u \} \times (- h, h)^{d_s}),\\
\cWcu(\lambda) &:= \phi((- h, h)^{c} \times (-h, h)^{d_u} \times \{\lambda e_s\}).
\end{align*}
\end{construction}

We construct regular families of loops as follows. 

\begin{lemma}\label{lem def regular family and chart}
There exist $C^2$-uniform constants $\widetilde{h}_f \in (0,\overline{h}_f)$,  $\widetilde{C}_{f} > 0$ such 
that the following is true. For any $\rho \in (0, \widetilde{h}_f)$, any $\sigma \in (0, \widetilde{C}_f^{-1}\rho)$, there exist constants $\hat \varepsilon_0 =\hat \varepsilon_0(f, \rho, \sigma)>0$, $\hat \sigma_0 = \hat \sigma_0(f, \rho,\sigma)\in (0,\sigma)$, such that for any $c$-disk  $\cC$  of $f$ with radius $h \in (\rho, \widetilde{h}_f)$; any $g \in \cPH^1(X)$ such that $d_{C^1}(f,g) < \hat \varepsilon_0$; any $x \in (\frac{1}{5}\cC, \hat  \sigma_0)$, there exists $\{\gamma(s) = (x_{1}(s), x_{2}(s), x_{3}(s))\}_{s \in [0,1]}$, a $(\sigma, \widetilde{C}_{f})$-regular family of $g$-loops at $x$ with the following properties: let $\phi=\phi(\mathcal{C})$, $\sigma':= \widetilde{C}_f^{-\frac{1}{2}}\sigma$; then we have
\begin{enumerate}[label=(\roman*)]
\item for any $s \in [0,1]$, any $i= 2,3$, $\cWc_{g}(x_i(s), K_f\sigma)$ is disjoint from the image $\phi((-h,h)^{c+d_u} \times (-\frac{\sigma'}{2}, \frac{\sigma'}{2})^{d_s})$;
\item take $C_{\mathrm{min}}$ as in \eqref{definition cmin}. For any $s \in [0,1]$, we have
\begin{equation*}
\cWc_{g}(x_1(s),K_f\sigma) \subset \phi\big((-h/2,h/2)^c\times (\sigma' s e_u+(-C^{-1}_{\mathrm{min}}\sigma',C^{-1}_{\mathrm{min}}\sigma')^{d_u})\times (-\frac{\sigma'}{5},\frac{\sigma'}{5})^{d_s}\big).
\end{equation*}
\end{enumerate}
\end{lemma}

\begin{proof}
Set $\{x'\} := \cWs_g(x, h_f) \cap \cWcu(\sigma')$. By Notation \ref{notation 1},  Construction \ref{given c disk get a chart}(2),(3), and by taking  $\widetilde{C}_f$ sufficiently large,  $\widetilde{h}_f, \hat{\varepsilon}_0, \hat{\sigma}_0$ sufficiently small, we have:
\enmt
\item For each $s \in [0,1]$, each of the following intersections exists and is unique:
\begin{enumerate}[label=(\alph*)]
	\item $\{ x_1(s)\} := \cWu_g(x, h_f) \cap \cWcs(s\sigma')$;
	\item $\{x_2(s)\} := \cWs_g(x_1(s), h_f) \cap \cWcu_g(x', h_f)$;
	\item $\{ x_3(s) \} := \cWu_g(x_2(s), h_f) \cap \cWcs_g(x, h_f)$.
\end{enumerate}   
\item For each $s \in [0,1]$, set $\gamma(s) := (x_1(s), x_2(s), x_3(s))$. Then $\gamma:=\{\gamma(s)\}_{s\in [0,1]}$ is a $(\sigma, \widetilde{C}_f)$-regular family for $g$.
\eenmt

Note that for any $s \in [0,1]$, $x_2(s),x_3(s)\in \cW_g^{cu}(x',\widetilde{C}_f\sigma')$ for sufficiently large $\widetilde{C}_f$. We get $(i)$ by the continuity of $E_f^{cu}$, and by letting $\widetilde{h}_f$, $\hat \varepsilon_0$, $\hat \sigma_0$ be sufficiently small. We get $(ii)$ by the continuity of $E_f^{cs}$ and $E_f^c$; by  $\widetilde{C}_f\sigma < h < \widetilde{h}_f$; by letting $\widetilde{C}_f$  be sufficiently large, and then letting $\widetilde{h}_f,\hat \varepsilon_0,\hat \sigma_0$  be sufficiently small.
\end{proof}

\begin{construction} \label{get a set of vector fields}
	
	For any $c$-disk $\cC$ such that $\varrho(\cC)=:h \in (0, \overline{h}_{f})$, with $\overline{h}_{f}$ as in Construction \ref{given c disk get a chart}, for any $\sigma \in (0,h)$, 
	we define a collection of vector fields as follows.
	
	$\bullet$ For each $1\leq j\leq c$, let $U_j \colon (-2/3, 2/3)^{c} \times (-1, 1)^{d_u} \times (-1/3, 1/3)^{d_s} \to \R^{d}$ be a compactly supported $C^{\infty}$ divergence-free vector field such that $U_j$ restricted to $(-1/2, 1/2)^{c + d_u} \times (-1/5, 1/5)^{d_s}$ is equal to the constant vector, denoted by $E_j$, that has $1$ at $j$-th coordinate and $0$ at the others.
	Such $U_j$ always exists since $d \geq 3$.
	Moreover, we can assume that $U_j$ satisfies $\|U_j\|_{C^1} < C_{*}$ for some  constant $C_{*} = C_{*}(d) > 0$.

	For any $x_c \in \R^c$, $x_u \in \R^{d_u}$, $x_s \in \R^{d_s}$, $a_c, a_u, a_s > 0$, we denote for every $z_c \in \R^c, z_u \in \R^{d_u}, z_s \in \R^{d_s}$:
	\begin{equation*}
	P_{x_c, a_c, x_u, a_u, x_s, a_s}(z_c, z_u, z_s) = (x_c + a_cz_c, x_u + a_u z_u, x_s + a_s z_s).
	\end{equation*}
	
	Now, for any $i,j \in \{1,\dots, c\}$, any $t \in \cB_i$, we let $U^{\sigma}_{\cC,i,t,j} \colon (-h, h)^d \to \R^d$ be the vector field 
	\begin{align*}
	U^{\sigma}_{\cC, i, t , j} = U_j (P_{\underline{0^c}, h, \varphi(i,t)\sigma e_u, 2C_{\mathrm{min}}^{-1}\sigma, \underline{0^{d_s}}, \sigma})^{-1}.
	\end{align*}
	The support of $U^{\sigma}_{\cC, i, t , j}$ is contained in
	\begin{equation*}
	(-2h/3, 2h/3)^c \times (\varphi(i,t) \sigma e_u + 2C_{\mathrm{min}}^{-1}(-\sigma, \sigma)^{d_u}) \times (-\sigma/3, \sigma/3)^{d_s}.
	\end{equation*}
	Moreover, for any $z_c \in (-h/2, h/2)^c$, $z_u \in \varphi(i,t) \sigma e_u + C_{\mathrm{min}}^{-1}(-\sigma, \sigma)^{d_u}$ and $z_s \in (-\sigma/5, \sigma/5)^{d_s}$, we have
	\begin{equation*}
	U^{\sigma}_{\cC, i, t , j}(z_c, z_u, z_s) = E_j.
	\end{equation*}
	We set
	\begin{equation*}
	V^{\sigma}_{\cC, i,t, j}  := D\phi(U^{\sigma}_{\cC, i,t, j}).
	\end{equation*}
\end{construction}

By  Construction \ref{given c disk get a chart}, and the $C^1$-bound on $U_j$ above, we see that the vector field $V_{\mathcal{C},i,t,j}^\sigma$ is divergence-free and satisfies:
\begin{equation}\label{eq Vitj adapt C1}
\sigma \norm{ \partial_x V^{\sigma}_{\cC, i, t ,j}}_{X} + \norm{ V^{\sigma}_{\cC, i, t ,j}}_{X} < \widehat{C}_f,
\end{equation}
for some $C^2$-uniform constant $\widehat{C}_f >0$, independent of $\mathcal{C},i,t,j$.

\begin{rema} \label{remaaboutthesupportofthevectorfields}
By construction, it is clear that
\begin{equation*}
\mathrm{supp}_X(V^{\sigma}_{\cC, i, t, j}) \subset \phi((-2h/3,2h/3)^{c} \times (-2\sigma, 2\sigma)^{d_u}\times (-\sigma/3,\sigma/3)^{d_s}).
\end{equation*}
Thus for any $\sigma_0 > 0$, there exists $\sigma > 0$ such that for any $\cC, i, j, t$ in  Construction \ref{get a set of vector fields},
\begin{equation*}
\mathrm{supp}_X(V^{\sigma}_{\cC, i, t, j}) \subset (\cC, \sigma_0).
\end{equation*}
\end{rema}



The following lemma describes the values taken by $V^{\sigma}_{\cC, i, t, j}$ at the corners of the loops that  we constructed in  Lemma \ref{lem def regular family and chart}. 

\begin{lemma} \label{lem vector field and loop}
There exists a $C^2$-uniform constant $\kappa'=\kappa'(f)>0$ such that for any $ \rho_1 \in ( 0, \frac{1}{2}\widetilde{h}_f)$, any $ \sigma \in ( 0, \widetilde{C}_f^{-1}\rho_1)$, 
there exists  
$\mathcal{U}'=\mathcal{U}'(f,\rho_1,\sigma)$, a $C^2$ open neighbourhood of $f$, such that for any $g \in \mathcal{U}'$, for any $\big(\frac{1}{20},6\big)$-spanning $c$-family for $f$ denoted by $\mathcal{D}$, satisfying $[\underline{r}(\mathcal{D}),\overline{r}(\mathcal{D})]\subset (\rho_1,\frac 12 \widetilde{h}_f)$, there exists $\mathcal{D}'$, a $\big(\frac{1}{10},8\big)$-spanning $c$-family for $g$ with $[\underline{r}(\mathcal{D}'),\overline{r}(\mathcal{D}')]\subset (\rho_1,  \widetilde{h}_f)$  satisfying the following  property. 

For any $\mathcal{C}'\in \mathcal{D}'$,   we have $\cC' \subset (\cC, \sigma)$   for some $\cC\in \cD$, and for each $x \in \frac{1}{5}\cC'$, there exists a $(\sigma,\widetilde C_f)$-regular continuous family of $g$-loops at $x$, denoted by $\{\gamma(s) = (x_1(s), x_2(s), x_3(s))\}_{s \in [0,1]}$, such that for any $i \in \{1,\dots, c\}$, $s \in [0,1]$, $t \in \cB_i$, any $y \in   \cWc_{ g}(x,K_{f}\sigma) \cup  \cWc_{g}(x_2(s), K_{f}\sigma)\cup  \cWc_{g}(x_3(s), K_{f}\sigma)$, and any $z\in \cWc_{g}(x_1(\varphi(i,t)), K_{f}\sigma)$, by  letting $\sigma' := \widetilde{C}_{f}^{-\frac{1}{2}}\sigma$, we have 
\begin{equation}\label{v sigma 2}
V^{\sigma'}_{\cC, i, t, j}(y) = 0,\ \forall\,1 \leq j \leq c,\qquad  |\det\big((\pi_cV^{\sigma'}_{\cC, i,t, j}(z))_{j=1,\dots,c}\big)| > \kappa'.
\end{equation}
\end{lemma}
\begin{proof}
	Take $\varepsilon_1:=\hat \varepsilon_0(f,\rho_1,\sigma)$ and   $\sigma_1:=\hat \sigma_0(f,\rho_1,\sigma)\in (0,\sigma)$ given by Lemma \ref{lem def regular family and chart}. 
	
By Lemma \ref{lemma stability spanning} and Remark \ref{remarque trois} applied to $f$, and  $(6,\frac{1}{20}, \frac{1}{10},\frac 15,\rho_1,  \widetilde h_f,\sigma_1)$ in place of $(k,\theta,\theta',\theta'',\rho_m,\rho_M,\sigma)$, for all $g$ sufficiently $C^1$-close to $f$,  there exists $\cD'$, a $(\frac{1}{10},  8)$-spanning $c$-family for $g$, with $[\underline{r}(\cD'), \overline{r}(\cD')] \subset (\rho_1, \widetilde{h}_f)$, such that  for each $\cC' \in \cD'$, we have $\cC' \subset (\cC, \sigma_1)\subset (\cC,\sigma)$ and $\frac{1}{5}\cC'  \subset (\frac{1}{5}\cC,\sigma_1)\subset (\frac{1}{5}\cC,\sigma)$ for some $\cC \in \cD$. 

We can apply Lemma \ref{lem def regular family and chart} to any $c$-disk $\cC$  of $f$, since it has radius in $(\rho_1, \frac{1}{2}\widetilde{h}_{f})$, and any $x \in \frac{1}{5}\cC'$, to construct $\{\gamma(s)\}_{s \in [0,1]}$, a $(\sigma,\widetilde C_f)$-regular continuous family of $g$-loops at $x$  such that if $i,j,s,t, \sigma'$ are as in the lemma, we have
\enmt
\item $\cWc_{g}(x_i(s), K_{f}\sigma)$, $i=2,3$, are disjoint from $\phi((-h,h)^{c+d_u} \times (-\frac{\sigma'}{2}, \frac{\sigma'}{2})^{d_s})$;
\item $\cWc_{g}(x, K_{f}\sigma) \subset \phi((-h/2,h/2)^c\times (-C^{-1}_{\mathrm{min}}\sigma',C^{-1}_{\mathrm{min}}\sigma')^{d_u}\times (-\frac{\sigma'}{5},\frac{\sigma'}{5})^{d_s})$;
\item $\cWc_{ g}(x_1(\varphi(i,t)), K_f\sigma) \subset \phi\big((-h/2,h/2)^c\times (\sigma'\varphi(i,t)  e_u+\\ +(-C^{-1}_{\mathrm{min}}\sigma',C^{-1}_{\mathrm{min}}\sigma')^{d_u})\times (-\frac{\sigma'}{5},\frac{\sigma'}{5})^{d_s}\big)$.
\eenmt
By  Construction \ref{given c disk get a chart}\eqref{notation 3 4} and  Construction \ref{get a set of vector fields},  we see that  \eqref{v sigma 2} holds for some $C^2$-uniform constant $\kappa'>0$  depending only on $f,\overline{C}_f$.
\end{proof}

\section{On the prevalence of the accessibility property}\label{section generic open acc}

In this section, we fix  $r  \in \N_{ \geq 2} \cup \{\infty\}$ and  an integer $J\geq 1$.  In order to avoid repetition, we consider only the volume preserving case in the following.  The more general case is handled by repeating exactly the same  proof after replacing $\diff^r(X,\mathrm{Vol})$ by $\diff^r(X)$, $\cPH^r(X,\mathrm{Vol})$ by $\cPH^r(X)$, etc. 

Let us first give an outline of the construction in this section with an illustration in Figure \ref{fig 5}. 
Given a good $C^r-J-$family ${\bf f} := \{f_\omega\}_{\omega \in [0,1]^J}$, we will find a family $\{ \hat{\bf f}^\theta \}_{\theta \in U_1 \subset \R^I}$ of $C^r-J-$families which are perturbations of ${\bf f}$, in which most  $\hat{\bf f}^\theta$  contain a large proportion of accessible maps. More precisely, we will construct a $C^r-{(I+J)}-$family ${\hat {\bf f}} := \{ f_{(\omega, \theta)} \}_{(\omega , \theta) \in  [0,1]^J \times U_1 }$, where $U_1$ is a neighbourhood of the origin $0 \in \R^I$, such that $\{ f_{(\omega, 0)} \}_{\omega \in [0,1]^J} = {\bf f}$. 
\begin{figure}[H]
	\begin{center}
		\includegraphics [width=13.5cm]{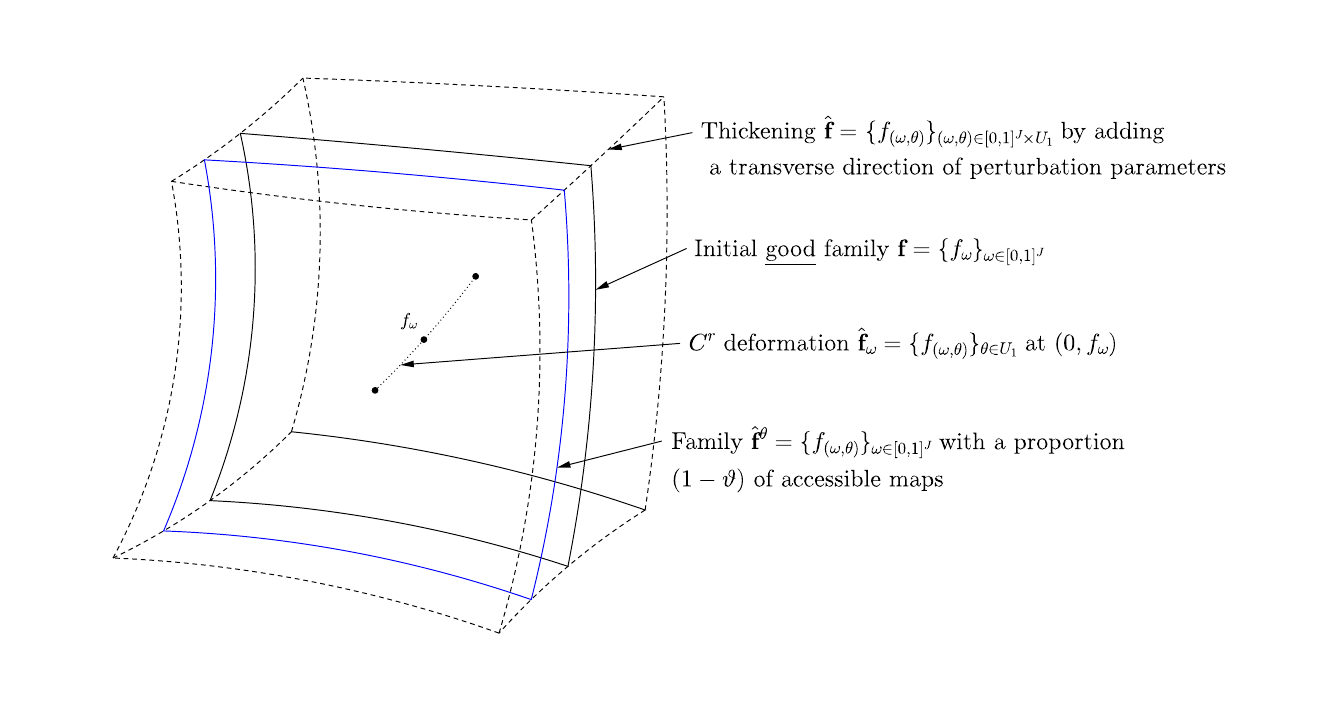}
	\end{center}
	\caption{Selection of a perturbed family $\hat{\bf f}^\theta$ with many accessible maps.} \label{fig 5}
\end{figure}
We construct $\hat{\bf f}$ in the following  way.
We apply Proposition \ref{prop slow recurrent} repeatedly to produce a well-distributed finite subset $A$ in $[0,1]^J$ such that for each parameter $a \in A$ we get a $(\frac{1}{20}, 6)$-spanning $c$-family $\cD_a$ for $f_a$, and produce by Lemma \ref{lem vector field and loop} a vector field $V^{\sigma}_{\cC, i, t, j}$ for each $\cC \in \cD_a$, $1 \leq i,j \leq c$, $t \in \cB_i$ and some small $\sigma > 0$.
We construct a $C^r$ map $V\colon [0,1]^{J} \times \R^I \times X \to TX$ by gluing together the above data in a careful way, and define $\hat{\bf f} = \{ f_{(\omega, \theta)} \}_{(\omega,\theta)\in [0,1]^J \times U_1}$ so that for each $\omega \in [0,1]^J$, $\hat{\bf f}_\omega :=\{f_{(\omega, \theta)}\}_{\theta \in U_1} $ is a $C^r$ deformation at $(0, f_\omega)$ with $I$-parameters generated by $V(\omega, \cdot, \cdot)$. By choosing $V$ carefully,  we may ensure that for a typical parameter $\omega \in [0,1]^J$, the $C^r$ deformation $\hat{\bf f}_\omega$ exhibits approximately independent perturbations for many different $su$-paths.  Together with a Fubini's argument, this will enable us to verify property $(\cP)$ for maps at all but extremely small amount of parameters within a typical family $\hat{\bf f}^\theta:= \{ f_{(\omega, \theta)} \}_{\omega  \in  [0,1]^J}$.
Notice that the maps in $\hat{\bf f}$ have uniformly bounded $C^r$-norms. Thus, by studying carefully the proofs of this section, we can see that throughout this paper we only need to use the fact that $h_f$, $\sigma_f$, $C_f$ and $\Lambda_f$ are $C^2$-uniform constants.

\subsection{Constructing perturbations for a family of diffeomorphisms}\label{Constructing perturbations for a family of diffeomorphisms}
In this subsection, we fix 
a $C^{r}-J-$family $\{f_\omega\}_{\omega \in [0,1]^{J}}$  in the space of dynamically coherent, center bunched $C^r$ partially hyperbolic diffeomorphisms on $X$.

Let $\Omega_0$ be an open set compactly supported in $(0,1)^{J}$, let $U_1$ be an open neighbourhood of the origin in $\R^{I}$ for some integer $I \geq 1$, and let $\hat{f} \colon \Omega_0 \times U_1 \times X \to X$ be a $C^{r}$ map such that  $\hat f(a,b,\cdot) \in \diff^r(X)$, for all $(a,b) \in \Omega_0 \times U_1$, and $\hat{f}(a,0,\cdot) = f_a$ for all $a \in \Omega_0$. In particular, for any $a \in \Omega_0$, the map $\hat{f}(a,\cdot) \colon U_1 \times X \to X$ is a $C^{r}$-deformation at $(0,f_a)$. We set $
T_a := T(\hat{f}(a,\cdot))$. Moreover, by applying Lemma \ref{lemma T deformation}
 to $\hat{f}(a,\cdot)$ in place of $\hat{f}$, after taking $U_1$ sufficiently small, for any $(b,x) \in U_1 \times X$, we will denote by $\nu^{a}_{b}(x,\cdot) \colon \R^I \to E^{su}_{\hat{f}(a,b,\cdot)}(x)$  the unique linear map such that
\begin{equation}\label{term 2050}
E^c_{T_a}(b,x) = Graph(\nu^{a}_{b}(x,\cdot))\oplus E^{c}_{\hat{f}(a,b,\cdot)}(x).
\end{equation}

Given an element of a $C^r-J-$family as above, the following notion  combines a global property (through spanning $c$-families) and a local one (existence of deformations which induce an infinitesimal displacement of the holonomies in many directions) which together will be useful to verify Property $(\mathcal{P})$ in Definition \ref{property cP}.

\begin{defi}[Removability] \label{def remov}
Let $\hat{f}$ be as above. Then for $\rho_m, \rho_{M}, \sigma, C, \kappa > 0$, $a \in \Omega_0$, we say that $\hat f$ is  \underline{$(\rho_m, \rho_{M},\sigma,C, \kappa)$-Removable at $a$} if the following is true.
There exists  $\cD$, a  $(\frac{1}{10},  8)$-spanning $c$-family for $f_{a}$ with $[\underline{r}(\cD), \overline{r}(\cD)] \subset ( \rho_m, \rho_{M})$, such that for each $\cC \in \cD$, for each $x \in \frac{1}{5}\cC$, there exists a $(\sigma, C)$-regular continuous family $\gamma$ of $f_{a}$-loops at $x$ with the following properties. Let $K_0,\Gamma$ be taken  as in Subsection \ref{subs disjoint squares}. For any $(i, \cB, \{s_{t}\}_{t \in \cB}) \in \Gamma$ and $(t,j) \in  \cB\times \{1,\dots,c\}$, we set $\gamma_{t,j}:= \gamma(\varphi(j, s_{t,j}))$, $z_t:= (\prod_{j=1}^cH_{f_{a}, \gamma_{t,j}})(x)$. Let $\hat{\gamma}_{t,j}$ be the lift of $\gamma_{t,}$ for $T_a$, and 
\begin{equation}\label{def Xi a x}
\Xi_{a,x} \colon \left\{
\begin{array}{rcl}
T_0U_1 &\to& \prod\limits_{t \in \cB} E^{c}_{f_{a}}(z_t) \simeq \R^{K_0 c},  \\
B &\mapsto& \left[ \pi_c\big(D\left(\prod_{j=1}^c H_{T_{a}, \hat{\gamma}_{t,j}}\right) \cdot (B + \nu^{a}_{0}(x, B))\big) \right]_{t \in \cB}.
\end{array}
\right.
\end{equation}
Then there exists a linear subspace $H \subset \R^I$ of dimension $K_0c$ such that
\begin{equation*}
|\det(\Xi_{a,x} |_{H})| > \kappa.
\end{equation*}

\end{defi}

The main goal of this subsection is the following. 

\begin{prop}\label{lower bound determinant 2}
Assume that  $\{f_\omega\}_{\omega \in [0,1]^J}$ is a good $C^r-J-$family. Then there exist constants $Q,C, \kappa_1 > 0$ such that  for any $\vartheta > 0$, 
 any sufficiently small $\widetilde{h}>0$, there exists  $\rho_1 \in (0, \frac{1}{2}\widetilde{h})$ such that for 
any sufficiently small $\sigma > 0$,  there exist 
 \begin{enumerate}
	\item an open set $\Omega_0$ compactly contained in $(0,1)^{J}$, with $\mathrm{Leb}([0,1]^{J} \backslash \Omega_0) < \vartheta$;
	\item an integer $I>0$, and an open neighbourhood $U_1$ of the origin in $\R^I$; 
	\item a $C^{r}$ map $\hat{f} \colon [0,1]^{J} \times U_1 \times X \to X$ such that
	\begin{enumerate}[label=(\roman*)]
		\item $\hat f(a,b,\cdot) \in \diff^r(X,\mathrm{Vol})$ and  $\hat{f}(a,0,\cdot) = f_a$, for all $(a,b) \in [0,1]^{J}\times U_1$; 
		\item $\norm{\hat{f}}_{C^1} < Q$;
		\item $\hat f$ is $(\rho_1,  \widetilde{h}, \sigma, C, \kappa_1)$-Removable at $a$,  for any $a \in \Omega_0$.
	\end{enumerate}
\end{enumerate}

\end{prop}

\begin{proof}
	By compactness, we can choose $\widehat{C}_1,C>0$ so that for all $a \in [0,1]^{J}$, $\widehat{C}_1> \widehat{C}_{f_a}$, $C> \widetilde{C}_{f_a}$, where  $\widehat{C}_{f_a}$ is given by  Construction \ref{get a set of vector fields};  $\widetilde{C}_{f_a}$ is given by Lemma \ref{lem def regular family and chart}. 
	
	We assume that $\widetilde{h}>0$ is sufficiently small so that for all $a \in [0,1]^{J}$, $\widetilde{h}< \widetilde{h}_{f_a}< \overline{h}_{f_a}$, where $\widetilde{h}_{f_a}$ is given by Lemma \ref{lem def regular family and chart}  and $\overline{h}_{f_a}$ is given by Construction \ref{given c disk get a chart}. Fix $\rho_1 \in (0,\frac 12 \widetilde{h})$. For any sufficiently small $\sigma>0$, we choose $\varepsilon_1,\sigma_1>0$ so that $\varepsilon_1< \hat \varepsilon_0(f_a,\rho_1,\sigma)$, $\sigma< \hat \sigma_0(f_a,\rho_1,\sigma)$ (see Lemma \ref{lem def regular family and chart}) for all $a \in [0,1]^{J}$. 
	
	Given any $C^r$ map $V\colon [0,1]^{J} \times \R^I \times X \to TX$ such that $V(a,\cdot)$ is an infinistesimal $C^r$ deformation with $I$-parameters for any $a \in [0,1]^{J}$, we associate with $V$ a $C^r$ map $\hat f\colon [0,1]^{J} \times U_1 \times X \to X$ by 
	 \begin{equation}\label{neuf trois}
	 \hat f \colon (a,b,x)\mapsto \mathscr{F}_{V(a,b,\cdot)}(1,f_a(x)),\qquad \forall\, (a,b,x) \in [0,1]^{J} \times U_1 \times X,	 \end{equation}
	 where $U_1$ is a sufficiently small neighbourhood of the origin in $\R^I$, and for any $(a,b)\in [0,1]^{J} \times U_1$, $\mathscr{F}_{V(a,b,\cdot)}\colon \R \times X \to X$ is the flow generated by $V(a,b,\cdot)$.  

To prepare for the proof of Proposition \ref{lower bound determinant 2}, we first show the following lemma.
\begin{lemma}\label{lem construct for one c family}
Set $\widehat{K}:=\sup_{a \in [0,1]^{J}} K_{f_a}+4$. Then there exist constants $R_1,\kappa_1>0$ such that the following is true. For any sufficiently small $\widetilde{h}>0$, for any $\rho_1 \in (0,\frac 12 \widetilde h)$, any sufficiently small $\sigma > 0$, there exists a constant $\lambda_1= \lambda_1(\rho_1, \sigma) > 0$ such that for any $a \in [0,1]^{J}$,  if $\cD$ is a $3\widehat{K}\sigma$-sparse $(\frac{1}{20},  6)$-spanning $c$-family for $f_a$ with
$[\underline{r}(\cD), \overline{r}(\cD)] \subset (\rho_1, \frac{1}{2}\widetilde{h})$, and if a $C^r$ map $V\colon [0,1]^{J} \times \R^I \times X \to TX$ as above satisfies
\enmt
\item $\sigma\norm{\partial_b\partial_xV}_{X} + \norm{\partial_bV}_{X} < \widehat{C}_1$ and $R_{\pm}(f_a, (\cD, 3\widehat{K}\sigma), \mathrm{supp}_X(V)) > R_1$;
\item  for $B = (B_{\cC, i, t, j})_{\cC \in \cD, i, j \in \{1,\dots, c\}, t \in \cB_i} \in \R^{2n(\cD)c^2|\cA|}$ and $\sigma_a':= \widetilde{C}_{f_a}^{-\frac{1}{2}}\sigma$,
\begin{equation*}
V(a,\cdot)|_{(\cD, 2\widehat{K}\sigma)} = \sum_{\cC \in \cD, i, j \in \{1,\dots, c\}, t \in \cB_i} B_{\cC, i, t, j} V^{\sigma_a'}_{\cC, i, t, j},
\end{equation*}
\eenmt
then for any $a' \in B(a,\lambda_1)\cap [0,1]^{J}$, the $C^r$ map $\hat f \colon [0,1]^{J} \times U_1 \times X \to X$ given by \eqref{neuf trois} is $(\rho_1,\widetilde{h},\sigma,C, \kappa_1)$-Removable at $a'$. 
\end{lemma}
\begin{proof}
	Take $K_0$ as in \eqref{def K0 K1}. 
By compactness, we can choose   $\kappa> 0$  to be sufficiently small, depending only on $\{f_\omega\}_{\omega}$,  so that for  all $a \in [0,1]^{J}$,   $\kappa< \kappa'(f_{a})$ (see Lemma \ref{lem vector field and loop}); and choose $R_1>0$, $\kappa_1>0$, depending only on $\{f_\omega\}_{\omega}$, so that for any $a \in [0,1]^{J}$, $R_1>R_0(f_a,K_0,c,\widehat{C}_1,\frac{\kappa}{2})$,  $\kappa_1<\kappa_0(f_a,K_0,c,\widehat{C}_1,\frac {\kappa}{2})$ as in  Proposition \ref{determinant for smooth deformations}.

By hypothesis (1) and  Lemma \ref{lem vector field and loop}, we can choose $\lambda_1>0$ to be sufficiently small, depending only on $\{f_\omega\}_{\omega}$, $\rho_1,\sigma,R_1$, such that for any $a, \cD$ given in  the lemma, for any $a' \in B(a,\lambda_1) \cap [0,1]^{J}$, we have $f_{a'}\in \mathcal{U}'(f_a,\rho_1,\sigma)$ (see Lemma \ref{lem vector field and loop}) and $R_\pm(f_{a'},(\mathcal{D},2\widehat K\sigma),\mathrm{supp}_X(V))>R_1$. Then we apply  Lemma \ref{lem vector field and loop} to   $\cD$, and $(f_{a'},f_a)$ in place of $(g,f)$,  to obtain $\cD'$, a $(\frac{1}{10},8)$-spanning $c$-family for $f_{a'}$ with $[\underline{r}(\cD'), \overline{r}(\cD')] \subset (\rho_1, \widetilde{h})$ such that  the conclusion of Lemma \ref{lem vector field and loop} holds.  

For any $\cC' \in \cD'$, any $x \in \frac{1}{5}\cC'$, let $\gamma$ be a $(\sigma, \widetilde C_{f_a})$-regular (hence $(\sigma,C)$-regular) continuous family of $f_{a'}$-loops at $x$ satisfying the conclusion of Lemma \ref{lem vector field and loop}.
We claim that for any $s \in [0,1]$, the vector field $V(a,\cdot)$ in the lemma is adapted to $(\gamma(s), \sigma, \widehat{C}_1, R_1)$.
Indeed, by $\widehat{K} \geq K_{f_{a'}} + 4$ and $\cC' \subset (\cD, \sigma)$, we have $\cWc_{f_{a'}}(z, K_{f_{a'}}\sigma) \subset (\cD, 2\widehat{K}\sigma)$ for any $z \in \{x, x_1(s), x_2(s), x_3(s)\}$. Then by the choice of $\lambda_1$ and \eqref{v sigma 2}, we verify (2), (3) in Definition \ref{defiadaptedtosth} for $(\gamma(s),\sigma,\widehat{C}_1,R_1)$  in place of $(\gamma,\sigma,C,R_0)$. We verify (1) in Definition \ref{defiadaptedtosth} by \eqref{eq Vitj adapt C1} and the hypothesis on $V$. This verifies the claim.

For any $(i, \cB, \{s_{t}\}_{t \in \cB}) \in \Gamma$, any $(t,j) \in  \cB\times \{1,\dots,c\}$, we define $\gamma_{t,j}$ as in Definition \ref{def remov}.
Note that \eqref{determinant 1}, \eqref{determinant 2} are satisfied by  \eqref{v sigma 2}, thus for all $\sigma > 0$ sufficiently small (depending only on $\{f_\omega\}_\omega$), we can apply Proposition \ref{determinant for smooth deformations} to $(f_{a}, K_0, \widehat{C}_1, \frac{\kappa}{2}, V(a,\cdot), \{\gamma_{t,j}\}_{\substack{t \in \cB\\ 1 \leq j \leq c}})$ in place of $(f, L, C, \kappa, V, \{\gamma_{i,j}\}_{\substack{1\leq i \leq L\\ 1 \leq j \leq c}})$, which concludes.
\end{proof}

We now continue with the proof of Proposition \ref{lower bound determinant 2}.

Let $R_1, \kappa_1 > 0$ be given by Lemma \ref{lem construct for one c family}. Take any $\vartheta>0$. 
We set $K:=(20J)^J$. 
Then by applying Proposition \ref{prop slow recurrent} to $r,J,\{f_\omega\}_\omega, K, \vartheta$ and to $(R_1, \frac{1}{2}\widetilde{h})$ in place of $(R_0, h_0)$, we  obtain a set $\Omega_1$ compactly contained in $(0,1)^{J}$ and constants $N_0,\rho_0 \in (0, \frac{1}{2}\widetilde{h}),\rho_1 \in (0, \rho_0),\sigma_0,\lambda_0>0$ satisfying the conclusion of Proposition \ref{prop slow recurrent}. 
For any sufficiently small $\sigma > 0$ in  Proposition \ref{lower bound determinant 2}, we let $\lambda_1 = \lambda_1(\rho_1, \sigma)$ be taken as in Lemma \ref{lem construct for one c family}.

Let $T > 0$ be some large integer such that $\lambda:= \frac{100 J}{T} < \min(\lambda_0,  \lambda_1)$, and set $W_0:= (-\frac{1}{2T}, \frac{1}{2T})^{J}$. 
We choose points $\{a_1, \dots, a_{M_0}\} \subset \Omega_1$, $M_0\leq T^J$, such that    $W_i:=a_i + 2W_0$ is compactly contained in $[0,1]^{J}$,   the collection $\{W_i\}_{1 \leq i \leq M_0}$ forms an open cover of $\Omega_1$, and the cover multiplicity of $\{a_i+10 J W_0\}_{1 \leq i \leq M_0}$ is bounded by $K=(20J)^J$. 

Let $\Theta \colon \R^J \to [0,1]$ be a smooth function such that $\Theta |_{2 W_0} \equiv 1$ and $\mathrm{supp}(\Theta)\subset 3 W_0$. Let $\Theta_{i}:= \Theta(\cdot - a_i)$ for all $1 \leq i \leq M_0$, so that $\mathrm{supp} (\Theta_i) \subset a_i + 3W_0$. 

For each $1 \leq i \leq M_0$, we will inductively define a $(\frac{1}{20},  6)$-spanning $c$-family for $f_{a_i}$, denoted by $\cD_i$, in the following way.
Assume that for some $k \in \{1,\dots, M_0\}$, and for all $1 \leq i \leq k-1$, we have defined $\cD_i$ satisfying:
\begin{enumerate}
\item $\cD_i$ is a $\sigma_0$-sparse $(\frac{1}{20},6)$-spanning $c$-family for $f_{a_i}$;
\item $[\underline{r}(\cD_i), \overline{r}(\cD_i)] \subset (\rho_1, \rho_0)$;
\item $n(\cD_i) < N_0$.
\end{enumerate}
This assumption  is always true for $k=1$.

Let $\{ i_1, \dots, i_l \}$ be the set of all indices $p \in \{1,\dots, k-1\}$ such that $W_p \subset B(W_k,\frac{5c}{T})$. By the choice of $\{a_i\}$, we have  
$l < K$. Then we can apply Proposition \ref{prop slow recurrent} to obtain a spanning $c$-family for $f_{a_k}$, denoted by $\cD_k$, such that (1), (2), (3) above are true for $i=k$. Moreover, for any $1 \leq j \leq l$, $(\cD_{i_j}, \sigma_0)$ is disjoint from $(\cD_k, \sigma_0)$, and for all $a \in W_k \subset B(a_k, \lambda_0)$,  we have $R(f_a, (\cD_k, \sigma_0), (\{\cD_{i_j}\}_{j=1}^{l}, \sigma_0)) >R_1$ and $R_{\pm}(f_a, (\cD_k, \sigma_0)) > R_1$. 

Having constructed $\{\cD_i\}_{1 \leq i \leq M_0}$,  for each $1 \leq k \leq M_0$, we set $I_k:=n(\cD_k)c \sum_{i=1}^c|\cB_i|=2n(\mathcal{D}_k)c^2|\mathcal{A}|$, set  $\sigma_{a_k}':=\widetilde{C}_{f_{a_k}}^{-\frac{1}{2}}\sigma$, and let $V^{(k)} \colon \R^{I_k} \times X \to TX$ be the infinitesimal $C^r$ deformation defined as follows:
\begin{equation}\label{term 2030} 
V^{(k)}(B,\cdot):= \sum_{\cC \in \cD_k, i,j \in \{1,\dots, c\}, t \in \cB_i} B_{\cC, i, t, j} V^{\sigma_{a_k}'}_{\cC, i, t, j},\ \forall\, B = (B_{\cC,i,t,j}) \in \R^{I_k}.
\end{equation}
By Remark \ref{remaaboutthesupportofthevectorfields}, for all sufficiently small $\sigma > 0$, we have $\mathrm{supp}_X(V^{(k)}) \subset (\cD_k, \sigma_0)$.
Let 
 $I:= \sum_{k=1}^{M_0} I_k$.
For any $B = (B_k)_{k=1}^{M_0} \in \R^{I}$, where  $B_k \in \R^{I_k}$ for each $1 \leq k \leq M_0$, we define a $C^r$ map $V \colon [0,1]^{J} \times \R^{I} \times X \to TX$ as follows:
\begin{equation*}\label{def champ de vecteur a B x}
V(a,B,\cdot):= \sum_{k=1}^{M_0} \Theta_k(a) V^{(k)}(B_k,\cdot).
\end{equation*}
By definition, the map $V$ is linear in $B$. For each $a \in [0,1]^{J}$, let $\{i_1, \dots, i_{l}\}$ be the set of indices $p$ such that $\Theta_p(a) \neq 0$. Note that $l \leq K$. Moreover, by construction, we see that the sets  $(\cD_{i_j}, \sigma_0)$ are mutually disjoint for $j \in \{1,\dots,l \}$, and 
\begin{equation}\label{term supp v}
\mathrm{supp}_{X}(V(a,\cdot)) \subset \bigsqcup_{j=1}^l (\cD_{i_j}, \sigma_0).
\end{equation}
By the choice of $\widehat{C}_1$ above,  \eqref{eq Vitj adapt C1}, \eqref{term supp v}, and since $\mathcal{D}_k$ is $\sigma_0$-sparse for all $1 \leq k \leq M_0$, 
\begin{equation} \label{term lip v}
\sigma \norm{\partial_b\partial_xV}_{X} + \norm{\partial_bV}_{X} < \widehat{C}_1.
\end{equation}
Again, by the above construction, we see that
\begin{equation}\label{term rec supp}
R_{\pm}(f_a, (\{\cD_{i_j}\}_{j=1}^l,\sigma_0)) > R_1.
\end{equation}
Take any $k \in \{1,\dots,M_0\}$. By construction, $\mathcal{D}_k$ is a $\sigma_0$-sparse $\big(\frac{1}{20},6\big)$-spanning $c$-family for $f_{a_k}$, and for any $a \in W_k$, for each $B=(B_l)_{1\leq l\leq M_0} \in \R^I$, we see that
\begin{equation}\label{term v d}
V(a,B,\cdot)|_{(\cD_{k}, \sigma_0)}=  V^{(k)}(B_{k},\cdot)|_{(\cD_{k}, \sigma_0)}.
\end{equation}

We define $\hat f$ by \eqref{neuf trois} for $V$ given as above. It is clear that $\hat f$ is $C^r$, and for each $a \in [0,1]^{J}$, $\hat f(a,\cdot)\colon U_1 \times X \to X$ is the $C^r$ deformation at $(0,f_a)$ generated by $V(a,\cdot)$. By \eqref{neuf trois}, 
\eqref{term lip v},  and Lemma \ref{lem compare T V}(1), we obtain $\norm{\hat{f}}_{C^1} < Q$ for some $Q > 0$ depending only on $\{f_a\}_{a}$ and $\widehat{C}_1$, after possibly reducing the size of $U_1$.

By \eqref{term 2030}-\eqref{term v d}, for each $1 \leq k \leq M_0$, for any $a' \in W_k \subset B(a_k,\lambda_1)$,  
the assumptions of Lemma \ref{lem construct for one c family} are satisfied for all sufficiently small $\sigma>0$, hence   $\hat{f}$ is $(\rho_1, \widetilde{h}, \sigma,C,\kappa_1)$-Removable at $a'$. 
The set $\Omega_0:= \bigcup_{i=1}^{M_0} W_i$ is compactly contained in $[0,1]^J$. By our choices of $\{a_i\}$, we have  $\Omega_1 \subset \Omega_0 \subset [0,1]^{J}$, thus it is clear that $\mathrm{Leb}([0,1]^{J} \backslash \Omega_0)<\vartheta$. Then $\hat f$ is $(\rho_1, \widetilde{h}, \sigma,C, \kappa_1)$-Removable at $a$ for all $a \in \Omega_0$.   This concludes the proof.
\end{proof}

\subsection{Getting accessibility by perturbation}\label{Getting accessibility by perturbation}

In this subsection, we  fix a map $f_0 \in \cPH^r(X, \mathrm{Vol})$ 
which is dynamically coherent and center bunched.

Let $f\in \mathbb{U}(f_0) \cap \diff^{r}(X, \mathrm{Vol})$ ($\mathbb{U}(f_0)$ is defined in Notation \ref{choosingneighbourhood}). Let $\cC$ be a $c$-disk of $f$ with radius $h$ in $(0, \overline{h}_f)$, and set $\phi=\phi(\cC)$ (see Construction  \ref{given c disk get a chart}). Let $\hat{f} \colon U \times X \to X$ be a $C^r$ deformation at $(a,f)$, and set $T = T(\hat{f})$. Then for $C > 0$, $\sigma \in ( 0, \frac{\overline{\sigma}_f}{2})$, $x \in \frac 15 \cC$, and $\gamma$, a $(\sigma, C)$-regular continuous family of $f$-loops at $x$, let $\hat{\gamma}$ be given by \eqref{term 2040}, and let $\widehat{\psi}$ be given by \eqref{widehatpsidefi}. We let $\sigma > 0$ be sufficiently small, and by \eqref{relationwidehatpsipsifxgamma}, 
we have $\pi_X \widehat{\psi}(b, y, s) \in \phi((-h,h)^{d})$ for all $(b,y,s) \in \cWc_T((a,x),\overline{\delta}_{a,T}) \times [-1,2]^{c}$. Let $\Pi_c$ be as in Construction \ref{given c disk get a chart}(4); we define  
\begin{equation}\label{def hat Phi}
\Phi \colon \left\{
\begin{array}{rcl}
 \cWc_T((a,x),\overline{\delta}_{a,T}) \times [-1,2]^{c} &\to&  \R^{c}, \\
(b,y,s)&\mapsto& \Pi_c\phi^{-1}\pi_X\widehat{\psi}(b,y,s),
\end{array}
\right.
\end{equation}
where $\Pi_c \colon \R^{d} \simeq \R^{c}  \times \R^{d_u} \times \R^{d_s} \to \R^{c}$ denotes the canonical projection.

\begin{lemma}\label{remark theta holder}
Let $f, \hat{f}, \cC, \gamma, \hat{\psi}$ be as above. Then there is a $C^2$-uniform constant $\widehat C_0 = \widehat C_0(f)> 0$ such that, after possibly reducing the size of $U$, the following is true:
\enmt 
\item  for any $s \in [-1,2]^c$, the map $(b,y) \mapsto \Phi(b,y,s)$ is $C^1$, and $D\Phi(b,y,s)$ is uniformly continuous, uniformly bounded by $\widehat C_0$;
\item if  $c \geq 2$, there is a $C^2$-uniform constant $\theta_0 = \theta_0(f_0) \in \big(\frac{c-1}{c}, 1\big)$ such that for any $(b,y) \in \cWc_{T}((0,x), \overline{\delta}_{a,T})$, the map
 $s \mapsto \Phi(b,y,s)$ has $\theta_0$-H\"older norm less than $\widehat C_0$.
 \eenmt
\end{lemma}
We remark that Lemma \ref{remark theta holder}(2) is needed  to \lq\lq discretize\rq\rq \  Property $(\mathcal{P})$ when $c \geq 2$. 

\begin{proof}
Point (1) follows from the fact that $f$ is $C^2$, center bunched, and Lemma \ref{lemma T deformation}. Point (2) follows from Lemma \ref{thetatheta}.
\end{proof}

The main technical result of this section is the following. It provides estimates on the volume of ``bad'' parameters under some removability condition.

\begin{prop} \label{construct trans diffeo 2} 
Let $\{f_\omega\}_{\omega \in [0,1]^{J}}$ be a good $C^{r}-J-$family in $\mathbb{U}(f_0) \cap \diff^{r}(X, \mathrm{Vol})$.
For any $Q,C,  \kappa_1 > 0$, all sufficiently small $h  > 0$, for any $\rho_1 \in (0, h)$,  and for all sufficiently small $\sigma > 0$, the following is true. Assume that there exist  an open set $\Omega_0$ compactly supported in $(0,1)^{J}$;   and integer $I > 0$;  an open neighbourhood $U_1$ of the origin in $\R^{I}$; and a $C^{r}$ map $\hat{f} \colon [0,1]^{J} \times U_1 \times X \to X$, such that
	\begin{enumerate}[label=(\roman*)]
		\item $\hat f(a,b,\cdot)\in \diff^r(X,\mathrm{Vol})$ and  $\hat{f}(a,0,\cdot)=f_a$, for all $(a,b) \in [0,1]^{J}\times U_1$;
		\item $\norm{\hat{f}}_{C^1} < Q$; 
		\item $\hat f$ is $(\rho, h,\sigma, C,\kappa_1)$-Removable at $a$, for all $a \in \overline{\Omega_0}$. 
	\end{enumerate}


Then for any sufficiently small $\epsilon > 0$, any  $\delta > 0$, there exists a subset $\mathcal{E} = \mathcal{E}(\epsilon, \delta) \subset  \Omega_0 \times U_1$ of the parameter space such that
 $\mathrm{Leb}(\mathcal{E}) < \delta$, and  for any $(a,b) \in (\Omega_0 \times (U_1 \cap B(0,\epsilon))) \backslash \mathcal{E}$, $\hat{f}(a,b,\cdot)$ is $C^1$-stably accessible.

\end{prop}

\begin{proof}
	
We only detail the case where $c = 2$. We will sketch the adaptation needed for $c = 1$ at the end of the proof.

Let us assume for now that $c = 2$. Consequently, either \ref{condition ae} or \ref{condition be} holds.
In the following, we take $\theta = \theta_0(f_0)$ as in Lemma \ref{remark theta holder}, and set $K_0:=K_0(c,\theta)$ as in \eqref{def K0 K1}. By Lemma \ref{remark theta holder},  $\theta>\frac{c-1}{c}$ and thus $K_0\geq 2$. 

Let $\hat{f}, Q,C, \kappa_1, h, \rho_1, \sigma$ be as in the proposition. Let $T \colon [0,1]^{J} \times U_1 \times X \to [0,1]^{J} \times U_1 \times X$ be the $C^{r}$ map
$T\colon(a,b,x)\mapsto (a,b,\hat{f}(a,b,x))$.

For any $a \in \overline{\Omega_0}$, $\hat{f}$ can be regarded as a $C^r$ deformation at $((a,0), f_{a})$ with $J+I$ parameters. Let $\nu_{a,0}(x,\cdot) \colon \R^{J+I}  \to E^{su}_{f_a}(x)$ be the (unique) linear map given by Lemma \ref{lemma T deformation}.  
Set $T_a := T(\hat{f}(a,\cdot))$ and let $\nu^{a}_{0}(x,\cdot)\colon \R^{I} \to E^{su}_{f_a}(x)$ be the unique linear map satisfying \eqref{term 2050} for $b=0$.
It is direct to see that $\nu^{a}_0(x, B) = \nu_{a,0}(x, \{0\}^{J} \times B)$ for all $x \in X, B \in \R^{I}$. In the following we tacitly use the inclusion $\R^{I} \subset \{0\}^{J} \times \R^{I}$ and for any $B \in \R^{I}$, we abbreviate $\nu_{a,0}(x, \{0\}^{J} \times B)$ as $\nu_{a,0}(x, B)$. 

Fix $a \in \overline{\Omega_0}$. By the hypothesis of $(\rho_1, h, \sigma,C,\kappa_1)$-Removability, we can choose $\cD = \cD(a)$, a $(\frac{1}{10},8)$-spanning $c$-family for $f_a$, and for any $\cC \in \cD$, any $x \in \frac{1}{5}\cC$, we let $\gamma = \gamma(a,x)$ be a $(\sigma, C)$-regular continuous family of $f_a$-loops at $x$. 
By compactness, we can take a small constant $\overline{\delta} \in (0, \min(\overline{\delta}_{(a',0),T}, \overline{\delta}_{0, T_{a'}}))$  where $\overline{\delta}_{(a',0),T}, \overline{\delta}_{0,T_{a'}}$ are given by Definition \ref{lift of loop for deformation}. Let $\hat \gamma$ be the lift of $\gamma$ for $T$. Take $\sigma>0$ sufficiently small, so that the map  associated to $ \gamma$ and $T$ as in \eqref{def hat Phi}, denoted by 
 $\Phi\colon \cW^c_T((a,0,x), \overline{\delta}) \times [-1,2]^{c} \to \R^c$ is well-defined. For each $(i, \cB, \{s_t\}_{t \in \cB}) \in \Gamma$, set
\begin{equation}\label{def Psi}
\Psi  = \Psi_{a, \cC, x, i, \cB, \{s_t\}}\colon\left\{
\begin{array}{rcl}
\cW^c_T((a,0,x), \overline{\delta}) &\to& \R^{K_0c}, \\
(a', b',y)&\mapsto& (\Phi(a', b', y, s_t))_{t \in \cB}.
\end{array}
\right.
\end{equation}
By Lemma \ref{remark theta holder}(1), we can
differentiate $\Psi$ at $(a,0,x)$, 
and obtain for each $B \in \R^I$:
\begin{align*}
D\Psi((a, 0, x),B+\nu_{a,0}(x, B)) &= \big(D \Phi((a, 0, x, s_t),B + \nu_{a,0}(x,B))\big)_{t \in \cB} \\ 
&= \big(\Pi_cD\phi^{-1}\pi_XD \big(\prod_{j=1}^{c} H_{T, \hat{\gamma}_{t,j}}\big)(B + \nu_{a,0}(x, B))\big)_{t \in \cB},
\end{align*}
where $\gamma_{t,j}=\gamma(\varphi(j,s_{t,j}))$, and $\hat \gamma_{t,j}$ is the lift of $\gamma_{t,j}$ for $T$. Let $\widetilde{\gamma}_{t,j}$ be the lift of $\gamma_{t,j}$ for $T_a$. Then by definition and by Lemma \ref{prop holon map and nu}, we obtain 
\begin{equation*}\label{term 2060}
\pi_XD \big(\prod_{j=1}^{c} H_{T, \hat{\gamma}_{t,j}}\big)(B + \nu_{a,0}(x, B)) 
= \pi_cD \big(\prod_{j=1}^{c} H_{T_a, \widetilde{\gamma}_{t,j}}\big)(B + \nu^{a}_{0}(x, B)) + \nu^{a}_{0}(z, B), 
\end{equation*}
where we have set $z := \prod_{j=1}^{c} H_{f_a, \gamma_{t,j}}(x)$.

Let $\zeta > 0$ be a small constant to be determined. 
Let $h > 0$ be sufficiently small such that for any $a' \in [0,1]^{J}$, we have $h < \overline{h}_{f_{a'}, \zeta}$ (as in Construction \ref{given c disk get a chart}(5)). Then by Lemma \ref{lem property of nu}\eqref{lem a priori bound for nu},  Construction \ref{given c disk get a chart} and hypothesis $(ii)$, there exists a constant $D_1>0$ depending only on $\{f_{\omega}\}_{\omega\in [0,1]^{J}}$ such that  for any $B \in \R^{I}$,
\begin{equation}\label{term 2080}
\norm{\Pi_cD\phi^{-1}\nu^{a}_{0}(z, B)} \leq D_1\zeta Q \norm{B}.
\end{equation}

By $(\rho_1, h, \sigma, C, \kappa_1)$-Removability at $a$, there exists a linear subspace $H \subset \R^{I}$ of dimension $K_0c$ such that 
\begin{equation}
\Big|\det\big(H \ni B \mapsto \pi_cD\big(\prod_{j=1}^c H_{T_{a}, \widetilde{\gamma}_{t,j}}\big) (B + \nu^{a}_{0}(x, B))\big)_{t \in \cB}\Big| > \kappa_1.
\end{equation}
Then by \eqref{term 2080}, we can choose  $\zeta>0$  sufficient small, depending only on  $(Q,  \kappa_1, \{f_{\omega}\}_{{\omega} \in [0,1]^{J}})$, such that for some constant $D_2>0$ depending only on $\{f_{\omega}\}_{{\omega}\in [0,1]^{J}}$, it holds
\begin{equation*}
\big|\det(H \ni B \mapsto D\Psi((a, 0,x) ,B + \nu_{a,0}(x, B)) \in \R^{K_0c})\big| > D_2^{-1}\kappa_1.
\end{equation*}

Now, by Lemma \ref{remark theta holder} and the pre-compactness of $\Omega_0$,  $D\Psi$ is uniformly continuous, with norms uniformly bounded for all choices of  
$a, \cC, x, i, \cB, \{s_t\}$.  Then, by possibly reducing the size of $\overline{\delta}$ independently of the choices of $a, \cC, x, i, \cB, \{s_t\}$, we can assume that for any $(a', b',y) \in \cW^c_T((a,0,x), \overline{\delta})$,  there exists a subspace $H' \subset T_{a', b'}(\Omega_0 \times U_1)$ of dimension $K_0c$ such that 
\begin{equation}\label{lower bound for nearby param}
|\det(D\Psi(a,' b', y)|_{H'}) | > \frac{1}{2}D_2^{-1} \kappa_1.
\end{equation}

By compactness, for any $\cC \in \cD$, we can choose a finite set $\cal A(a, \cC) \subset \frac{1}{5}\cC$  s.t.
\begin{equation}\label{cV a cC}
\cV(a, \cC):= \bigcup_{x \in \cal A(a, \cC)} \cWc_T((a, 0, x), \overline{\delta})
\end{equation}
is an open neighbourhood of $\{a\}\times \{0\} \times \frac 15\cC$ in $\cWc_T$.

 By compactness, Lemma \ref{lemma stability spanning} and Remark \ref{remarque trois} (for $f=f_a,\mathcal{D},(k,\theta,\theta',\theta'',\rho_m,\rho_M)=(8,\frac{1}{10},\frac{1}{9},\frac{1}{8},\rho_1,h)$ and $\sigma < \frac 12 d(\mathcal{V}(a,\mathcal{C})^c,\{a\}\times \{0\}\times \frac 15 \mathcal{C})$), there exists a constant $\delta_0 > 0$ (independent of the choices of $a, \cC, x, i, \cB, \{s_t\}$) such that for any $(a',b') \in B(a, \delta_0) \times (U_1 \cap B(0,\delta_0))$, there exists  $\cD'$, a $(\frac{1}{9},10)$-spanning $c$-family for $\hat{f}(a',b',\cdot)$, such that  for any $\cC' \in \cD'$,  $\{a'\} \times \{b'\} \times \frac{1}{8}\cC' \subset \cal V(a, \cC)$ for some $\cC \in \cD$.
Without loss of generality, we assume that $U_1 \subset B(0,\delta_0)$ and  set 
$\cU(a):= B(a,\delta_0) \times U_1$.

Now,  by $\Omega_0\subset [0,1]^{J}$, we can find a finite set $\cal K \subset \Omega_0$ such that
\begin{equation}\label{U1 aK Ua}
\Omega_0 \times U_1 \subset \cup_{a \in \cal K} \, \cU(a).
\end{equation}

By \eqref{def K0 K1} we have  $\frac{\theta  (cK_0 -2c - 1)}{(c-1)K_0} > 1$. Let $\beta \in \big(0, \min(\frac{\theta  (cK_0 -2c -1)}{(c-1)K_0} - 1, 1) \big)$, 
so that $-(c-1)K_0\frac{1+ \beta}{\theta} + cK_0 - 2c > 1$. Then, choose $\eta > 0$ small enough such that
\begin{equation}\label{defeta}
\upsilon:=-K_0\frac{1+ \beta}{\theta}(c-1) + K_0 c - 2c - (K_0+1)\eta  -1>0.
\end{equation}
For any sufficiently small $\delta > 0$, for each $i \in \{1,\dots, c\}$, for any $t \in \cB_i$, let $\cN_{t,i}$ be a $\delta^{\frac{1 + \beta}{\theta}}$-net in $[-1,2]^{i-1} \times \{t\} \times [-1,2]^{c-i}$ such that $|\cN_{t,i}| < \delta^{-\frac{1+\beta}{\theta}(c-1) - \eta}$.

We denote by $\Sigma$ the diagonal of $(\R^{c})^{K_0}\simeq \R^{K_0c}$, that is,
\begin{equation}\label{definition diagonal}
\Sigma:= \{ (y,\dots,y) \in \R^{K_0c}\ \vert\ y \in \R^c \},
\end{equation}
and for any $\delta > 0$, we let $\Sigma_\delta$ be the $\delta$-neighbourhood of $\Sigma$ defined by
\begin{align*}
\Sigma_{\delta}:= \{ (y_i)_{1\leq i \leq K_0} \in (\R^{c})^{K_0}\ \vert\ \exists\, y \in \R^{c}, |y_i - y| < \delta,\ \forall\,  1\leq i\leq K_0 \}.
\end{align*}

For  any $a \in \cal K$, let $\cD=\cD(a)$ be the $(\frac{1}{10},8)$-spanning $c$-family for $f_a$ given above. For any $\cC \in \cD$, $x \in \cal A(a,\cC)$ and $(i, \cB, \{s_t\}_{t \in \cB}) \in \Gamma$,   set $\Psi:=\Psi_{a, \cC, x, i, \cB, \{s_t\}}$. By \eqref{lower bound for nearby param}, 
the map $D\Psi$ 
is a submersion from $\cWc_T((a, 0, x), \overline{\delta})$ to its image, and  is uniformly transverse to $\Sigma$, i.e., whenever $w=(a',b',y)\in \cWc_T((a, 0, x), \overline{\delta})\cap \Psi^{-1}(\Sigma)$, 
$$
T_{\Psi(w)} \Sigma+ D\Psi(T_w \cWc_T((a, 0, x), \overline{\delta}))\simeq \R^{K_0 c}.
$$
Therefore, $\Psi_{a, \cC, x, i, \cB, \{s_t\}}^{-1}(\Sigma)$ is a submanifold of $\cW^c_T((a,0,x), \overline{\delta})$ of dimension\footnote{See   \cite[Theorem 3.3]{Hirsch}; by transversality, $\Psi^{-1}(\Sigma)\subset \cWc_T((a, 0, x), \overline{\delta})$ has same codimension as $\Sigma$ in $\R^{K_0c}$. Here, transversality is w.r.t. the parameter $b'$, and by \eqref{lower bound for nearby param}, it is uniform in variables $a,x$, which gives a uniform bound in \eqref{estimate uniform transv npsi} on the number of balls needed to cover 
$\Psi^{-1}(\Sigma_\delta)$.} $J + I + 2c - K_0c$. Besides, 
by uniform transversality, there exists $\delta_1>0$ independent of the choice of $a, \cC, x, i, \cB, \{s_t\}$ such that for any $0<\delta<\delta_1$, we have
\begin{align}\label{estimate uniform transv npsi}
\mathscr{N}(\Psi_{a, \cC, x, i, \cB, \{s_t\}}^{-1}(\Sigma_{\delta}), \delta) < \delta^{K_0c-2c - I - J - \eta},
\end{align}
where for any set $\mathcal{S}$, 
$\mathscr{N}(\mathcal{S},\delta)$ is the minimal number of $\delta$-balls required to cover $\mathcal{S}$.
For any $0<\delta<\delta_1$, let $\mathcal{E}=\mathcal{E}(\delta)$ be the subset of ``bad'' parameters in 
$\Omega_0 \times U_1$:
\begin{equation}\label{def Omega}
\mathcal{E}:= \bigcup_{\substack{ a \in \cal K,\ \cC \in \cD, \\ x \in \cal A(a,\cC), \\ (i, \cal B, \{s_t\}_{t \in \cal B}) \in \Gamma\ s.t.\ \forall\, t \in \cB,\  s_t \in \cal N_{t,i} }} \pi_{[0,1]^{J} \times U_1}(\Psi_{a, \cC, x, i, \cB, \{s_t\}}^{-1}(\Sigma_{\delta})).
\end{equation}
Since in the above collection, only the last item, $\mathcal{N}_{t,i}$, depends on $\delta$, there exists a constant $D_3>0$ 
such that for any $0< \delta<\delta_1$,
\begin{equation} \label{eq NEdelta}
\mathscr{N}(\mathcal{E}, \delta) < D_3   \delta^{-K_0\frac{1+\beta}{\theta}(c-1) - K_0 \eta} \delta^{K_0 c - 2c - I  - J - \eta }=(D_3 \delta^\upsilon) \delta^{-I-J+1}.
\end{equation}
By  \eqref{defeta},  there exists $0< \delta_2 < \delta_1$ such that $D_3 \delta_2^\upsilon<1$. We deduce that $\mathrm{Leb}(\mathcal{E}) \leq (2\delta)^{J+I} \mathscr{N}(\mathcal{E}, \delta) <   \delta$ for  all $0<\delta<\delta_2$.

We claim that for all sufficiently small $\epsilon > 0$, for all sufficiently small $\delta > 0$, and any $(a,b) \in (\Omega_0 \times (U_1 \cap B(0,\epsilon))) \backslash \mathcal{E}$,  $\hat{f}(a, b, \cdot)$ is $C^1$-stably accessible. This would finish the proof for the case $c\geq2$. 

Indeed, by \eqref{U1 aK Ua}, for any $(a,b) \in (\Omega_0 \times (U_1 \cap B(0,\epsilon))) \backslash \mathcal{E}$, there exists $a_0 \in \cal K$ such that $(a,b) \in \cU(a_0)$. Let $\cD_0=\cD(a_0)$.  
Then by the definition of $\cU(a_0)$, there exists a $(\frac{1}{9},10)$-spanning $c$-family for $\hat{f}(a,b,\cdot)$, denoted by $\cD$, such that for each $\cC \in \cD$, there exists $\cC_0 \in \cD_0$ such that $\{a\} \times \{b\} \times \frac{1}{8}\cC \subset \cV(a_0, \cC_0)$.
Then for each $x \in \frac{1}{8}\cC$, by \eqref{cV a cC}, there exists $x_0 \in \cal A(a_0, \cC_0)$, such that $(a,b,x) \in \cWc_T((a_0, 0, x_0), \overline{\delta})$.

\begin{claim} 
	For any sufficiently small $\delta>0$, the following is true. 
For any $(i, \cB, \{s_t\}_{t \in \cB}) \in \Gamma$, take $\Psi  := \Psi_{a_0, \cC_0, x_0, i, \cB, \{s_t\}}$ as in \eqref{def Psi}. Then, $\Psi(a,b,x) \notin \Sigma$. 
\end{claim}
\begin{proof}
Indeed, for any $(i, \cB, \{s_t\}_{t \in \cB}) \in \Gamma$, there exists $\{w_t\}_{t \in \cB}$ such that for all $t \in \cB$, $w_t \in \cal N_{t,i}$ and $|s_t   - w_t | < \delta^{\frac{1 + \beta}{\theta}}$.
Since $(a,b) \notin \cal E$, and by \eqref{def Psi} and \eqref{def Omega}, there exist $t,t' \in \cB$ such that $| \Phi(a, b, x, w_{t}) - \Phi(a, b, x, w_{t'})| > \delta$, where $\Phi$ is defined as in \eqref{def hat Phi} for $((a_0, 0), x_0)$ in place of $(a,x)$. 
By Lemma \ref{remark theta holder}, we get 
$|\Phi(a,b,x, w_{t}) - \Phi(a,b,x, s_{t})|$, 
$|\Phi(a,b,x, w_{t'}) - \Phi(a,b,x, s_{t'})|< D_4 \delta^{1+\beta}$
for some constant $D_4>0$ independent of $\delta$.
The claim follows, since for sufficiently small $\delta>0$, we then have 
$
| \Phi(a, b, x, s_{t}) - \Phi(a, b, x, s_{t'})| > \delta - 2D_4\delta^{1+\beta} > 0$.
\end{proof}

Let $\gamma_0=\gamma(a_0,x_0)$ be the $(\sigma, C)$-regular continuous family of $f_{a_0}$-loops at $x_0$ associated to $(a_0,x_0)$. Since $\overline{\delta} < \overline{\delta}_{(a,0), T} < \delta_{(a,0), T}$ (see Definition \ref{lift of loop for deformation}) and $(a,b,x) \in \cWc_{T}((a_0,0,x_0),\overline{\delta})$,  let  $\gamma_{a,b,x}$ be the continuous family of $\hat{f}(a,b,\cdot)$-loops at $x$ associated to $\gamma_0$ according to Definition \ref{lift of loop for deformation}.   
By 
\eqref{widehatpsidefi}, \eqref{relationwidehatpsipsifxgamma}, \eqref{def hat Phi} and \eqref{def Psi}, we see that $\hat{f}(a,b,\cdot)$ satisfies $(\mathcal{P})$ (for $(\theta,\theta',k)=(\frac 19,\frac 18,10)$, $\mathcal{D}$,   $\{\gamma_{a,b,x}\}_{x \in \frac{1}{8}\cC,\ \cC \in \cD}$ and $\psi = \psi(\hat{f}(a,b,\cdot), x, \gamma_{a,b,x})$ as in \eqref{psifxgammadefi}. Thus,
our claim follows from Proposition \ref{lemmappointlooptostablevalue}. 

Now we consider the case where $c=1$.  We can just choose $K_0=K_1=5$ in \eqref{def K0 K1}.  It suffices to notice that for each $i \in \{1,\dots, c\}$, for each $t \in \cB_i$, the set $[-1,2]^{i-1} \times \{t\} \times [-1,2]^{c-i}$ is reduced to the singleton $\{t \}$. Thus we can choose in the above proof that for any $\delta > 0$,  $\cN_{t,i} = \{t\}$. It is straightforward to verify that the above proof for $c \geq 2$  (below \eqref{defeta})  carries over to the case $c=1$. For instance, \eqref{eq NEdelta} becomes
\aryst
\mathscr{N}(\mathcal{E}, \delta) < D_3    \delta^{K_0 c - 2c - I  - J - \eta }=(D_3 \delta^{\upsilon'}) \delta^{-I-J+1}
\earyst
where 
\aryst
\upsilon'=  K_0 c - 2c - \eta  -1>0.
\earyst
This finishes the proof.
\end{proof}

\section{The proof of Theorem \ref{main thm 2}}
Combining Propositions \ref{lower bound determinant 2} and \ref{construct trans diffeo 2}, we are ready to prove Theorem \ref{main thm 2}.

\begin{proof}[Proof of Theorem \ref{main thm 2}]
We only detail the volume preserving case, for the proof of the other one follows the same line  after replacing $\diff^{r}(X,\mathrm{Vol})$ by $\diff^{r}(X)$.

By Notation \ref{notation 1}, for any map $f$ in Theorem \ref{main thm 2}, $f$ is dynamically coherent, center bunched, 
and satisfies   \ref{condition ae} or \ref{condition be}. Set $\cU:= \mathbb{U}(f) \cap \diff^{r}(X,\mathrm{Vol})$ (see Notation \ref{choosingneighbourhood}), and
let $\{f_a\}_{a \in [0,1]^{J}}$ be a good $C^{r}-J-$family of diffeomorphisms in $\cU$.  By Proposition \ref{lower bound determinant 2} and Proposition \ref{construct trans diffeo 2}, for any $\vartheta > 0$, there exist an open set $\Omega_0 \subset [0,1]^{J}$ with $\mathrm{Leb}([0,1]^{J} \backslash \Omega_0) < \vartheta$, an open neighbourhood $U_1$  of the origin in $\R^{I}$, and a $C^r$ map $\hat{f} \colon [0,1]^{J} \times U_1 \times X \to X$ with $f_a=\hat f(a,0,\cdot)$ for all $a \in [0,1]^{J}$, 
such that 
for all sufficiently small $\epsilon > 0$, and any  $\delta > 0$, there exists $\mathcal{E}  \subset \Omega_0 \times U_1$ such that  $\mathrm{Leb}(\mathcal{E}) < \delta$ and $\hat{f}(a,b,\cdot)$ is  $C^1$-stably accessible for all $(a,b) \in (\Omega_0 \times (U_1 \cap B(0,\epsilon))) \backslash \mathcal{E}$. Now, given any sufficiently small $\epsilon > 0$, let $\delta \in (0, \epsilon^{I}\vartheta)$ and $\mathcal{E}=\mathcal{E}(\epsilon,\delta)$ be as above, and for any $b \in U_1$, 
set 
$\mathcal{E}^b:=\mathcal{E}\cap (\Omega_0 \times \{b\})$. Then, by Fubini's theorem, 
there exists $b \in U_1 \cap B(0,\epsilon)$ such that
\begin{equation}\label{bad parameters estimate size}
\mathrm{Leb}\big((([0,1]^{J}\backslash \Omega_0) \times \{b\}) \cup\mathcal{E}^b\big) \lesssim \mathrm{Leb}([0,1]^{J} \backslash \Omega_0) + \epsilon^{-I}\mathrm{Leb}(\mathcal{E}) \lesssim \vartheta.
\end{equation}
For any  integer $n \geq 1$, we consider the following collection of $C^{r}-J-$families in $\mathcal{U}$: 
\begin{gather*}
\cF_{n}:= \{ \{f_a\}_{a \in [0,1]^{J}} \in C^{r}([0,1]^{J}, \mathcal{U})\ \vert\ \mbox{the set of $a \in [0,1]^{J}$ such that  }\\  \mbox{ $f_a$ is not $C^1$-stably accessible has measure less than $\frac{1}{n}$}\}.
\end{gather*}
It follows from $J \geq J_0$,  Proposition \ref{propapproximationbyregularfamily} and \eqref{bad parameters estimate size} above that for any $n \geq 1$, the set $ \cF_{n}$ is $C^1$-open and $C^{r}$-dense in the space $C^{r}([0,1]^{J}, \cU)$. In particular,  $\cG :=  \bigcap_{n \geq 1}  \cF_{n}$
is residual. By definition, for any $\{f_a\}_{a \in [0,1]^{J}} \in \cG$, the set of $a\in [0,1]^{J}$ such that $f_a$ is not $C^1$-stably accessible has  measure zero. This concludes the proof.
\end{proof}

\appendix

\section{} \label{app a}

\begin{proof}[Proof of Lemma \ref{prop holon map and nu}]
	
	We detail the case where $*=u$. The other one is handled similarly. 
	
	Proof of \eqref{lemma first coordinate}: for any $b \in U$, $x \in X$, we have $\cWu_{T}(b,x) \subset \{b\} \times X$. Hence the image of $H^{u}_{T, (0,x), (0,y)}$ is contained in $\{b\} \times X$.
	Then for any $B \in T_0U$, we have $DH^{u}_{T, (0,x),(0,y)}(B + \nu_0 (x,B)) \in B + T_zX$, while $\pi_b(B + T_zX) = B + \nu_0(z,B)$. 
	
	To show \eqref{lemma cent coordinate}, we need the following lemma.
	\begin{lemma}[\textit{A priori} estimates]\label{prop a priori est}
		There exists a $C^1$-uniform constant $\tilde \sigma_f>0$, and a $C^2$-uniform constant  $C_\star=C_\star(f) > 0$  
		such that the following is true. Take any $x \in X$, $w\in \cWcu_f(x,\tilde \sigma_f/2)$. Then $z:= H^{u}_{f, x, w}(x)$ is well-defined, and for any $B \in T_0U$, we have
		\begin{align} 
		&\norm{\pi_cDH^{u}_{T, (0,x), (0,w)}(B + \nu_0(x,B))}  \nonumber \\
		\leq \ & C_\star (\norm{\nu_0(x,\cdot)} + \norm{\nu_0(z,\cdot)}  +  \norm{D^2T} d_{\cWu_{f}}(x,z)) \|B\|. \label{lab c T}
		\end{align}
		We have an analogous statement for any $x,w$ in a local center stable leaf.
	\end{lemma}
	
	\begin{proof}
	
	We follow the construction in \cite[Proof of Theorem 4.1]{PSW} and refer to \cite{PSW} for many details. 
	
	Let $\widetilde{E}^u$ (resp. $\widetilde{E}^{cs}$) be a smooth bundle of $TX$ that closely approximates $E^{u}_f$ (resp. $E^{cs}_f$)  and let $\delta = \delta(f)>0$ be a small $C^1$-uniform constant, to be chosen in due course. Following the proof of \cite[Theorem 4.1, page 530]{PSW}, we embed  $\widetilde{E}^u$, $\widetilde{E}^{cs}$ via $C^\infty$ maps  $\imath_1\colon\widetilde{E}^{u}\to   X\times \R^{m_1}$ and $\imath_2\colon\widetilde{E}^{cs}\to  X\times \R^{m_2}$, where for $i=1,2$, $m_i \in \mathbb{N}$, and $\R^{m_i}$ is equipped with a metric $\|\cdot\|_i$ such 
	that the Lipschitz constant of $\imath_i$ is uniformly bounded by some $C^1$-uniform constant $c_0=c_0(f)>0$. 
	As in \cite[Proof of Theorem 4.1]{PSW}, 
	we can choose $\widetilde{E}^u,\widetilde{E}^{cs},\imath_1,\imath_2,\|\cdot\|_1,\|\cdot\|_2,U,\delta$ so that
	there is a $C^1$ bundle contraction $F_\#$ satisfying
	\begin{center}
		\begin{tikzcd}
		U \times X \times Y \arrow{r}{F_\#} \arrow[swap]{d}{} & U \times X \times Y \arrow{d}{} \\
		U \times X \arrow{r}{T}& U \times X
		\end{tikzcd}
	\end{center}
	Here $Y$ is defined as
	\begin{equation*}
	Y := \{ g \in C^0(\R^{m_1}(2c_0\delta),\R^{m_2}) \mid g(0) = 0,\, \mathrm{Lip}(g) \leq 1 \}
	\end{equation*}
	equipped with the norm
	\begin{equation*}
	\norm{g} :=  \sup_{x}\frac{\norm{g(x)}_2}{\norm{x}_1}, \quad \forall\, g \in Y.
	\end{equation*}
	Recall that by \cite[(11)]{PSW}, we have
	\begin{equation*}
	\norm{(F_\#)_pg - (F_\#)_ph} \leq C e^{-\bar\chi^u(p) + \hat \chi^c(p)} \norm{g - h}, \quad \forall p \in X.
	\end{equation*}
	Moreover, we have:
	\enmt
	\item
	the unique invariant section of $F_\#$  is  a family of Lipschitz functions $\{\gamma_p \colon \mathbb{R}^{m_1}(2c_0 \delta) \to \mathbb{R}^{m_2} \}_{p \in U\times X}$ parametrizing local unstable manifolds: 
	for any $p = (b,x) \in U \times X$, we have
	\begin{gather*}
	\gamma_p(\imath_1(\widetilde{E}^u(x, 2 c_0 \delta))) \subset \imath_2(\widetilde{E}^{cs}(x)), \\ \text{and}\
	\cWu_{\hat f(b,\cdot)}(x,\delta/c')\subset  \exp_x(Graph(\imath_2^{-1}\gamma_p\imath_1|_{\widetilde{E}^{u}(x,\delta)})) \subset \cWu_{\hat f(b,\cdot)}(x,c'\delta),
	\end{gather*}
	for some $C^1$-uniform constant $c'=c'(f)>0$;
	\item $F_\#$ preserves the sub-bundle $Y_0$ where
	\begin{equation*}
	Y_0(p) := \{ g \in Y \mid 
	g(\imath_1(\widetilde{E}^u(x, 2 c_0 \delta))) \subset \imath_2(\widetilde{E}^{cs}(x)) \}, \quad \forall\, p = (b,x) \in U \times X,
	\end{equation*}
	and satisfies that 	for some  $C^2$-uniform constant $c_*=c_*(f) > 0$,  for any sufficiently close  points $p,q$ in the same center-unstable leaf of $T$, for any $g \in Y$, for any $z \in \mathbb{R}^{m_1}(2c_0\delta)$,\footnote{See \cite[Page 533]{PSW}.}
	\begin{equation}\label{eq contraction jet} 
	\|(F_{\#})_p(g)(z) - (F_{\#})_q(g)(z)\|_2 \leq c_* \norm{D^2T} d(p,q)\|z\|_1.
	\end{equation}
	\eenmt
	
	\begin{lemma}
		For any sufficiently close $p,q \in U \times X$  in a local center-unstable manifold of $T$,
		\begin{equation} \label{gamma p 1 z}
		\|\gamma_{p}(z) - \gamma_{q}(z)\|_2 \leq c_* \norm{D^2T} d(p,q)\|z\|_1,\quad \forall\, z \in B(0,2c_0\delta)\subset\mathbb{R}^{m_1}.
		\end{equation}
	\end{lemma}
	\begin{proof}
		The proof follows \cite[Proof of Theorem 3.2]{HPS}.
		For each $p \in U \times X$, we let $\widehat \cJ_p$ be the set of $C^0$ sections $g\colon V_{p,g} \to Y$ -- where $V_{p,g}$ is an open neighbourhood of $p$ -- such that
		\begin{equation*}
		g(p) = \gamma_p, \quad
		\limsup_{\cWcu_T(p) \ni  q \to p} \frac{ \norm{g(q) - g(p)} }{d(q,p)} < \infty.
		\end{equation*}
		Define a semi-norm on $\widehat \cJ_p$ by
		\begin{equation*}
		d(g, h) := 
		\limsup_{\cWcu_T(p) \ni  q \to p} \frac{ \norm{g(q) - h(q)} }{d(q,p)},\qquad \forall\, g,h \in \widehat \cJ_p.
		\end{equation*}
		We declare that $g, h \in \widehat \cJ_p$ are equivalent if $d(g,h) = 0$.
		Then the semi-norm $d$ descends to a distance on the equivalence classes of $\widehat \cJ_p$, denoted by $\cJ_p$. Notice that $\cJ_p$ is the space of {\it Lipschitz jets}, defined in \cite[Chapter 3]{HPS},  at $p$.
		
		We define a Banach bundle $\cJ$ over $U \times X$ (equipped with the discrete topology) by setting 
		\begin{equation*}
		\cJ := \bigcup_{p \in U \times X}\cJ_p.
		\end{equation*}
		The map $F_\#$ gives rise to a bundle map $J$ of $\cJ$. More precisely, for any $g\colon V_{T^{-1}(p),g} \to Y$ in $\widehat \cJ_{T^{-1}(p)}$, we define $Jg \in \widehat \cJ_p$ by
		\begin{equation*}
		Jg(q) := (F_\#)_{T^{-1}(q)}(g(T^{-1}(q))), \quad \forall\, q \in U \times X \cap T(V_{T^{-1}(p), g}). 
		\end{equation*} 
		It is clear that $J$ descends to a map from  $\cJ_{T^{-1}(p)}$ to $\cJ_p$. Consequently, $J$ is a bundle map of $\cJ$ over $T$.
		For any $g, h \in \widehat \cJ_{T^{-1}(p)}$, we have
		\begin{align*}
		{d}(Jg,Jh) &= \limsup_{\cWcu_T(p) \ni q \to p} \frac{\norm{Jg(q) - Jh(q)}}{d(p,q)}  \\
		&=\limsup_{\cWcu_T(p) \ni q \to p} \frac{\norm{ (F_\#)_{T^{-1}(q)}(g(T^{-1}(q))) -  (F_\#)_{T^{-1}(q)}(h(T^{-1}(q)))}}{d(p,q)} \\
		&=\limsup_{\cWcu_T(T^{-1}(p)) \ni  u \to T^{-1}(p)} \frac{\norm{ (F_\#)_{u}(g(u)) -  (F_\#)_{u}(h(u))}}{\norm{g(u)-h(u)}}   \\
		& \quad \cdot \limsup_{\cWcu_T(p) \ni  q \to p} \frac{d(T^{-1}(p), T^{-1}(q))}{d(p,q)} \cdot \limsup_{\cWcu_T(p) \ni u \to T^{-1}(p)} \frac{\norm{g(u)-h(u)}}{d(T^{-1}(p),u)}  \\
		&\leq \hat c e^{-\bar\chi^u(T^{-1}(p)) + \hat \chi^c(T^{-1}(p)) -\bar\chi^c(T^{-1}(p))} {d}(g,h),
		\end{align*}
		for some constant $\hat c>0$, as the points $p,q$ above are in the same center-unstable manifold.  Moreover, by center bunching, $-\bar\chi^u + \hat \chi^c -\bar\chi^c<0$, thus $\mathcal{J}$ has a unique $J$-invariant section $h_0$. On the other hand, as $\gamma$ is $F_\#$-invariant, it is clear that $\gamma$ must be a representative of $h_0$.
		
		We say that $\bar g \in \cJ_p$ is the constant section if $\bar g$ admits a representative  $g \in \widehat \cJ_p$ such that
		\begin{equation*}
		g(q) = \gamma_p, \quad \forall\, q \in V_{p, g}.
		\end{equation*} 
		We denote by $h_*\colon U \times X \to \cJ$ the section where $h(p)$ is the constant section in $\cJ_p$.
		By \eqref{eq contraction jet}, the space of sections $h\colon U \times X \to \cJ$ that is within distance $c_* \norm{D^2T}$ to $h_*$ is invariant under $J$ if $c_*$ is sufficiently large. Consequently, $d(h_0, h_*) \leq c_* \norm{D^2T}$.
	\end{proof}
	
	Let  $x \in X$ and $p = (0,x)$.  In a small neighbourhood of $x$, we can define a $C^\infty$  coordinate chart $\tau_p\colon (-1,1)^{ d} \to X$ with $\tau_p(0)=x$ and the following properties:
	\enmt
	
	\item $D\tau_p(0,\cdot)$ maps $\R^{d_u} \times \{0\}$ (resp. $\{0\} \times \R^{c + d_s }$) to $E^{u}_{f}(x)$ (resp. $E^{cs}_{f}(x)$);
	
	\item $\widetilde{E}^{u}$ is close to the tangent space of $\tau_p((-1,1)^{d_u} \times \{z_{cs}\})$, $\forall\, z_{cs} \in (-1,1)^{c+d_s}$;
	
	\item $\widetilde{E}^{cs}$ is close to the tangent space of $\tau_p(\{z_u\} \times (-1,1)^{c + d_s})$, $\forall\, z_u \in (-1,1)^{d_u}$.
	
	\eenmt 
	The required closeness  in (2),(3) will be made evident from the proof. 
	
	Such chart $\tau_p$ is obtained by first choosing a $C^\infty$ chart $\tilde \tau_p$ satisfying $\tilde \tau_p(0)=x$ and (1), and then considering the restriction of $\tilde \tau_p$ to a sufficiently small neighbourhood of $0$. We can also choose $\tau_p$ to depend continuously on $p$, with $\|\tau_p\|_{C^1}$, $\|\tau_p^{-1}\|_{C^1}$ bounded by a $C^1$-uniform constant $c''=c''(f)>0$. 
	
	
	In the following, we fix $p=(0,x)$ and denote   
	$\tau=\tau_p$.
	We will not distinguish a point $z\in \tau((-1,1)^d)$ and its coordinate under $\tau^{-1}$ e.g. we denote $p=(0,0)$. Besides, we identify a tangent vector  $v \in T_z X$ with its  preimage $D\tau^{-1}(v)$, whenever it is defined. Without loss of generality, we assume that $\delta$ is small compared to the size of 
	$\tau((-1,1)^d)$.

	Let $p' = (0,x') \in \cWu_{T}(p)$ be a point sufficiently close to $p$ such that there exists $w_0 \in \imath_1 (\widetilde{E}^{u}(p, \delta/2)) \subset \R^{m_1}$ satisfying
	\begin{equation} \label{def x'}
	x' = \exp_{x}(\imath_1^{-1}(w_0) +  \imath_2^{-1}\gamma_p(w_0)).
	\end{equation}
	
	Fix any $B \in T_0 U$. Recall that $B+\nu_0(x,B)\in E_T^c(p)$. Then let $t > 0$ be any sufficiently small constant, and define $q=q(t)\in \cW_T^c(p,\delta)$ by 
	\begin{equation} \label{y = t nu b}
	q:=(tB,y)=p+t(B,\nu_0(x,B))+o(t),\qquad y:=t \nu_0(x,B)+o(t),
	\end{equation}
	where $o(t)$ denotes a vector of modulus sublinear in $t$. Here \eqref{def x'} is interpreted as follows: $D\tau^{-1}(t(B,\nu_0(x,B)))$ is a vector in $T_0(-1,1)^d$, and adds up to $\tau^{-1}(p)$ using the isomorphism $T_0(-1,1)^d\simeq \R^d$. Several other expressions in this proof shall be understood in the same way. Since $\widetilde{E}^u,\widetilde{E}^{cs}$ are $C^1$ embedded into $X \times \R^{m_1}$, $X \times \R^{m_2}$ respectively, then for sufficiently small $t$, there exists $w_2 \in \imath_1(\widetilde{E}^u(y,\delta))$ such that
	$$
	\|w_2-w_0\|_2 < c_1(\|y\|+t\|B\|)\|w_0\|_1
	$$
	for some $C^2$-uniform constant $c_1=c_1(f)>0$.
	
	We define a point in $\cW_{\hat f(t b,\cdot)}^u(y,c'\delta)$ by
	$$
	y'':=\exp_y (\imath_1^{-1}(w_2)+\imath_2^{-1}\gamma_q (w_2)).
	$$
	Recall that $TX=\widetilde{E}^{u}\oplus  \widetilde{E}^{cs}$ is  $C^{\infty}$  embedded into $X \times \R^{m_1+m_2}$, and $\exp\colon TX \to X$ is a $C^\infty$ map. Then, by \eqref{gamma p 1 z}, \eqref{y = t nu b}, and since $\mathrm{Lip}(\imath_1),\mathrm{Lip}(\imath_2)\leq c_0$, we deduce that
	\begin{align}\label{dist x' y''}
	\|x' - y''\| &<  c_2 (\|y\| +\|w_0-w_2\|_1+ \|\gamma_p(w_0) - \gamma_q(w_0) \|_2)\nonumber \\
	&< c_3 (t\|\nu_0(x,B)\| + t \norm{D^2T} (\|B\|+\|\nu_0(x,B)\|)  \|w_0\|_1),
	\end{align}
	for  $C^2$-uniform constants $c_2=c_2(f),c_3=c_3(f)>0$.
	
	Let $q':= H^{u}_{T, p, p'}(q)$. By definition, $\{q'\}= \cWcs_T(p') \cap \cWu_T(q)$. Since $\cWu_T(q)=\{tB\}\times \cW_{\hat f(tB,\cdot)}^{u}(y)$, we have $q' = (tB, y')$ for some $y'\in \cW_{\hat f(tB,\cdot)}^{u}(y,2c'\delta)$. 
	On $U \times \tau((-1,1)^d)$, $\cWcs_{T}(p')$ is closely approximated by $E^{cs}_{T}(p') =  Graph(\nu_0(x',\cdot)) \oplus E^{cs}_{f}(x')$. 
	Thus $y'=x'+ t\nu_0(x',B)+ v^{cs}(t)+o (t)$ for some $v^{cs}(t) \in E^{cs}_{f}(x')$. Hence
	\begin{equation}\label{equation y' y''}
	(y'-y'')-v^{cs}(t)= t\nu_0(x',B)+x'-y''+o (t). 
	\end{equation}
	Since $y',y''\in \cW_{\hat f(tB,\cdot)}^{u}(y)$, we also know that $y'-y''$ is close to $E^{u}_{\hat{f}(t B,\cdot)}(y'')$. By conditions (2),(3) in the choice of $\tau_p$,  the angle between $E^{u}_{\hat{f}(tb,\cdot)}(y'')$ and $E^{cs}_{f}(x')$ is uniformly bounded from below. 
	By \eqref{equation y' y''},  
	for some $C^2$-uniform constant $c_4=c_4(f)>0$, it holds  
	\begin{equation}\label{equation y' y'' 2}
	\|v^{cs}(t)\|,\|y'-y''\|\leq c_4 ( t\|\nu_0(x',B)\| +\|x'-y''\|).
	\end{equation}
	
	Combining estimates \eqref{dist x' y''} and \eqref{equation y' y'' 2}, and since $\|\pi_X p' - \pi_X q'\|\leq \|x'-y''\|+\|y'-y''\|$, we deduce that for some $C^2$-uniform constant 
	$c_5=c_5(f) > 0$, 
	\begin{equation*}
	\|\pi_X p' - \pi_X q'\| 
	\leq c_5 (\|\nu_0(x,\cdot)\|+\|\nu_0(x',\cdot)\| +\norm{D^2T} d_{\cW_{\hat f(b,\cdot)}^u}(x,x'))  t\|B\|.
	\end{equation*}
	We then conclude our proof by noting that 
	\begin{equation*}
	\|\pi_cDH^{u}_{T, p, p'}(B + \nu_0(x,B))\| = \lim\limits_{t \to 0} \frac{\|\pi_X p'-\pi_Xq'\|}{t}.
	\end{equation*}
	\end{proof}
	We now continue with the proof of \eqref{lemma cent coordinate}.  
	Without loss of generality, we may assume that $\sigma_f \in (0, \tilde\sigma_f)$.
	By Lemma \ref{prop a priori est} and Lemma \ref{lem property of nu}\eqref{lem apriori-refined est for nu}, there exist $C^2$, resp. $C^1$-uniform constants $c_2=c_2(f) > 0$, resp. $c_1=c_1(f) > 0$, such that  (recall \eqref{first line})
	\begin{gather*}
	\norm{\pi_cDH^u_{T,(0,x),(0,y)}(B + \nu_0(x,B))}\leq c_2 (\norm{\nu_0(x,\cdot)} + \norm{\nu_0(z,\cdot)} + \norm{D^2T} d_{\cWu_f}(x,z))\norm{B} \\
	\leq c_2c_1 (\max(e^{- \kappa(\hat f,x)}, e^{- \kappa(\hat f,z)}) \norm{T}_{C^1} + \norm{D^2T}d_{\cWu_f}(x,z))\norm{B}.
	\end{gather*}
\end{proof}

\section{} \label{app b}
\begin{proof}[Proof of Lemma \ref{lem compare T V}]
Let $V\colon \mathbb{R}^I \times X \to TX$ be a $C^r$ vector field as in Definition \ref{def infini deform}, and let $U \subset \mathbb{R}^I$ be a small neighbourhood of the origin. We let $\mathscr{F}\colon  \R \times U \times X \to X$ be the associate flow; it is defined by the following equation:
\begin{equation} \label{app eq 1}
\partial_t\mathscr{F}(t,b,x) = V(b, \mathscr{F}(t,b,x)),
\end{equation}
with initial condition $\mathscr{F}(0, b,x) = f(x)$.
For any $(s,b,x) \in \R \times U \times X$, we have
\begin{equation*}
\begin{array}{ll}
\bullet\ \partial_b \mathscr{F}(0,b,x) =0, &\bullet\ \partial_b^2 \mathscr{F}(0,b,x) = \partial_{b}\partial_x \mathscr{F}(0,b,x) = 0, \\
\bullet\ \partial_x \mathscr{F}(0,b,x) = \partial_x f(x),  &\bullet\ \partial_{x}^2\mathscr{F}(0,b,x) = \partial_x^2f(x), \\
\bullet\ \hat{f}(b,x) = \mathscr{F}(1,b,x),  &\bullet\ f(x) = \mathscr{F}(s,0,x).
\end{array}
\end{equation*}
By differentiating \eqref{app eq 1}, we obtain the following equations:
\begin{align*}
\partial_t \partial_x \mathscr{F} &= \partial_xV \partial_x\mathscr{F}, \quad
\partial_t \partial_b \mathscr{F} = \partial_bV + \partial_xV \partial_b \mathscr{F}, \\
\partial_t \partial_x^2 \mathscr{F} &= \partial_x^2V(\partial_x\mathscr{F}, \partial_x\mathscr{F}) + \partial_xV \partial_x^2\mathscr{F}, \\
\partial_t \partial_b\partial_x \mathscr{F} &= \partial_b\partial_xV (\partial_b,\partial_x\mathscr{F}) + \partial_x^2V (\partial_x \mathscr{F}, \partial_b\mathscr{F})+ \partial_xV \partial_b\partial_x\mathscr{F}, \\
\partial_t \partial_b^2 \mathscr{F} &= \partial_b^2V + 2 \partial_b\partial_xV(\partial_b, \partial_b\mathscr{F}) +  \partial_x^2V (\partial_b\mathscr{F}, \partial_b\mathscr{F}) + \partial_xV \partial_b^2\mathscr{F}.
\end{align*}

In particular, for all $t\in (0,1)$, $x \in X$ and $B \in T_0 U$, we have 
$$
\partial_t \partial_b \mathscr{F}((t,0,x),B)=\partial_b V((0,\mathscr{F}(t,0,x)),B)+\partial_x V((0,\mathscr{F}(t,0,x)),\partial_b \mathscr{F}((t,0,x),B)).
$$
In the above equality, the first term on the RHS equals $V(B,\mathscr{F}(t,0,x))=V(B,f(x))$; and the second term on the RHS equals $0$. Thus
$$
\pi_X DT((0,x),B)=\partial_b \hat f((0,x),B)=\partial_b \mathscr{F}((1,0,x),B)=V(B,f(x)).
$$
This concludes the proof of (2). 

By a slight abuse of notations, we use $\norm{\cdot}$ to denote the uniform norm for:

(a) derivatives of $f$, $\partial_bV$ and $\partial_b\partial_xV$ as functions on $X$; (b) derivatives of $\hat{f}$ and $V$ as functions on $U \times X$; (c) derivatives of $\mathscr{F}$ as functions on $[0,1] \times U \times X$.

To prove (1), we need to bound  the norms of $\|D\hat f\|$ abd $\| D^2 f\|$.
Since $B \mapsto V(B,\cdot)$ is linear,  it is clear that by reducing the size of $U$, we can assume that 
$\norm{\partial_xV} < \frac{1}{10}$.
Then by Gr\"onwall's inequality and possibly reducing the size of $U$, there exists an absolute constant $c_0 > 0$ such that
\begin{equation}\label{eq 10.3}
\norm{\partial_x\mathscr{F}}< c_0 \max(1, \norm{\partial_xf}), \quad
\norm{\partial_b\mathscr{F}}< c_0  \norm{\partial_bV}. 
\end{equation}
Since   $\partial_b^2V \equiv 0$,   by  Gr\"onwall's inequality and \eqref{eq 10.3}, there exists an absolute constant $c_1 > 0$ such that
\begin{align*}
\norm{\partial_x^2\mathscr{F}} &\leq \norm{\partial_{x}^2f} + c_0 \norm{\partial_x^2V} \norm{\partial_x\mathscr{F}}^2 \leq  \norm{\partial_x^2f} + c_1 \norm{\partial_x^2V} \max(1, \norm{\partial_xf}^2), \\
\norm{\partial_b\partial_x\mathscr{F}} &\leq c_0 (\norm{\partial_b\partial_xV} \norm{\partial_x\mathscr{F}}  + \norm{\partial_x^2V} \norm{\partial_x \mathscr{F}} \norm{\partial_b\mathscr{F}})\\
&\leq  c_1(\norm{\partial_b\partial_xV}  + \norm{\partial_x^2V} \norm{\partial_bV}) \max(1, \norm{\partial_xf}),\\
\norm{\partial_b^2\mathscr{F}}&\leq 2 c_0 (\norm{\partial_b\partial_xV} \norm{\partial_b\mathscr{F}} + \norm{\partial_x^2V} \norm{\partial_b\mathscr{F}}^2)\leq c_1 (\norm{\partial_b\partial_xV} \norm{\partial_bV} + \norm{\partial_x^2V} \norm{\partial_bV}^2).
 \end{align*}
Note that by possibly reducing the size of $U$, we can ensure that 
$$
\norm{\partial_x^2V} < \min(\max(1, \norm{\partial_xf}^2)^{-1},  \norm{\partial_bV}^{-2}, \norm{\partial_bV}^{-1}).
$$
Thus there exists an absolute constant $c_2 > 0$ such that
\begin{equation*}
\norm{\hat{f}}_{C^2} \leq \norm{\mathscr{F}(1,\cdot)}_{C^2} < c_2 (1 + \norm{\partial_b\partial_xV})(1 + \norm{f}_{C^2} + \norm{\partial_bV}).
\end{equation*}
We conclude the proof of (1) by noticing that $\norm{D^i T} \lesssim \norm{D^i\hat{f}}$ for $i=1,2$.
\end{proof}

\section{} \label{app c}
\begin{proof}[Proof of Theorem \ref{ epsilon lm sv}]

We repeat the proof of \cite[Proposition 3.2]{BK} for $\epsilon$-light maps, instead of light maps, for some $\epsilon=\epsilon(n)$. 

Recall that the order of a cover $\mathcal{O}=\{O_k\}_{k\in K}$ is the supremum of all numbers $\#K'$ such that $\cap_{k\in K'} O_k \neq \emptyset$.
Let then $\mathcal{V}_0$ be a cover of $X:=[0,1]^n$ such that $\mathcal{V}_0$ does not admit an open refinement of order less than or equal to $n$. Take $\delta>0$ a Lebesgue number of the cover $\mathcal{V}_0$ and define $\epsilon:=\delta/2$.

Assume that $f\colon X \to Y$ is $\epsilon$-light. Let $T$ be some triangulation of $Y$ and denote by $\mathcal{U}=\{U_i\}_{i \in I}$ its open star cover. For every $i \in I$, $f^{-1}(U_i)$ can be written as a disjoint union of connected open sets; they form an open cover of $X$, denoted by $\mathcal{V}=\{V_j\}_{j \in J}$. For each $j \in J$, we let $\alpha(j)\in I$ be such that $V_j \subset f^{-1}(U_{\alpha(j)})$. By assumption, $f$ is $\epsilon$-light, hence we can choose $T$ fine enough such that each $V_j$ has diameter smaller than $2 \epsilon=\delta$. Therefore $\mathcal{V}$ is an open refinement of $\mathcal{V}_0$; in particular, any open refinement of $\mathcal{V}$ has order at least $n+1$.

We define $\mathrm{Ner}(\mathcal{U})$ as the collection of subsets $I' \subset I$ such that $\cap_{i \in I'} U_i \neq \emptyset$. We define $\mathrm{Ner}(\mathcal{V})$ in a similar way. Both $\mathrm{Ner}(\mathcal{U})$ and $\mathrm{Ner}(\mathcal{V})$ are simplicial $n$-complexes, and we identify them with their geometric realization. 

Given a partition of unity $\{\rho_i\}$ subordinate to $\mathcal{U}$, we get a homeomorphism $\rho \colon Y\to \mathrm{Ner}(\mathcal{U})$,   
while $\alpha$ induces a local homeomorphism $\phi\colon \mathrm{Ner}(\mathcal{V})\to \mathrm{Ner}(\mathcal{U})$. 
Then, the functions $\nu_{j}:=\chi_{V_{j}} \cdot (\rho_{\alpha(j)} \circ \phi)$, $j \in J$, define a partition of unity subordinate to $\mathcal{V}$. We let $\nu\colon X \to \mathrm{Ner}(\mathcal{V})$ be the associate map, where $\nu(x)$ has barycentric coordinates $(\nu_{j}(x))_j$.  
 By construction, the following diagram commutes:

$$
\xymatrix{
X \ar[r]^f \ar[d]_\nu & Y \ar[d]^\rho\\
 \mathrm{Ner}(\mathcal{V}) \ar[r]_\phi &  \mathrm{Ner}(\mathcal{U})
}
$$
Let us see that some $n$-simplex $\sigma$ of $ \mathrm{Ner}(\mathcal{V})$ has an interior point $\xi$ which is a stable value of $\nu\colon X \to  \mathrm{Ner}(\mathcal{V})$. 
Assume it is not the case; then we may form a set $\mathcal S$ by choosing one interior point from each $n$-simplex of $ \mathrm{Ner}(\mathcal{V})$ and perturb $\nu$ on a small neighbourhood of $\nu^{-1}(\mathcal S)$ to get $\nu'\colon X \to  \mathrm{Ner}(\mathcal{V})\backslash \mathcal S$. Denote by $p\colon\mathrm{Ner}(\mathcal{V})\backslash \mathcal S\to[ \mathrm{Ner}(\mathcal{V})]_{n-1}$ the radial projection to the $(n-1)$-skeleton of $\mathrm{Ner}(\mathcal{V})$, s.t. the barycentric coordinates of $\nu'':=p \circ \nu'\colon X \to [ \mathrm{Ner}(\mathcal{V})]_{n-1}$ are subordinate to $\mathcal{V}$. Pulling back the open star cover of $\mathrm{Ner}(\mathcal{V})$ by $\nu''$, we get a refinement of $\mathcal{V}$ of order at most $n$, a contradiction. Thus, $\rho^{-1}(\phi (\xi))$ is a stable value of $f$, which concludes.
\end{proof}

\end{document}